\newtheorem{theorem}{Theorem}
\newtheorem{corollary}[theorem]{Corollary}
\newtheorem{lemma}[theorem]{Lemma}
\newtheorem{proposition}[theorem]{Proposition}
\newtheorem{remark}[theorem]{Remark}
\newenvironment{proof}[1][Proof]{\noindent\textbf{#1.} }{\ \rule{0.5em}{0.5em}}
\begin{document}

\title{Construction of KdV flow I\\
{\large -Tau-function via Weyl function-}}
\author{Shinichi Kotani}
\date{}
\maketitle

\begin{abstract}
Sato introduced the tau-function to describe solutions to a wide class of
completely integrable differential equations. Later Segal-Wilson represented
it in terms of the relevant integral operators on Hardy space of the unit
disc. This paper gives another representation of the tau-functions by the
Weyl functions for 1d Schr\"{o}dinger operators with real valued potentials,
which will make it possible to extend the class of initial data for the KdV
equation to more general one.
\end{abstract}

\section{Introduction\protect\footnotetext[1]{%
2010 \textit{Mathematics Subject Classification} \ Primary 35Q53, 37K10 \
Secondary 35B15}}

The discovery of spectral invariants by \cite{g} was a trigger of the
succeeding rapid development of the study of the KdV equation%
\begin{equation*}
\partial_{t}f=6f\partial_{x}f-\partial_{x}^{3}f\text{.}
\end{equation*}
Since then, most of the works have been done by using the scattering data
for decaying solutions, and the discriminant for periodic solutions. On the
other hand, the algebraic structure of the KdV equation was revealed by \cite%
{s} based on the results by \cite{l}, \cite{h}, and provided a unified
approach to a wide class of integrable systems. Since his argument was
algebraic, so obtained solutions were rational, multi-solitons and
algebro-geometric ones, and all these solutions were described by
tau-functions. It has been a problem to what extent this method is effective
to obtain general solutions to the KdV equation such as solutions starting
from almost periodic functions. \cite{s-w} considered a kind of closure of
Sato's framework to obtain a certain class of transcendental solutions.
However, their solutions still remain in a meromorphic class on the entire
complex plane $\mathbb{C}$. It should be noted that \cite{m1} proposed an
algorithm to construct solutions to the KdV equation, although it seems that
his method also has difficulty to go beyond the class investigated by \cite%
{s-w}.

Since Sato's theory gives a unified way to solve the KdV equation at least
algebraically, there is some hope to exceed the already existing frameworks.
In the following the outline of Sato's method developed by \cite{s-w} is
described. Let $q$ be a function (for the present $q$ can be complex valued)
on $\mathbb{R}$. The basic assumption on $q$ is that the associated Schr\"{o}%
dinger equation%
\begin{equation}
-\partial_{x}^{2}f+qf=-z^{2}f  \label{1}
\end{equation}
has a Baker-Akhiezer function $f$ for all $z\in\mathbb{C}$ satisfying\ $%
\left\vert z\right\vert >R$ for some $R>0$, where $f$ is called a \emph{%
Baker-Akhiezer function} if $f$ has an expression%
\begin{equation*}
f\left( x,z\right) =e^{-xz}\left( 1+\sum_{n=1}^{\infty}a_{n}(x)z^{-n}\right)
\end{equation*}
converging on $\left\{ \left\vert z\right\vert >R\right\} $. This condition
on $q$ is equivalent to the reflectionless property on $\left(
R^{2},\infty\right) $ if the potential is real valued, which will be seen
later. For $r>R $ let $W$ be the closure of the linear span of $\left\{
f\left( x,z\right) \right\} _{x\in\mathbb{R}}$ in the Hilbert space $%
H=L^{2}\left( \left\vert z\right\vert =r\right) $ ($\mathbb{D}_{r}=\left\{
z\in \mathbb{C}\text{; }\left\vert z\right\vert <r\right\} $, $\partial 
\mathbb{D}_{r}=\left\{ z\in\mathbb{C}\text{; }\left\vert z\right\vert
=r\right\} $). From (\ref{1}) we have easily\medskip\newline
(W.1) \ If $f\in W$, then $z^{2}f\in W$.\medskip\newline
Let $H_{+}=$\textrm{span}$\left\{ z^{n}\text{; }n=0,1,2,\cdots\right\} $, $%
H_{-}=$\textrm{span}$\left\{ z^{n}\text{; }n=-1,-2,\cdots\right\} $ in $H$,
and $\mathfrak{p}_{\pm}$ be the orthogonal projections to $H_{\pm}$
respectively. Define the second property by\medskip\newline
(W.2) \ $\mathfrak{p}_{+}$: $W\rightarrow H_{+}$ is bijective.\medskip%
\newline
The totality of closed subspaces of $H$ satisfying (W.1), (W.2) is denoted
by $Gr^{\left( 2\right) }$, which will be replaced by $Gr^{\left( 2\right)
}\left( \mathbb{D}_{r}\right) $ if it is necessary. In a general setting of $%
Gr^{\left( 2\right) }$ the property (W.2) is replaced by the Fredholm
condition of $\mathfrak{p}_{+}$. Set%
\begin{equation*}
\mathit{\Gamma}=\left\{ g=e^{h}\text{; }h\text{ is holomorphic on }\left\{
\left\vert z\right\vert <r^{\prime}\right\} \text{ for some }r^{\prime
}>r\right\} \text{.}
\end{equation*}
Then, for $W\in Gr^{\left( 2\right) }$, $g\in\mathit{\Gamma}$ a new subspace 
$gW$ satisfies (W.1), however the property (W.2) is not always valid. For a
given $W\in Gr^{\left( 2\right) }$ let $\delta>0$ be such that $e^{xz}W\in
Gr^{\left( 2\right) }$ holds for any $x\in\mathbb{R}$ such that $\left\vert
x\right\vert <\delta$. To find such a $\delta$ is always possible because $%
e^{xz}W\in Gr^{\left( 2\right) }$ for $x=0$. Then, an adhoc derivation of
the potential $q$ and the Baker-Akhiezer function from $W $ is as follows.

The property (W.2) for $e^{xz}W$ implies that there exists a unique element $%
f$ of $W$ satisfying%
\begin{equation*}
e^{xz}f\left( z\right) \in1+H_{-}\text{,}
\end{equation*}
which is denoted by $f(x,z)$. Since any element of $H_{-}$ has Taylor
expansion at $z=\infty$, we have 
\begin{equation*}
u\left( x,z\right) \equiv e^{xz}f\left( x,z\right) -1=\frac{a_{1}\left(
x\right) }{z}+\frac{a_{2}\left( x\right) }{z^{2}}+\frac{a_{3}\left( x\right) 
}{z^{3}}+\cdots\text{.}
\end{equation*}
Taking derivative with respect to $x$ yields%
\begin{equation*}
\left\{ 
\begin{array}{l}
f^{\prime}\left( x,z\right) =-ze^{-xz}\left( 1+u\left( x,z\right) \right)
+e^{-xz}u^{\prime}(x,z)\smallskip \\ 
f^{\prime\prime}\left( x,z\right) =z^{2}e^{-xz}\left( 1+u\left( x,z\right)
\right) -2ze^{-xz}u^{\prime}(x,z)+e^{-xz}u^{\prime\prime}(x,z)%
\end{array}
,\right.
\end{equation*}
which shows%
\begin{align*}
& e^{xz}\left( -f^{\prime\prime}\left( x,z\right) -2a_{1}^{\prime}\left(
x\right) f\left( x,z\right) +z^{2}f\left( x,z\right) \right) \\
& =2zu^{\prime}(x,z)-u^{\prime\prime}(x,z)-2a_{1}^{\prime}\left( x\right)
\left( 1+u\left( x,z\right) \right) \\
& =\sum_{k=1}^{\infty}\left( 2a_{k+1}^{\prime}\left( x\right)
-a_{k}^{\prime\prime}\left( x\right) -2a_{1}^{\prime}\left( x\right)
a_{k}\left( x\right) \right) z^{-k}\text{.}
\end{align*}
Since the first term belongs to $e^{xz}W$ due to (W,1) and the last term is
an element of $H_{-}$, we have%
\begin{equation}
\left\{ 
\begin{array}{l}
-f^{\prime\prime}\left( x,z\right) -2a_{1}^{\prime}\left( x\right) f\left(
x,z\right) +z^{2}f\left( x,z\right) =0\smallskip \\ 
2a_{k+1}^{\prime}\left( x\right) -a_{k}^{\prime\prime}\left( x\right)
-2a_{1}^{\prime}\left( x\right) a_{k}\left( x\right) =0,\text{ \ }%
k=1,2,\cdots%
\end{array}
\right. \text{,}  \label{2}
\end{equation}
where we have used the property (W.2) again for the space $e^{xz}W$.
Therefore, if we define%
\begin{equation}
q_{W}(x)=-2a_{1}^{\prime}\left( x\right) \text{,}  \label{3}
\end{equation}
then $f\left( x,z\right) $ is the Baker-Akhiezer function for $q_{W}$.

A solution to the KdV equation starting from $q_{W}$ is obtained similarly.
Assume $e^{xz+tz^{3}}W\in Gr^{\left( 2\right) }$ for any $x,t\in\mathbb{R}$.
Let $f\left( x,t,z\right) $ be a unique element $f$ of $W$ satisfying

\begin{equation*}
e^{xz+tz^{3}}f\left( z\right) \in1+H_{-}\text{,}
\end{equation*}
and set%
\begin{equation*}
\left\{ 
\begin{array}{l}
e^{xz+tz^{3}}f\left( x,t,z\right) =1+u\left( x,t,z\right) \text{ \ \ with \ }%
u\left( x,t,z\right) \in H_{-} \\ 
u\left( x,t,z\right) =\dfrac{a_{1}\left( x,t\right) }{z}+\dfrac {a_{2}\left(
x,t\right) }{z^{2}}+\dfrac{a_{3}\left( x,t\right) }{z^{3}}+\cdots \\ 
q_{W}\left( x,t\right) =a_{1}^{\prime}\left( x,t\right)%
\end{array}
\text{,}\right.
\end{equation*}
where $^{\prime}$ denotes the derivative with respect to $x$. Taking
derivatives of $e^{xz+tz^{3}}f$ with respect to $t,x$ yields%
\begin{equation*}
\left\{ 
\begin{array}{l}
e^{xz+tz^{3}}\partial_{t}f=-z^{3}\left( 1+u\right) +\partial_{t}u \\ 
e^{xz+tz^{3}}f^{\prime}=-z\left( 1+u\right) +u^{\prime} \\ 
e^{xz+tz^{3}}f^{\prime\prime\prime}=-z^{3}\left( 1+u\right)
+3z^{2}u^{\prime}-3zu^{\prime\prime}+u^{\prime\prime\prime}%
\end{array}
\text{,}\right.
\end{equation*}
and hence we obtain%
\begin{equation*}
e^{xz+tz^{3}}\left( \partial_{t}f-f^{\prime\prime\prime}-3a_{1}^{\prime
}f^{\prime}\right) =\partial_{t}u-3z^{2}u^{\prime}+3zu^{\prime\prime
}-u^{\prime\prime\prime}+3za_{1}^{\prime}\left( 1+u\right)
-3u^{\prime}a_{1}^{\prime}\text{.}
\end{equation*}
Then, the right side function is%
\begin{equation*}
-3a_{2}^{\prime}+3a_{1}^{\prime\prime}+3a_{1}^{\prime}a_{1}\text{ \ \ modulo
\ }H_{-}\text{.}
\end{equation*}
Applying the second identities of (\ref{2}) to $e^{tz^{3}}f\left(
x,t,z\right) $ and $e^{tz^{3}}W$ yields%
\begin{equation*}
-3a_{2}^{\prime}+3a_{1}^{\prime\prime}+3a_{1}^{\prime}a_{1}=\dfrac{3}{2}%
\left( -a_{1}^{\prime\prime}-2a_{1}^{\prime}a_{1}+2a_{1}^{\prime\prime
}+2a_{1}^{\prime}a_{1}\right) =\dfrac{3}{2}a_{1}^{\prime\prime}\text{,}
\end{equation*}
hence%
\begin{align*}
& e^{xz+tz^{3}}\left( \partial_{t}f-f^{\prime\prime\prime}-3a_{1}^{\prime
}f^{\prime}-\dfrac{3}{2}a_{1}^{\prime\prime}f\right) \\
& =\partial_{t}u-3z^{2}u^{\prime}+3zu^{\prime\prime}-u^{\prime\prime\prime
}+3za_{1}^{\prime}\left( 1+u\right) -3u^{\prime}a_{1}^{\prime}-\dfrac{3}{2}%
a_{1}^{\prime\prime}\left( 1+u\right) \text{,}
\end{align*}
which is an element of $H_{-}$. Therefore, the property (W.2) for $%
e^{xz+tz^{3}}W$ implies%
\begin{equation*}
\partial_{t}f-f^{\prime\prime\prime}-3a_{1}^{\prime}f^{\prime}-\dfrac{3}{2}%
a_{1}^{\prime\prime}f=0\text{,}
\end{equation*}
thus%
\begin{equation*}
\partial_{t}u-3z^{2}u^{\prime}+3zu^{\prime\prime}-u^{\prime\prime\prime
}+3za_{1}^{\prime}\left( 1+u\right) -3u^{\prime}a_{1}^{\prime}-\dfrac{3}{2}%
a_{1}^{\prime\prime}\left( 1+u\right) =0\text{.}
\end{equation*}
The coefficient of $z^{-1}$ of the above left side function is%
\begin{equation*}
\partial_{t}a_{1}-3a_{3}^{\prime}+3a_{2}^{\prime\prime}-a_{1}^{\prime
\prime\prime}+3a_{1}^{\prime}a_{2}-3\left( a_{1}^{\prime}\right) ^{2}-\dfrac{%
3}{2}a_{1}^{\prime\prime}a_{1}=0\text{.}
\end{equation*}
The second identity of (\ref{2}) for $e^{tz^{3}}W$ yields%
\begin{equation*}
2a_{2}^{\prime}-a_{1}^{\prime\prime}-2a_{1}^{\prime}a_{1}=0,\text{ \ \ }%
2a_{3}^{\prime}-a_{2}^{\prime\prime}-2a_{1}^{\prime}a_{2}=0\text{,}
\end{equation*}
hence%
\begin{align*}
&
-3a_{3}^{\prime}+3a_{2}^{\prime\prime}-a_{1}^{\prime\prime\prime}+3a_{1}^{%
\prime}a_{2}-3\left( a_{1}^{\prime}\right) ^{2}-\dfrac{3}{2}%
a_{1}^{\prime\prime}a_{1} \\
& =-\dfrac{3}{2}\left( a_{2}^{\prime\prime}+2a_{1}^{\prime}a_{2}\right)
+3a_{2}^{\prime\prime}-a_{1}^{\prime\prime\prime}+3a_{1}^{\prime}a_{2}-3%
\left( a_{1}^{\prime}\right) ^{2}-\dfrac{3}{2}a_{1}^{\prime\prime }a_{1} \\
& =-\dfrac{1}{4}a_{1}^{\prime\prime\prime}-\dfrac{3}{2}\left( a_{1}^{\prime
}\right) ^{2}
\end{align*}
Consequently, we have%
\begin{equation*}
\partial_{t}a_{1}-\dfrac{1}{4}a_{1}^{\prime\prime\prime}-\dfrac{3}{2}\left(
a_{1}^{\prime}\right) ^{2}=0\text{.}
\end{equation*}
Taking derivative with respect to $x$ shows%
\begin{equation*}
-\dfrac{1}{2}\partial_{t}q_{W}+\dfrac{1}{8}q_{W}^{\prime\prime\prime}-\dfrac{%
3}{4}q_{W}q_{W}^{\prime}=0\text{ \ hence \ }\partial_{t}q_{W}-\dfrac{1}{4}%
q_{W}^{\prime\prime\prime}+\dfrac{3}{2}q_{W}q_{W}^{\prime }=0\text{,}
\end{equation*}
which shows that $q_{W}(x,-4t)$ satisfies the KdV equation with initial data 
$q_{W}(x)$. The idea behind this calculation is an effective use of algebra
of pseudo differential operators, and a more systematic argument can be
found in \cite{s-w}.

The tau-function was introduced by Sato to describe the $\mathit{\Gamma}$%
-action on $Gr^{(2)}$. For $W\in Gr^{\left( 2\right) }$ and $f_{+}\in H_{+}$
let $f_{-}$ be the unique element of $H_{-}$ such that%
\begin{equation*}
f_{+}+f_{-}\in W
\end{equation*}
is valid, which is possible due to (W.2). An operator $A_{W}$ from $H_{+}$
to $H_{-}$ is defined by $A_{W}f_{+}=f_{-}$. Then, for $g\in\mathit{\Gamma} $
the \emph{tau-function} is defined by%
\begin{equation}
\tau_{W}(g)=\det\left( I+g^{-1}\mathfrak{p}_{+}gA_{W}\right) \text{,}
\label{4}
\end{equation}
and the functions $q_{W}(x)$, $q_{W}(x,t)$ are given by the tau-functions in
Lemma\ref{l4} as follows:%
\begin{equation*}
q_{W}(x)=-2\partial_{x}^{2}\log\tau_{W}(e^{xz})\text{, \ \ }%
q_{W}(x,t)=-2\partial_{x}^{2}\log\tau_{W}(e^{xz+tz^{3}})\text{.}
\end{equation*}
If we use $e^{xz+tz^{2n+1}}$ in place of $e^{xz+tz^{3}}$, then we obtain
solutions to the higher order KdV equations. It is known that $gW\in
Gr^{\left( 2\right) }$ if and only if $\tau_{W}(g)\neq0$, and $\tau_{W}(g) $
has a cocycle property%
\begin{equation*}
\tau_{W}(g_{1}g_{2})=\tau_{W}(g_{1})\tau_{g_{1}W}(g_{2})
\end{equation*}
(see Proposition\ref{p3}), which will leads us to the definition of the KdV
flow.

To treat real valued potentials some notions for $Gr^{(2)}$ are necessary.
For a function $f$ on a domain of $\mathbb{C}$ set $\overline{f}\left(
z\right) =\overline{f\left( \overline{z}\right) }$. Define $\overline{W}%
=\left\{ \overline{f}\text{ ; }f\in W\right\} $ and set%
\begin{equation*}
Gr_{\func{real}}^{(2)}=\left\{ W\in Gr^{(2)}\text{; }W=\overline {W}\right\} 
\text{, \ }\mathit{\Gamma}_{\func{real}}=\left\{ g\in\mathit{\Gamma}\text{; }%
g=\overline{g}\right\} \text{,}
\end{equation*}
and%
\begin{equation*}
Gr_{+}^{\left( 2\right) }=\left\{ W\in Gr_{\func{real}}^{\left( 2\right) }%
\text{; \ }\tau_{W}(g)\geq0\text{ for any }g\in\mathit{\Gamma }_{\func{real}%
}\right\} \text{.}
\end{equation*}
For $W\in Gr_{\func{real}}^{(2)}$ the corresponding potential $q_{W}$ takes
real values. The first theorem is

\begin{theorem}
\label{t2}An identity%
\begin{equation*}
Gr_{+}^{\left( 2\right) }=\left\{ W\in Gr_{\func{real}}^{\left( 2\right) }%
\text{; \ }gW\in Gr^{\left( 2\right) }\text{ for any }g\in\mathit{\Gamma}_{%
\func{real}}\right\}
\end{equation*}
holds.
\end{theorem}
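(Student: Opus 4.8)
The plan is to use the stated equivalence $gW\in Gr^{(2)}\Leftrightarrow\tau_W(g)\neq0$ to turn both sides into conditions on the scalar $\tau_W(g)$, and then to control its sign. Writing $\tau_W(e^{h})=\det\left(I+K(h)\right)$ with $K(h)=M_{e^{-h}}\mathfrak{p}_{+}M_{e^{h}}A_W$ (where $M_g$ is multiplication by $g$, and $A_W$ is extended by $0$ on $H_-$), I would first record three facts. First, $\tau_W(1)=\det\left(I+\mathfrak{p}_{+}A_W\right)=1$, since $\mathfrak{p}_{+}A_W=0$ because $A_W$ maps into $H_-$. Second, $\Gamma_{\mathrm{real}}$ is path-connected: for $g=e^{h}\in\Gamma_{\mathrm{real}}$ the segment $s\mapsto e^{sh}$, $s\in[0,1]$, stays in $\Gamma_{\mathrm{real}}$ because $sh$ is again real-holomorphic, and $h\mapsto K(h)$ is continuous (indeed analytic) in trace norm, so by the set-up behind Proposition~\ref{p3} the determinant $\tau_W$ is continuous along such paths. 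Third, and crucially, $\tau_W(g)\in\mathbb{R}$ for $W\in Gr^{(2)}_{\mathrm{real}}$ and $g\in\Gamma_{\mathrm{real}}$: the conjugation $Cf=\overline{f}$ fixes every monomial $z^{n}$ (so it preserves $H_{\pm}$ and commutes with $\mathfrak{p}_{\pm}$), commutes with $M_g$ because $\overline{g}=g$, and commutes with $A_W$ because $\overline{W}=W$; hence $CK(h)=K(h)C$, and since in the basis $\{z^{n}\}$ fixed by $C$ this conjugate-linear relation forces $K(h)$ to have real matrix entries, we get $\det\left(I+K(h)\right)\in\mathbb{R}$.

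Granting these, one inclusion is immediate. If $W$ lies in the right-hand set, i.e. $\tau_W(g)\neq0$ for every $g\in\Gamma_{\mathrm{real}}$, then $\tau_W$ is a continuous, real, nowhere-vanishing function on the connected set $\Gamma_{\mathrm{real}}$ with $\tau_W(1)=1>0$; therefore $\tau_W>0$ throughout, so in particular $\tau_W\geq0$ and $W\in Gr^{(2)}_{+}$. This proves the inclusion ``$\supseteq$''.

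For the reverse inclusion ``$\subseteq$'' I would argue by contraposition: assuming $W\in Gr^{(2)}_{\mathrm{real}}$ with $\tau_W(g_0)=0$ for some $g_0=e^{h_0}\in\Gamma_{\mathrm{real}}$, I must produce some $g$ with $\tau_W(g)<0$, which shows $W\notin Gr^{(2)}_{+}$. The function $F(h)=\tau_W(e^{h})$ is real-analytic in the real parameter $h$, and $F(h_0)=0$ means that $-1$ is an eigenvalue of $K(h_0)$. Differentiating $\det\left(I+K\right)$ and using $dK(h)[\dot h]=M_{e^{-h}}\left[\mathfrak{p}_{+},M_{\dot h}\right]M_{e^{h}}A_W$, one sees that if $-1$ is a simple eigenvalue then $dF(h_0)[\dot h]=c\,\langle\phi,\,M_{e^{-h_0}}\left[\mathfrak{p}_{+},M_{\dot h}\right]M_{e^{h_0}}A_W\psi\rangle$ with $c\neq0$ and $\psi,\phi$ the right and left null vectors of $I+K(h_0)$; choosing a real direction $\dot h$ making this pairing nonzero forces $F$ to change sign across $h_0$, which yields the desired negative value.

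The main obstacle is precisely that the zero of $F$ at $h_0$ need not be simple: a priori a nonnegative $F$ could have even-order contact with $0$, in which case no first-order perturbation detects a sign change. Resolving this is the heart of the theorem, and I would attack it by exploiting the structure not yet used, namely (W.1): the inclusion $z^{2}W\subseteq W$ endows $A_W$ with a Hankel-type intertwining relation which, together with the reality of $K(h)$, should force the eigenvalue crossing $-1$ to be transversal along a suitable real-analytic curve through $h_0$ --- equivalently, that the real zeros of the KdV $\tau$-function are simple, reflecting the movable double poles of $q_W=-2\partial_x^2\log\tau_W$. Establishing this ``zeros are sign-changing'' property uniformly over all $g\in\Gamma_{\mathrm{real}}$ is the step I expect to demand the real work, and is presumably where the Weyl-function representation of $\tau_W$ developed later in the paper enters.
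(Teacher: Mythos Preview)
Your argument for $\supseteq$ is correct: a continuous real nowhere-vanishing function on the path-connected set $\mathit{\Gamma}_{\func{real}}$, equal to $1$ at the identity, is positive throughout. The paper reaches the same conclusion through the dense family $\prod q_{\zeta_k}q_{\overline{\zeta}_k}$ and Lemma~\ref{l6}(i), but the underlying intermediate-value idea is identical.

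For $\subseteq$ there is a genuine gap, and the paper does \emph{not} resolve it along the lines you sketch. Your contrapositive approach --- show that a zero of $\tau_W$ forces a nearby negative value via eigenvalue perturbation of $I+K(h)$ --- founders, as you yourself note, on possible even-order zeros, and neither the $z^2$-invariance nor the later Weyl-function representation supplies the transversality you hope for. The paper instead proves \emph{directly} that $W\in Gr_+^{(2)}$ forces $\tau_W(g)>0$ strictly, using the multiplicative structure rather than any analysis of zeros. The mechanism is: (i) the explicit formula (\ref{51}) gives $\tau_W(q_\zeta q_{\overline{\zeta}})=|1+\varphi_W(\zeta)|^2\,\func{Im}m_W(\zeta)/\func{Im}\zeta$, and a maximum-principle argument (Lemma~\ref{l6}(ii)) upgrades nonnegativity of this harmonic quantity to strict positivity; (ii) the cocycle identity $\tau_W(g_1g_2)=\tau_W(g_1)\tau_{g_1W}(g_2)$ then inductively yields $\tau_W>0$ on all products $\prod q_{\zeta_k}q_{\overline{\zeta}_k}$ and, via $p_\zeta=q_{-\zeta}^{-1}$ and (\ref{52}), on products $\prod p_{\zeta_k}p_{\overline{\zeta}_k}$ as well (Lemma~\ref{l6}(iii)); (iii) for general $g$ one approximates $g^{-1}$ by $q$-products $g_n$, so $gg_n\to1$ and $\tau_W(gg_n)>0$ by continuity, whence $gg_nW\in Gr_+^{(2)}$, and then cancels $g_n$ by the $p$-product $g_n^{-1}$ to conclude $\tau_W(g)=\tau_W(gg_n)\,\tau_{gg_nW}(g_n^{-1})>0$. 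So the missing idea is an algebraic bootstrap from the rational generators $q_\zeta$ --- where positivity is a concrete Herglotz-type fact about $m_W$ --- to all of $\mathit{\Gamma}_{\func{real}}$ via the cocycle property, not perturbation theory for the Fredholm determinant.
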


A sufficient condition for $W$ to be an element of $Gr_{+}^{\left( 2\right)
} $ will be given by using the $m$-function. Although the potential $q_{W}$
is obtained from $W\in Gr^{\left( 2\right) }$ through $\tau_{W}$, this
correspondence is not one to one. The quantity determining $q_{W}$ is called
in this paper $m$\emph{-function} given by%
\begin{equation*}
m_{W}(z)=-\dfrac{f^{\prime}(0,z)}{f(0,z)}
\end{equation*}
with the Baker-Akhiezer function $f$. \S\ 5 is devoted to investigation of $%
m $-functions, especially $\tau_{W}$ is decomposed into two components, one
of which is expressed by $\tau_{m_{W}}$ and depends only on $m_{W}$.

Usually integrable Hamiltonian systems have been linearized through
action-angle variables. In order to apply this point of view to the KdV
equation we need at least integrability of the solutions, namely $%
\int_{-\infty}^{\infty}\left\vert u\left( x\right) \right\vert dx<\infty$ in
decaying case and $\int_{0}^{1}\left\vert u\left( x\right) \right\vert
dx<\infty$ in periodic case, and one can define conserved quantities
suitably. For almost periodic case \cite{j-m}, \cite{mo} considered the KdV
equation in the framework of Hamiltonian systems. However, the lack of
compactness and connectedness of the invariant leaves prohibits to develop
the argument further. Sato did not take this approach and constructed
directly the flow by the tau-functions. Therefore, in his theory the
tau-function is the crucial quantity. In the context of Sato's theory the
Weyl functions were first used by \cite{j1} to define an element of Sato's
Grassmann manifold. The purpose of the present paper is to give a
representation of the tau-functions by Weyl functions for 1d Schr\"{o}dinger
operators with real potentials, so that one can obtain more general
solutions.

To state the main results we need more terminologies. Suppose a Schr\"{o}%
dinger operator $L_{q}=-\partial_{x}^{2}+q$ with real valued $q\in
L_{loc}^{1}\left( \mathbb{R}\right) $ is essentially self-adjoint on $%
L^{2}\left( \mathbb{R}\right) $ (the boundedness of $q$ is sufficient for
this). Then it is known that $\dim\left\{ f\in L^{2}\left( \mathbb{R}_{\pm
}\right) \text{; }L_{q}f=zf\right\} =1$ for every $z\in\mathbb{C}\backslash%
\mathbb{R}$. The \emph{Weyl functions} $m_{\pm}$ are defined by%
\begin{equation*}
m_{\pm}\left( z\right) =\pm\dfrac{f_{\pm}^{\prime}\left( 0,z\right) }{%
f_{\pm}\left( 0,z\right) }
\end{equation*}
with two non-trivial $f_{\pm}\in\left\{ f\in L^{2}\left( \mathbb{R}_{\pm
}\right) \text{; }L_{q}f=zf\right\} $. $m_{\pm}$ are holomorphic on $\mathbb{%
C}\backslash\mathbb{R}$ and have positive imaginary parts on $\mathbb{C}_{+}$%
. Gelfand-Levitan, Marchenko showed that $q$ can be recovered from $m_{\pm}$
uniquely (see \cite{m1}). A potential $q$ is called \emph{reflectionless} on 
$F\in\mathcal{B}\left( \mathbb{R}\right) $ if its Weyl functions $m_{\pm}$
satisfy%
\begin{equation}
m_{+}\left( \xi+i0\right) =-\overline{m_{-}\left( \xi+i0\right) }\left(
=-m_{-}\left( \xi-i0\right) \right) \text{ \ \ a.e. }\xi\in F\text{.}
\label{5}
\end{equation}
Set%
\begin{equation}
m(z)=\left\{ 
\begin{tabular}{cc}
$-m_{+}\left( -z^{2}\right) $ & $\text{if \ }\func{Re}z>0$ \\ 
$m_{-}\left( -z^{2}\right) $ & $\text{if \ }\func{Re}z<0$%
\end{tabular}
\right. \text{,}  \label{13}
\end{equation}
and assume that there exist $\lambda_{0}<0<\lambda_{1}$ such that%
\begin{equation*}
\inf\text{\textrm{sp}}L_{q}>\lambda_{0}\text{, and }q\text{ is
reflectionless on }\left( \lambda_{1},\infty\right) \text{.}
\end{equation*}
Then, $m$ is holomorphic on $\mathbb{C}\backslash\left( \left[ -\sqrt{%
-\lambda_{0}},\sqrt{-\lambda_{0}}\right] \cup i\left[ -\sqrt {\lambda_{1}},%
\sqrt{\lambda_{1}}\right] \right) $, and has an expansion at $z=\infty$ like%
\begin{equation*}
m\left( z\right) =z+m_{1}z^{-1}+m_{2}z^{-2}+\cdots\text{.}
\end{equation*}

It will be seen later that this property of $m$ implies the existence of the
Baker-Akhiezer function with $r>\sqrt{\left( -\lambda_{0}\right) \vee
\lambda_{1}}$, and%
\begin{equation*}
W_{m}=\left\{ \varphi\left( z^{2}\right) +\psi\left( z^{2}\right) m(z)\text{%
; \ }\varphi,\text{ }\psi\in H_{+}\right\}
\end{equation*}
is an element of $Gr_{+}^{\left( 2\right) }$ (see Proposition\ref{p11}).

The second theorem is related to the construction of KdV flow by using $m$.
Set%
\begin{equation}
\left\{ 
\begin{array}{l}
\mathcal{Q}_{\infty}=\left\{ 
\begin{array}{c}
q\text{; }q\text{ is reflectionless on }\left( r^{2},\infty\right) \text{ and%
} \\ 
\text{\textrm{sp} }L_{q}\subset\lbrack-r^{2},\infty)\text{ for some }r>0%
\end{array}
\right\} \smallskip \\ 
\mathit{\Gamma}_{\func{real}}^{\infty}=\left\{ g=e^{h}\text{; \ \ }h\text{
is an entire function with }\overline{h}=h\text{.}\right\}%
\end{array}
\right. \text{,}  \label{59}
\end{equation}
and for $q\in\mathcal{Q}_{\infty}$ define%
\begin{equation*}
\left( K(g)q\right) \left( x\right)
=-2\partial_{x}^{2}\log\tau_{W_{m}}\left( ge_{x}\right) \text{,}
\end{equation*}
where $m$ is the $m$-function defined by (\ref{13}). Then, we have

\begin{theorem}
\label{t3}$\left\{ K(g)\right\} _{g\in\mathit{\Gamma}_{\func{real}%
}^{\infty}} $ defines a flow on $\mathcal{Q}_{\infty}$ such that%
\begin{equation*}
\left\{ 
\begin{array}{l}
\left( K\left( e^{tz}\right) q\right) \left( x\right) =q\left( x+t\right) 
\text{,\smallskip} \\ 
\left( K\left( e^{-4tz^{3}}\right) q\right) \left( x\right) \text{ \textit{%
satisfies the KdV equation}.}%
\end{array}
\right.
\end{equation*}
\end{theorem}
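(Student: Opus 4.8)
The plan is to break the statement into three assertions: that each $K(g)$ preserves the class $\mathcal{Q}_{\infty}$, that $g\mapsto K(g)$ respects the group law and hence defines a flow, and that the two distinguished generators act as stated. Of these, the invariance of $\mathcal{Q}_{\infty}$ is the genuinely new point and will rest on the $m$-function analysis of \S 5, whereas the group law and the two explicit formulas follow from the cocycle identity of Proposition \ref{p3} and the computations already made in the introduction. Throughout I fix $q\in\mathcal{Q}_{\infty}$, write $m$ for the $m$-function (\ref{13}) attached to $q$, and let $W_{m}\in Gr_{+}^{\left(2\right)}$ be the subspace of Proposition \ref{p11}, which satisfies $q_{W_{m}}=q$; I also write $e_{x}=e^{xz}$. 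For $g\in\mathit{\Gamma}_{\func{real}}^{\infty}\subset\mathit{\Gamma}_{\func{real}}$ the product $ge_{x}$ again lies in $\mathit{\Gamma}_{\func{real}}$, so Theorem \ref{t2} yields $\left(ge_{x}\right)W_{m}\in Gr^{\left(2\right)}$ and hence $\tau_{W_{m}}\left(ge_{x}\right)>0$; thus $\log\tau_{W_{m}}\left(ge_{x}\right)$ is a well-defined real function of $x$ and $\left(K(g)q\right)(x)$ is meaningful.

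The two explicit formulas are then immediate, because $ge_{x}$ is itself a single exponential. For $g=e^{tz}$ one has $ge_{x}=e^{\left(x+t\right)z}=e_{x+t}$, so Lemma \ref{l4} gives $\left(K\left(e^{tz}\right)q\right)\left(x\right)=-2\partial_{x}^{2}\log\tau_{W_{m}}\left(e_{x+t}\right)=q_{W_{m}}\left(x+t\right)=q\left(x+t\right)$. For $g=e^{-4tz^{3}}$ one has $ge_{x}=e^{xz-4tz^{3}}$, so $\left(K\left(e^{-4tz^{3}}\right)q\right)\left(x\right)=-2\partial_{x}^{2}\log\tau_{W_{m}}\left(e^{xz-4tz^{3}}\right)=q_{W_{m}}\left(x,-4t\right)$, which is exactly the function that the computation of the introduction shows to satisfy $\partial_{t}f=6f\partial_{x}f-\partial_{x}^{3}f$.

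The main obstacle is to prove that $K(g)q$ belongs again to $\mathcal{Q}_{\infty}$. The Grassmannian half is easy: from the cocycle identity $\tau_{gW_{m}}\left(g'\right)=\tau_{W_{m}}\left(gg'\right)/\tau_{W_{m}}\left(g\right)$ and $W_{m}\in Gr_{+}^{\left(2\right)}$ one reads off $\tau_{gW_{m}}\left(g'\right)\geq0$ for every $g'\in\mathit{\Gamma}_{\func{real}}$, so $gW_{m}\in Gr_{+}^{\left(2\right)}$ by Theorem \ref{t2}, and $K(g)q=q_{gW_{m}}$ is a real, reflectionless-type potential. The hard part will be the spectral statement: I must show that the $m$-function $m_{gW_{m}}$ of the new potential is again of the form (\ref{13}), holomorphic off $\left[-\sqrt{-\lambda_{0}'},\sqrt{-\lambda_{0}'}\right]\cup i\left[-\sqrt{\lambda_{1}'},\sqrt{\lambda_{1}'}\right]$ with expansion $z+m_{1}'z^{-1}+\cdots$, so that $K(g)q$ is reflectionless on some $\left(r'^{2},\infty\right)$ with $\mathrm{sp}\,L_{K(g)q}\subset\lbrack-r'^{2},\infty)$. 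This is where the analysis of \S 5 carries the weight: the real branch cut of $m$ (the bottom of the spectrum) and its imaginary branch cut (the reflectionless threshold) must be shown to survive multiplication by $g=e^{h}$ with $h$ entire and real, which is the analytic content of isospectrality and of the persistence of reflectionlessness along the flow. I expect this to be the step demanding the most work.

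It remains to establish the group law, which may now use the invariance just proved. Since $\mathit{\Gamma}$ is abelian, Proposition \ref{p3} gives $\tau_{W_{m}}\left(g_{2}e_{x}\right)=\tau_{W_{m}}\left(g_{2}\right)\tau_{g_{2}W_{m}}\left(e_{x}\right)$, and the $x$-independent factor drops under $-2\partial_{x}^{2}\log$, so $\left(K\left(g_{2}\right)q\right)\left(x\right)=q_{g_{2}W_{m}}\left(x\right)$; taking $g_{2}=1$ yields $K\left(1\right)q=q_{W_{m}}=q$. Writing $g_{1}g_{2}e_{x}=g_{2}\left(g_{1}e_{x}\right)$ and applying the cocycle once more gives $\left(K\left(g_{1}g_{2}\right)q\right)\left(x\right)=-2\partial_{x}^{2}\log\tau_{g_{2}W_{m}}\left(g_{1}e_{x}\right)$. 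On the other hand $K\left(g_{2}\right)q\in\mathcal{Q}_{\infty}$ has $m$-function $m^{\left(2\right)}=m_{g_{2}W_{m}}$ and canonical subspace $W_{m^{\left(2\right)}}$, whence $\left(K\left(g_{1}\right)\left(K\left(g_{2}\right)q\right)\right)\left(x\right)=-2\partial_{x}^{2}\log\tau_{W_{m^{\left(2\right)}}}\left(g_{1}e_{x}\right)$. Because $W_{m^{\left(2\right)}}$ and $g_{2}W_{m}$ carry the same $m$-function $m^{\left(2\right)}$, the decomposition of \S 5---by which $-2\partial_{x}^{2}\log\tau_{W}\left(g_{1}e_{x}\right)$ depends on $W$ only through $m_{W}$---forces the two right-hand sides to coincide, giving $K\left(g_{1}g_{2}\right)=K\left(g_{1}\right)K\left(g_{2}\right)$ and completing the verification that $K$ is a flow on $\mathcal{Q}_{\infty}$.
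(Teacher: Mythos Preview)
Your overall architecture matches the paper's: reduce everything to the cocycle identity (Proposition~\ref{p3}), the $m$-function independence of $q_W$ (Corollary~\ref{c3} / Proposition~\ref{p6}), and the invariance of $\mathcal{Q}_\infty$ under $\mathit{\Gamma}_{\func{real}}^\infty$. The translation and KdV formulas you derive exactly as the paper intends (the latter being the computation of the Introduction).

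The one place where your write-up reads as incomplete is the invariance step, which you flag as ``demanding the most work'' and defer to \S5. In the paper this is not left open: it is precisely the last assertion of Proposition~\ref{p11}, namely $m_{gW_m}\in\mathcal{M}_s$ for every $g\in\mathit{\Gamma}_{\func{real}}(\mathbb{D}_s)$. That statement is proved there by approximating $g$ by products $\prod q_{\zeta_k}q_{\overline{\zeta}_k}$, using the explicit formula $m_{q_\zeta W}=d_\zeta m$ from (\ref{50}) to write $m_{g_n W_m}$ as an iterated $d_{\overline{\zeta}}d_\zeta$-transform (identity (\ref{40})), and then invoking Lemma~\ref{l9}, which shows $d_{\overline{\zeta}}d_\zeta$ preserves the class $\mathcal{H}$. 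Together with Proposition~\ref{p8} this gives $K(g)q\in\mathcal{Q}_\infty$. So nothing new is needed; you should replace your deferral with a citation of Proposition~\ref{p11} (and hence Lemma~\ref{l9}).

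On the flow law your argument is in fact a bit more scrupulous than the paper's. The paper writes $(K(g_1)K(g_2)q)(x)=-2\partial_x^2\log\tau_{g_1(g_2W_m)}(e_x)$ directly, silently identifying $g_2W_m$ with the canonical $W_{m^{(2)}}$; you make that identification explicit via the $m$-dependence of $\tau_m$ from Proposition~\ref{p4}/Corollary~\ref{c3}. Both routes are valid and lead to the same conclusion.
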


The next task is to represent the tau-function $\tau _{W_{m}}$ more
concretely. For a function $f$ set%
\begin{equation*}
f_{e}(z)=\dfrac{1}{2}\left( f\left( \sqrt{z}\right) +f\left( -\sqrt{z}%
\right) \right) \text{, \ \ }f_{o}(z)=\dfrac{1}{2\sqrt{z}}\left( f\left( 
\sqrt{z}\right) -f\left( -\sqrt{z}\right) \right) \text{.}
\end{equation*}%
Let $D$, $D^{\prime }$ be simply connected bounded domains in $\mathbb{C}$
containing the interval $\left[ -\lambda _{1},-\lambda _{0}\right] $, and
satisfy $D\subset D^{\prime }$. Set $C=\partial C$, $C^{\prime }=\partial
D^{\prime }$ the boundaries of $D$, $D^{\prime }$ respectively. We assume $C$%
, $C^{\prime }$ are smooth curves and surround $\left[ -\lambda
_{1},-\lambda _{0}\right] $ counterclockwise. For $\delta $ whose $\delta
_{e}$, $\delta _{o}$ are holomorphic in a simply connected domain including $%
C^{\prime }$, set $\widetilde{m}(z)=m(z)-\delta \left( z\right) $, and define%
\begin{equation}
\left\{ 
\begin{array}{l}
M_{g}\left( z,\lambda \right) =\dfrac{\widehat{g}_{o}\left( z\right) \left( g%
\widetilde{m}\right) _{e}\left( \lambda \right) +\widehat{g}_{e}\left(
z\right) \left( g\widetilde{m}\right) _{o}\left( \lambda \right) }{\lambda -z%
}\smallskip  \\ 
N_{g}\left( z,\lambda \right) =\dfrac{1}{2\pi i}\dint_{C^{\prime }}\dfrac{%
M_{g}\left( \lambda ^{\prime },\lambda \right) }{\lambda ^{\prime }-z}%
m_{o}\left( \lambda ^{\prime }\right) ^{-1}d\lambda ^{\prime }\smallskip  \\ 
\left( N_{m}(g)f\right) \left( z\right) =\dfrac{1}{2\pi i}%
\dint_{C}N_{g}\left( z,\lambda \right) f\left( \lambda \right) d\lambda 
\text{.}%
\end{array}%
\right.   \label{57}
\end{equation}%
where $\widehat{g}\left( z\right) =g(z)^{-1}$. The operator $N_{m}(g)$
defines a trace class operator on $L^{2}\left( C\right) $. \medskip 

\begin{theorem}
\label{t1}For $q\in\mathcal{Q}_{\infty}$ and $g\in\mathit{\Gamma }_{\func{%
real}}^{\infty}$ we have%
\begin{equation*}
\tau_{W_{m}}\left( g\right) =\det\left( I+N_{m}(g)\right) \text{.}
\end{equation*}
\end{theorem}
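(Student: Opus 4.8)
The plan is to transport the whole computation to the spectral variable $\lambda=z^{2}$, where the even/odd decomposition $f(z)=f_{e}(z^{2})+zf_{o}(z^{2})$ diagonalizes the defining structure of $W_{m}$. First I would make the operator $A_{W_{m}}$ explicit. Writing an element $h=f_{+}+f_{-}$ of $H$ and comparing even and odd parts of the membership condition $h=\varphi(z^{2})+\psi(z^{2})m(z)$ with $\varphi,\psi\in H_{+}$, one gets $h_{o}=\psi\,m_{o}$ and $h_{e}=\varphi+\psi\,m_{e}$, hence $\psi=h_{o}/m_{o}$ and $\varphi=h_{e}-h_{o}m_{e}/m_{o}$. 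The requirement that $\varphi,\psi$ be holomorphic inside the circle in the $\lambda$-variable, together with $f_{-}\in H_{-}$, determines $f_{-}=A_{W_{m}}f_{+}$ as a Riemann--Hilbert problem solved by the Cauchy projections onto the even/odd images $H_{\pm}^{(\lambda)}$ of $H_{\pm}$. The essential point is that $m_{e},m_{o}$ are single-valued and holomorphic in $\lambda$ off the cut $[-\lambda_{1},-\lambda_{0}]$ (the image under $\lambda=z^{2}$ of the two cuts of $m$), so the only obstruction to holomorphy of $\psi$ and $\varphi$ is localized there and $A_{W_{m}}$ differs from its free ($m=z$) counterpart by an operator supported near that cut.

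Next I would substitute this description into $\tau_{W_{m}}(g)=\det(I+g^{-1}\mathfrak{p}_{+}gA_{W_{m}})$ from \eqref{4} and compute the kernel of $g^{-1}\mathfrak{p}_{+}gA_{W_{m}}$ on $H_{+}$. Here $g^{-1}$ acts at the output variable $z$, the factor $g\widetilde{m}$ coming from $A_{W_{m}}$ acts at the input variable $\lambda$, and the intervening projection $\mathfrak{p}_{+}$ becomes a Cauchy kernel $1/(\lambda-z)$ separating the two. Using the product rules $(ab)_{e}=a_{e}b_{e}+\lambda a_{o}b_{o}$ and $(ab)_{o}=a_{e}b_{o}+a_{o}b_{e}$, the combination $\widehat{g}_{o}(z)(g\widetilde{m})_{e}(\lambda)+\widehat{g}_{e}(z)(g\widetilde{m})_{o}(\lambda)$ in \eqref{57} appears exactly as the odd part, in the sense of this decomposition, of the product $\widehat{g}\cdot(g\widetilde{m})$ with its two arguments separated into $z$ and $\lambda$. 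The replacement of $m$ by $\widetilde{m}=m-\delta$ merely discards the part of $m$ holomorphic across $C'$, which contributes nothing after contour deformation, while the division by $m_{o}$ forced by $\psi=h_{o}/m_{o}$ supplies the factor $m_{o}(\lambda')^{-1}$ in $N_{g}$.

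At this stage the integrals still run over $|z|=r$ (equivalently $|\lambda|=r^{2}$). The crucial step is to deform them onto the contours $C,C'$ around $[-\lambda_{1},-\lambda_{0}]$: since $\varphi,\psi,g$ are holomorphic inside $C'$ while $\widetilde{m}_{e},\widetilde{m}_{o}$ are holomorphic in $\lambda$ off the cut, Cauchy's theorem collapses each circle integral to one encircling the cut, the cross terms between the inside-holomorphic data and the cut-supported data vanishing by analyticity. Matching the two resulting nested Cauchy integrals with \eqref{57} identifies the kernel with $N_{g}(z,\lambda)$ and the operator with $N_{m}(g)$ on $L^{2}(C)$. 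Finally the Fredholm determinant over $H_{+}$ is shown to equal the one over $L^{2}(C)$ by the identity $\det(I+AB)=\det(I+BA)$, factoring $g^{-1}\mathfrak{p}_{+}gA_{W_{m}}$ through the even/odd isomorphism $H_{+}\cong H_{+}^{(\lambda)}\oplus H_{+}^{(\lambda)}$ and the restriction to $C$; the trace-class property stated before the theorem guarantees both determinants converge, and the finiteness and non-vanishing of $\tau_{W_{m}}(g)$ where asserted follow from $W_{m}\in Gr_{+}^{(2)}$ (Proposition \ref{p11}).

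I expect the main obstacle to be the contour-deformation bookkeeping: one must verify that conjugation by $g$ and the two successive projections produce precisely the nested kernel of \eqref{57} with no spurious boundary contributions, that the cross terms between $(\varphi,\psi,\widehat{g})$ and $(m_{e},m_{o})$ cancel under Cauchy's theorem, and that $m_{o}^{-1}$ lands in the inner ($C'$) integral rather than the outer one. Controlling the interplay of the two nested Cauchy kernels, and confirming that the even/odd reduction genuinely preserves the Fredholm determinant rather than merely the formal trace, is where the real work lies.
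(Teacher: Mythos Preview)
Your plan is correct and follows essentially the same route as the paper. The paper carries out exactly the even/odd reduction you describe by passing to the $2\times 2$ matrix Toeplitz picture $\boldsymbol{W}_{m}=\mathit{\Pi}_{m}\boldsymbol{H}_{+}$ with $\mathit{\Pi}_{m}=\left(\begin{smallmatrix}1&m_{e}\\0&m_{o}\end{smallmatrix}\right)$, computes $A_{\boldsymbol{W}_{m}}$ explicitly via $\boldsymbol{T}(\mathit{\Pi}_{m})^{-1}$ (this is where your $\psi=h_{o}/m_{o}$ and the factor $T(m_{o}^{-1})$ appear), observes that $A_{\boldsymbol{W}_{m}}$ kills the first component so the determinant collapses to the scalar block $\det\bigl(I+(\widehat{g}_{o}T((gm)_{e})+\widehat{g}_{e}T((gm)_{o})-T(m_{o}))T(m_{o}^{-1})\bigr)$, and then writes this as $\det(I+ST)=\det(I+TS)$ with $S,T$ integral operators between $H_{+}$ and $L^{2}(C)$, deforming the circle integrals to $C,C'$ by analyticity and checking by a residue computation that the extra $m_{o}(\lambda)/(\lambda-z)$ term contributes zero; the only practical difference is that the matrix Toeplitz language makes the ``only the odd block survives'' step and the placement of $m_{o}^{-1}$ in the inner integral completely mechanical, resolving the bookkeeping you flagged as the main obstacle.
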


\begin{remark}
\textit{The class }$\mathcal{Q}_{\infty}$\textit{\ contains multi-solitons,
algebro-geometric solutions and they are dense in }$\mathcal{Q}_{\infty}$%
\textit{. Especially all solutions considered in \cite{g-k-z} are included
if }$\inf$\textrm{sp}$L_{q}>-\infty$\textit{. A necessary condition was
obtained by \cite{lu}, namely she} \textit{proved that} if $\lambda_{0}=\inf$
\textrm{sp}$L_{q}$, and $q$ is reflectionless on $\left(
\lambda_{1},\infty\right) $, then $q$\textit{\ is holomorphic on }$\left\{
\left\vert \func{Im}z\right\vert <\left( \lambda_{1}-\lambda_{0}\right)
^{-1/2}\right\} $\textit{\ and has bound}%
\begin{equation*}
\left\vert q(z)-\lambda_{1}\right\vert \leq2\left( \lambda_{1}-\lambda
_{0}\right) \left( 1-\sqrt{\lambda_{1}-\lambda_{0}}\left\vert \func{Im}%
z\right\vert \right) ^{-2}\text{.}
\end{equation*}
\textit{Since }$\tau_{W}\left( e_{x}\right) $\textit{\ is entire as a
function of }$x$,\textit{\ we know from Theorem\ref{t3} that }$q(x)$\textit{%
\ is meromorphic on the entire complex plane }$\mathbb{C}$.\medskip
\end{remark}

This paper is intended to be self-contained, so many results are overlapping
with those of \cite{s-w}. The author already published one paper \cite{k} on
the property $\tau_{W}(g)\neq0$. However, it contains several mistakes in
the proofs, moreover, the whole story was not well organized. The present
paper tries to improve these points by employing several basic notions from
the theory of Toeplitz operators.

Throughout the paper we use the following notations. $\mathbb{R}$ denotes
the real line and $\mathbb{C}$ denotes the whole complex plane, and%
\begin{equation*}
\left\{ 
\begin{array}{l}
\mathbb{R}_{+}=\left\{ x\in\mathbb{R}\text{; \ }x>0\right\} \text{, \ \ \ \
\ }\mathbb{R}_{-}=\left\{ x\in\mathbb{R}\text{; \ }x<0\right\} \\ 
\mathbb{C}_{+}=\left\{ z\in\mathbb{C}\text{; \ }\func{Im}z>0\right\} \text{,
\ }\mathbb{C}_{-}=\left\{ z\in\mathbb{C}\text{; \ }\func{Im}z<0\right\} \\ 
\mathbb{D}_{r}=\left\{ z\in\mathbb{C}\text{; \ }\left\vert z\right\vert
<r\right\} \text{, \ \ \ \ \ \ }\partial\mathbb{D}_{r}=\left\{ z\in\mathbb{C}%
\text{; \ }\left\vert z\right\vert =r\right\}%
\end{array}
\right. \text{.}
\end{equation*}
Moreover, $\sqrt{z}$ is defined as a univalent function on $\mathbb{C}%
\backslash\mathbb{R}_{-}$ so that $\sqrt{1}=1$, hence $\sqrt{z}$ satisfies $%
\sqrt{z}=\overline{\sqrt{\overline{z}}}$.

\section{Grassmann manifold $Gr^{\left( 2\right) }$}

For completeness sake we define the relevant spaces again. Let $%
H=L^{2}\left( \partial\mathbb{D}_{r}\right) $ and set%
\begin{equation*}
H_{+}=\text{the span of }\left\{ z^{n}\right\} _{n\geq0}\text{, \ \ \ }H_{-}=%
\text{the span of }\left\{ z^{n}\right\} _{n\leq-1}
\end{equation*}
in $H$. Then, $H=H_{+}\oplus H_{-}$ (orthogonal sum) holds and $\mathfrak{p}%
_{\pm}$ are the orthogonal projections to $H_{\pm}$ respectively. A closed
subspace $W$ of $H$ is $z^{2}$-\emph{invariant} if it satisfies the
condition: 
\begin{equation}
z^{2}W\subset W\text{.}  \label{6}
\end{equation}
Any $z^{2}$-invariant subspace can be identified with a shift invariant
subspace in the product space as follows. Product spaces of $H$, $H_{+}$, $%
H_{-}$ are denoted by the bold $\boldsymbol{H}$, $\boldsymbol{H}_{+}$, $%
\boldsymbol{H}_{-}$, namely%
\begin{equation*}
\boldsymbol{H}=H\times H,\text{ \ \ \ }\boldsymbol{H}_{+}=H_{+}\times H_{+},%
\text{ \ \ \ }\boldsymbol{H}_{-}=H_{-}\times H_{-}
\end{equation*}
respectively. An identity $\boldsymbol{H}=\boldsymbol{H}_{+}\oplus 
\boldsymbol{H}_{-}$ holds and the associated orthogonal projections are
denoted by $\mathfrak{p}_{\pm}$ again. For $\boldsymbol{z}=$ $^{t}\left(
z_{1},z_{2}\right) $, $\boldsymbol{w}=$ $^{t}\left( w_{1},w_{2}\right) \in%
\mathbb{C}^{2}$ denote%
\begin{equation*}
\boldsymbol{z}\cdot\boldsymbol{w}=z_{1}w_{1}+z_{2}w_{2}\text{, \ \ \ }%
\left\Vert \boldsymbol{z}\right\Vert =\sqrt{\boldsymbol{z}\cdot\overline {%
\boldsymbol{z}}}\text{, \ \ \ }\overline{\boldsymbol{z}}=\text{ }^{t}\left( 
\overline{z_{1}},\overline{z_{2}}\right) \text{.}
\end{equation*}
For a function $f\left( z\right) =\sum_{n}f_{n}z^{n}$ on $\partial \mathbb{D}%
_{r}$ set%
\begin{equation*}
f_{e}(z)=\sum_{n}f_{2n}z^{n}\text{, \ \ \ \ }f_{o}(z)=\sum_{n}f_{2n+1}z^{n}%
\text{.}
\end{equation*}
Then, we have an isomorphism%
\begin{equation}
H\ni f\rightarrow\phi\left( f\right) =\left( 
\begin{array}{c}
f_{e} \\ 
f_{o}%
\end{array}
\right) \in\boldsymbol{H}\text{.}  \label{7}
\end{equation}
Then, $\boldsymbol{W}\equiv\phi\left( W\right) $ for a $z^{2}$-invariant
subspace $W\subset H$ satisfies%
\begin{equation}
z\boldsymbol{W}\subset\boldsymbol{W}\text{.}  \label{8}
\end{equation}
A closed subspace $\boldsymbol{W}$ of $\boldsymbol{H}$ is called \emph{shift
invariant} if it satisfies (\ref{8}). This $\phi$ clearly defines an
isomorphism between $z^{2}$-invariant subspaces $W$ and shift invariant
subspaces $\boldsymbol{W}$. We identify $W$ with $\boldsymbol{W}$ from now
on.

An example of shift invariant subspace in $\boldsymbol{H}$ is given by a $%
2\times2$ non-singular matrix function $A(z)$ on $\partial\mathbb{D}_{r}$.
Assume every entry of $A(z)$, $A(z)^{-1}$ belongs to $L^{\infty}\left(
\partial\mathbb{D}_{r}\right) $, and define%
\begin{equation}
\begin{array}{l}
\boldsymbol{W}=A(z)\boldsymbol{H}_{+}=\left\{ \boldsymbol{f}\left( z\right)
=A(z)\boldsymbol{u}\left( z\right) \text{; \ \ }\boldsymbol{u}\in\boldsymbol{%
H}_{+}\right\}%
\end{array}
\text{.}  \label{9}
\end{equation}
Then, this $\boldsymbol{W}$ is a shift invariant closed subspace of $%
\boldsymbol{H}$, since so is $\boldsymbol{H}_{+}$. Denote by $Gr^{\left(
2\right) }$ the set of all $z^{2}$-invariant closed subspaces $W$ of $H$
satisfying%
\begin{equation}
\mathfrak{p}_{+}\text{: \ }W\rightarrow H_{+}\text{ \ \ is bijective.}
\label{10}
\end{equation}
$W\in Gr^{\left( 2\right) }$ if and only if $\boldsymbol{W}=\phi\left(
W\right) $ is a shift invariant closed subspace of $\boldsymbol{H}$
satisfying%
\begin{equation}
\mathfrak{p}_{+}\text{: \ }\boldsymbol{W}\rightarrow\boldsymbol{H}_{+}\text{
\ \ is bijective.}  \label{11}
\end{equation}
The property (\ref{11}) is equivalent to the invertibility of the associated
Toeplitz operator. A Toeplitz operator $T(a)$ with $a\in L^{\infty}\left(
\partial\mathbb{D}_{r}\right) $ is a bounded operator on $H_{+}$ defined by%
\begin{equation*}
T(a)f=\mathfrak{p}_{+}\left( af\right) \text{ \ \ \ \ for \ }f\in H_{+}\text{%
.}
\end{equation*}
For a bounded matrix function $A(z)$ the associated (matrix) Toeplitz
operator $\boldsymbol{T}(A)$ is defined by%
\begin{equation*}
\boldsymbol{T}(A)f=\mathfrak{p}_{+}\left( A\boldsymbol{f}\right) \text{ \ \
\ \ for \ }\boldsymbol{f}\in\boldsymbol{H}_{+}\text{.}
\end{equation*}
In (\ref{9}) $\boldsymbol{W}$ is a shift invariant closed subspace of $%
\boldsymbol{H}$, and $\boldsymbol{W}$ satisfies (\ref{11}) if and only if $%
\boldsymbol{T}(A)$ is invertible. Generally the invertibility of $%
\boldsymbol{T}(A)$ is not easy to verify. However, there is a case where one
can reduce the problem to the invertibility of a scaler Toeplitz operator as
follows. For a bounded function $m$ on $\partial\mathbb{D}_{r}$ define%
\begin{equation}
A(z)=\left( 
\begin{array}{cc}
1 & m_{e}(z) \\ 
0 & m_{o}(z)%
\end{array}
\right) \text{, \ then }\boldsymbol{T}\left( A\right) =\left( 
\begin{array}{cc}
I & T\left( m_{e}\right) \\ 
0 & T\left( m_{o}\right)%
\end{array}
\right) \text{.}  \label{12}
\end{equation}
Since%
\begin{align*}
\left( 
\begin{array}{cc}
I & T\left( m_{e}\right) \\ 
0 & T\left( m_{o}\right)%
\end{array}
\right) \left( 
\begin{array}{cc}
I & -T\left( m_{e}\right) T\left( m_{o}\right) ^{-1} \\ 
0 & T\left( m_{o}\right) ^{-1}%
\end{array}
\right) & =\left( 
\begin{array}{cc}
I & 0 \\ 
0 & I%
\end{array}
\right) \\
\left( 
\begin{array}{cc}
I & -T\left( m_{e}\right) T\left( m_{o}\right) ^{-1} \\ 
0 & T\left( m_{o}\right) ^{-1}%
\end{array}
\right) \left( 
\begin{array}{cc}
I & T\left( m_{e}\right) \\ 
0 & T\left( m_{o}\right)%
\end{array}
\right) & =\left( 
\begin{array}{cc}
I & 0 \\ 
0 & I%
\end{array}
\right) \text{,}
\end{align*}
$\boldsymbol{T}\left( A\right) $ is invertible if and only if so is $T\left(
m_{o}\right) $. A sufficient condition for the invertibility of $T\left(
m_{o}\right) $ is given in the Lemma:

\begin{lemma}
\label{l12}Let $a$ be continuous on $\mathbb{C}\backslash\mathbb{D}_{r}$
satisfying\newline
(i) \ $a\left( z\right) \neq0$ for any $z\in \mathbb{C}\backslash\mathbb{D}%
_{r}$.\newline
(ii) \ $a\left( z\right) -1\in H_{-}$.\newline
Then, $T\left( a\right) $ is invertible on $H_{+}$ and $T\left( a\right)
^{-1}=T(a^{-1})$ is valid.
\end{lemma}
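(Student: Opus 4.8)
The plan is to reduce the whole statement to the elementary multiplication rule for Toeplitz operators whose symbol is co-analytic from the exterior disc. First I would reinterpret the hypotheses function-theoretically. Condition (ii) says the boundary function of $a$ on $\partial\mathbb{D}_r$ has only non-positive Fourier modes, so $a-1$ is the boundary value of a function holomorphic in the open exterior $\{|z|>r\}$, vanishing at $\infty$; together with the continuity on $\mathbb{C}\backslash\mathbb{D}_r$ this exhibits $a$ as holomorphic in $\{|z|>r\}$, continuous up to $\partial\mathbb{D}_r$, with $a(\infty)=1$. In particular $a\in L^\infty(\partial\mathbb{D}_r)$, so $T(a)$ is bounded, and $a\,H_-\subset H_-$ since multiplying a function with non-positive modes by one with strictly negative modes keeps all modes strictly negative.

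The key algebraic step is the identity, valid for any $b\in L^\infty(\partial\mathbb{D}_r)$ and any symbol $c$ with $c\,H_-\subset H_-$,
\[
T(cb)=T(c)T(b).
\]
I would prove it by the direct computation, for $f\in H_+$, of splitting $bf=\mathfrak{p}_+(bf)+\mathfrak{p}_-(bf)$, which yields
\[
T(cb)-T(c)T(b)=\mathfrak{p}_+M_c\,\mathfrak{p}_-M_b\big|_{H_+},
\]
where $M_c$ denotes multiplication by $c$. The right-hand side vanishes because $c\,H_-\subset H_-$ forces $\mathfrak{p}_+M_c\mathfrak{p}_-=0$.

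Next I would show that $a^{-1}$ satisfies the same two hypotheses. Since $a$ is continuous and non-vanishing on $\mathbb{C}\backslash\mathbb{D}_r$ and $a(\infty)=1\neq0$, it is non-vanishing on the closed exterior, which is compact in the Riemann sphere, so $|a|$ is bounded below there; hence $a^{-1}$ is holomorphic in $\{|z|>r\}$, continuous up to the boundary, bounded, non-vanishing, with $a^{-1}(\infty)=1$. Consequently $a^{-1}-1$ is holomorphic in the exterior and vanishes at $\infty$, so, by expanding it in a Laurent series about $\infty$, its boundary values have only strictly negative Fourier modes; that is, $a^{-1}-1\in H_-$, $a^{-1}\in L^\infty(\partial\mathbb{D}_r)$, and $a^{-1}H_-\subset H_-$.

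Finally I would combine the two. Applying the multiplication rule with $c=a$ gives $T(a)T(a^{-1})=T(aa^{-1})=T(1)=I$, and applying it with $c=a^{-1}$ (co-analytic by the previous step) gives $T(a^{-1})T(a)=T(1)=I$. Hence $T(a)$ is invertible with $T(a)^{-1}=T(a^{-1})$, as claimed. The main obstacle is the function-theoretic step $a^{-1}-1\in H_-$: one must know that $a$ stays bounded away from $0$ throughout the exterior, so that $1/a$ is a genuine bounded co-analytic symbol, and that passing to the reciprocal preserves both analyticity and the normalization at $\infty$. This is precisely where condition (i) and the continuity on the whole exterior, rather than merely on $\partial\mathbb{D}_r$, enter.
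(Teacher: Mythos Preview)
Your proof is correct and follows essentially the same idea as the paper's: both hinge on the fact that $a$ and $a^{-1}$ each map $H_{-}$ into $H_{-}$, so that $\mathfrak{p}_{+}M_{a}\mathfrak{p}_{-}=\mathfrak{p}_{+}M_{a^{-1}}\mathfrak{p}_{-}=0$. The paper phrases this as ``$a$ maps $H_{-}$ to $H_{-}$ bijectively'' and then performs a direct element chase (write $au=f_{+}+f_{-}$, multiply by $a^{-1}$, and project), whereas you package the same computation as the Toeplitz product rule $T(cb)=T(c)T(b)$ for co-analytic $c$ and apply it twice; your version is more explicit in verifying that $a^{-1}-1\in H_{-}$, which the paper leaves implicit.
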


\begin{proof}
Under the conditions (i), (ii) $a$ maps $H_{-}$ to $H_{-}$ bijectively. For%
\textrm{\ }$u\in H_{+}$ let%
\begin{equation*}
a(z)u(z)=f_{+}(z)+f_{-}(z)\text{ \ \ with \ }f_{\pm}\in H_{\pm}\text{.}
\end{equation*}
Then, $f_{+}=\mathfrak{p}_{+}\left( au\right) =T(a)u$, and%
\begin{equation*}
u=a^{-1}f_{+}+a^{-1}f_{-}=\mathfrak{p}_{+}\left( a^{-1}f_{+}\right)
=T(a^{-1})f_{+}=T(a^{-1})T(a)u
\end{equation*}
hence $T(a^{-1})T(a)u=u$ holds. Similarly one can prove $T(a)T(a^{-1})u=u$.$%
\medskip$
\end{proof}

An example of $a$ is given by $m$ defined in (\ref{13}) by two Weyl
functions $m_{\pm}$, and assume $m$ is holomorphic on $\mathbb{C}%
\backslash\left( \left[ -r,r\right] \cup i\left[ -r,r\right] \right) $.
Then, $T\left( m_{o}\right) $ defined on $\left\{ \left\vert z\right\vert
=s^{2}\right\} $ is invertible for any $s>r$. This is because%
\begin{equation*}
m_{o}\left( z\right) =-\dfrac{m_{+}(-z)+m_{-}(-z)}{2\sqrt{z}}\text{,}
\end{equation*}
and $m_{+}+m_{-}$ has positive imaginary part on $\mathbb{C}_{+}$.\medskip

\section{Characteristic matrix}

In this section we investigate the condition (\ref{10}) (equivalently (\ref%
{11})) by introducing characteristic matrix.

Let $H=L^{2}\left( \partial\mathbb{D}_{r}\right) $ with inner product%
\begin{equation*}
\left( f,g\right) =\dfrac{r}{2\pi}\int_{0}^{2\pi}f\left( re^{i\theta
}\right) \overline{g\left( re^{i\theta}\right) }d\theta\text{.}
\end{equation*}
Set%
\begin{equation*}
\left( J\boldsymbol{f}\right) \left( z\right) =\overline{z}\boldsymbol{f}%
\left( \overline{z}\right) \text{ \ \ for \ }\boldsymbol{f}\in \boldsymbol{H}%
\left( =H\times H\right) \text{.}
\end{equation*}
Then, $J$ maps $\boldsymbol{H}_{+}$ onto\ $\boldsymbol{H}_{-}$ and satisfies%
\begin{equation*}
J^{\ast}=J,\text{ \ }J^{2}=r^{2}I\text{ \ and \ }zJ=J\overline{z}\text{.}
\end{equation*}
Define a dual object of a closed subspace $\boldsymbol{W}$ of $\boldsymbol{H}
$ by%
\begin{equation*}
\widetilde{\boldsymbol{W}}=J\boldsymbol{W}^{\perp}\text{.}
\end{equation*}
For a closed subspace $\boldsymbol{W}\in Gr^{\left( 2\right) }$ (the correct
notation is $W\in Gr^{\left( 2\right) }$, however we abuse the notation) the
operator associating $\boldsymbol{f}_{+}\in\boldsymbol{H}_{+}$ to a unique $%
\boldsymbol{f}_{-}\in\boldsymbol{H}_{-}$ such that $\boldsymbol{f}_{+}+%
\boldsymbol{f}_{-}\in\boldsymbol{W}$ is denoted by $A_{\boldsymbol{W}}.$

\begin{lemma}
\label{l1}(i) If $\boldsymbol{W}$ satisfies $z\boldsymbol{W}\subset 
\boldsymbol{W}$, then so does $\widetilde{\boldsymbol{W}}$.\newline
(ii) If $\boldsymbol{W}$ satisfies the condition of (\ref{11}), so does $%
\widetilde {\boldsymbol{W}}$, and%
\begin{equation*}
A_{\widetilde{\boldsymbol{W}}}=-r^{-2}JA_{\boldsymbol{W}}^{\ast}J\text{.}
\end{equation*}
\end{lemma}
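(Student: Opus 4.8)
The plan is to prove the two assertions of Lemma~\ref{l1} by unwinding the definitions of $J$ and $\widetilde{\boldsymbol{W}}=J\boldsymbol{W}^{\perp}$, and exploiting the three algebraic identities $J^{\ast}=J$, $J^{2}=r^{2}I$, $zJ=J\overline{z}$ recorded just above the statement. For part (i), I would start from $z\boldsymbol{W}\subset\boldsymbol{W}$ and show $z\widetilde{\boldsymbol{W}}\subset\widetilde{\boldsymbol{W}}$. First I would note that the adjoint of multiplication by $z$ on $\boldsymbol{H}$ is multiplication by $\overline{z}$ (with respect to the inner product on $\partial\mathbb{D}_{r}$, where $z\overline{z}=r^{2}$, so $\overline{z}=r^{2}/z$ on the circle), hence the hypothesis $z\boldsymbol{W}\subset\boldsymbol{W}$ passes to the orthogonal complement as $\overline{z}\,\boldsymbol{W}^{\perp}\subset\boldsymbol{W}^{\perp}$. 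Then, applying $J$ and using $zJ=J\overline{z}$, I would compute
\begin{equation*}
z\widetilde{\boldsymbol{W}}=zJ\boldsymbol{W}^{\perp}=J\overline{z}\,\boldsymbol{W}^{\perp}\subset J\boldsymbol{W}^{\perp}=\widetilde{\boldsymbol{W}}\text{,}
\end{equation*}
which is exactly the desired shift invariance.

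\textbf{The projection statement.}

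For part (ii), the first task is to verify that $\mathfrak{p}_{+}\colon\widetilde{\boldsymbol{W}}\to\boldsymbol{H}_{+}$ is bijective, knowing this for $\boldsymbol{W}$. The key structural fact is that $J$ interchanges $\boldsymbol{H}_{+}$ and $\boldsymbol{H}_{-}$, so that if $\boldsymbol{W}$ is a graph over $\boldsymbol{H}_{+}$ (i.e. $\boldsymbol{W}=\{\boldsymbol{f}_{+}+A_{\boldsymbol{W}}\boldsymbol{f}_{+}\}$), then $\boldsymbol{W}^{\perp}$ is a graph over $\boldsymbol{H}_{-}$ and $J\boldsymbol{W}^{\perp}$ is again a graph over $\boldsymbol{H}_{+}$. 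I would make this precise by writing the orthogonality condition defining $\boldsymbol{W}^{\perp}$ and transporting it through $J$. The cleanest route is to compute $A_{\widetilde{\boldsymbol{W}}}$ directly and read off bijectivity as a consequence of the formula.

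\textbf{Computing the operator.}

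To obtain $A_{\widetilde{\boldsymbol{W}}}=-r^{-2}JA_{\boldsymbol{W}}^{\ast}J$, I would characterize $\boldsymbol{W}^{\perp}$ via the graph of $A_{\boldsymbol{W}}$. An element $\boldsymbol{f}_{+}+A_{\boldsymbol{W}}\boldsymbol{f}_{+}$ spans $\boldsymbol{W}$, so $\boldsymbol{g}\in\boldsymbol{W}^{\perp}$ iff $(\boldsymbol{g},\boldsymbol{f}_{+}+A_{\boldsymbol{W}}\boldsymbol{f}_{+})=0$ for all $\boldsymbol{f}_{+}\in\boldsymbol{H}_{+}$. Splitting $\boldsymbol{g}=\boldsymbol{g}_{+}+\boldsymbol{g}_{-}$ and using that $A_{\boldsymbol{W}}\boldsymbol{f}_{+}\in\boldsymbol{H}_{-}$, this yields $(\boldsymbol{g}_{+},\boldsymbol{f}_{+})+(\boldsymbol{g}_{-},A_{\boldsymbol{W}}\boldsymbol{f}_{+})=0$, i.e. $\boldsymbol{g}_{+}=-A_{\boldsymbol{W}}^{\ast}\boldsymbol{g}_{-}$, so $\boldsymbol{W}^{\perp}$ is the graph $\{-A_{\boldsymbol{W}}^{\ast}\boldsymbol{g}_{-}+\boldsymbol{g}_{-}\}$ over $\boldsymbol{H}_{-}$. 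Applying $J$ and using $J\boldsymbol{H}_{-}=\boldsymbol{H}_{+}$, $J\boldsymbol{H}_{+}=\boldsymbol{H}_{-}$, an element of $\widetilde{\boldsymbol{W}}$ has positive part $J\boldsymbol{g}_{-}\in\boldsymbol{H}_{+}$ and negative part $-JA_{\boldsymbol{W}}^{\ast}\boldsymbol{g}_{-}\in\boldsymbol{H}_{-}$. Setting $\boldsymbol{h}_{+}=J\boldsymbol{g}_{-}$, so that $\boldsymbol{g}_{-}=r^{-2}J\boldsymbol{h}_{+}$ by $J^{2}=r^{2}I$, the negative part becomes $-JA_{\boldsymbol{W}}^{\ast}r^{-2}J\boldsymbol{h}_{+}=-r^{-2}JA_{\boldsymbol{W}}^{\ast}J\boldsymbol{h}_{+}$, giving $A_{\widetilde{\boldsymbol{W}}}=-r^{-2}JA_{\boldsymbol{W}}^{\ast}J$ as claimed. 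Bijectivity of $\mathfrak{p}_{+}$ on $\widetilde{\boldsymbol{W}}$ then follows since $J$ is bijective between $\boldsymbol{H}_{-}$ and $\boldsymbol{H}_{+}$, making the map $\boldsymbol{h}_{+}\mapsto\boldsymbol{g}_{-}$ a bijection and showing every $\boldsymbol{h}_{+}\in\boldsymbol{H}_{+}$ is uniquely the $\mathfrak{p}_{+}$-image of an element of $\widetilde{\boldsymbol{W}}$.

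\textbf{Main obstacle.}

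I expect the main subtlety to be bookkeeping with the factors of $r^{2}$ arising from $J^{2}=r^{2}I$ and from the fact that $J$ is not an involution but squares to $r^{2}I$; one must track where $J$, $J^{-1}=r^{-2}J$, and the adjoint are applied so that the constant in front comes out as $-r^{-2}$ rather than $-r^{2}$ or $-1$. A secondary point requiring care is the identification of the adjoint of multiplication by $z$ as multiplication by $\overline{z}$ under the given circle inner product, which underlies the passage from $z\boldsymbol{W}\subset\boldsymbol{W}$ to $\overline{z}\,\boldsymbol{W}^{\perp}\subset\boldsymbol{W}^{\perp}$ in part (i); this is where the relation $zJ=J\overline{z}$ does the essential work, and I would state it explicitly to keep the computation transparent.
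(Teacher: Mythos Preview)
Your proposal is correct and follows essentially the same route as the paper's own proof: part (i) via the adjoint relation $(\overline{z}\boldsymbol{u},\boldsymbol{v})=(\boldsymbol{u},z\boldsymbol{v})$ and the intertwining $zJ=J\overline{z}$, and part (ii) via the graph description $\boldsymbol{W}^{\perp}=\{\boldsymbol{u}-A_{\boldsymbol{W}}^{\ast}\boldsymbol{u}:\boldsymbol{u}\in\boldsymbol{H}_{-}\}$ transported by $J$ with the substitution $\boldsymbol{u}=r^{-2}J(J\boldsymbol{u})$. The only cosmetic difference is that the paper argues elementwise where you argue with set inclusions, and the paper writes the key step as $J\boldsymbol{u}-JA_{\boldsymbol{W}}^{\ast}\boldsymbol{u}=J\boldsymbol{u}-r^{-2}JA_{\boldsymbol{W}}^{\ast}JJ\boldsymbol{u}$ directly.
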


\begin{proof}
For $\boldsymbol{f}=J\boldsymbol{u}\in\widetilde{\boldsymbol{W}}$ with $%
\boldsymbol{u}\in\boldsymbol{W}^{\perp}$%
\begin{equation*}
z\boldsymbol{f}=zJ\boldsymbol{u}=J\left( \overline{z}\boldsymbol{u}\right)
\end{equation*}
and for $\boldsymbol{v}\in\boldsymbol{W}$%
\begin{equation*}
\left( \overline{z}\boldsymbol{u},\text{ }\boldsymbol{v}\right) =\left( 
\boldsymbol{u},\text{ }z\boldsymbol{v}\right) =0\rightarrow\overline {z}%
\boldsymbol{u}\in\boldsymbol{W}^{\perp}
\end{equation*}
hence $J\left( \overline{z}\boldsymbol{u}\right) \in J\boldsymbol{W}^{\perp} 
$, and $z\boldsymbol{f}\in\widetilde{\boldsymbol{W}}$, which proves (i).%
\newline
It follows from $\boldsymbol{W}=\left\{ \boldsymbol{f}+A_{\boldsymbol{W}}%
\boldsymbol{f}\text{; \ }\boldsymbol{f}\in\boldsymbol{H}_{+}\right\} $ that%
\begin{equation*}
\boldsymbol{W}^{\perp}=\left\{ \boldsymbol{u}-A_{\boldsymbol{W}}^{\ast }%
\boldsymbol{u}\text{; \ }\boldsymbol{u}\in\boldsymbol{H}_{-}\right\} \text{.}
\end{equation*}
Hence%
\begin{equation*}
\widetilde{\boldsymbol{W}}\ni J\boldsymbol{u}-JA_{\boldsymbol{W}}^{\ast }%
\boldsymbol{u}=J\boldsymbol{u}-r^{-2}JA_{\boldsymbol{W}}^{\ast}JJ\boldsymbol{%
u}\text{ \ \ for }\boldsymbol{u}\in\boldsymbol{H}_{-}\text{.}
\end{equation*}
Since $J:$ $\boldsymbol{H}_{-}\rightarrow\boldsymbol{H}_{+}$ (bijective),
the above identity shows that $\widetilde{\boldsymbol{W}}$ satisfies (\ref%
{11}) and simultaneously $A_{\widetilde{\boldsymbol{W}}}=-r^{-2}JA_{%
\boldsymbol{W}}^{\ast}J$ holds.\medskip
\end{proof}

Let%
\begin{equation*}
\boldsymbol{e}_{1}=\left( 
\begin{array}{c}
1 \\ 
0%
\end{array}
\right) \text{, \ \ \ }\boldsymbol{e}_{2}=\left( 
\begin{array}{c}
0 \\ 
1%
\end{array}
\right) \in\boldsymbol{H}_{+}\text{.}
\end{equation*}
For $\boldsymbol{W}\in Gr^{\left( 2\right) }$ set%
\begin{equation*}
\boldsymbol{\varphi}_{\boldsymbol{W}}=A_{\boldsymbol{W}}\boldsymbol{e}_{1}%
\text{, \ \ }\boldsymbol{\psi}_{\boldsymbol{W}}=A_{\boldsymbol{W}}%
\boldsymbol{e}_{2}\in\boldsymbol{H}_{-}\text{,}
\end{equation*}
and define a $2\times2$ matrix by%
\begin{equation*}
\mathit{\Pi}_{\boldsymbol{W}}\left( z\right) =\left[ \boldsymbol{e}_{1}+%
\boldsymbol{\varphi}_{\boldsymbol{W}}\left( z\right) \text{, }\boldsymbol{e}%
_{2}+\boldsymbol{\psi}_{\boldsymbol{W}}\left( z\right) \right] =I+\left[ 
\boldsymbol{\varphi}_{\boldsymbol{W}}\left( z\right) \text{, }\boldsymbol{%
\psi}_{\boldsymbol{W}}\left( z\right) \right] \text{.}
\end{equation*}

\begin{proposition}
\label{p2}For $\boldsymbol{W}\in Gr^{\left( 2\right) }$ the matrices $%
\mathit{\Pi}_{\boldsymbol{W}}\left( z\right) $, $\mathit{\Pi}_{\widetilde{%
\boldsymbol{W}}}\left( z\right) $ satisfy the following properties:\newline
(i) For $\boldsymbol{f}\in\boldsymbol{H}_{+}$%
\begin{equation}
A_{\boldsymbol{W}}z\boldsymbol{f}=zA_{\boldsymbol{W}}\boldsymbol{f+}%
r^{-1}\left( J\boldsymbol{f}\text{, }\boldsymbol{\varphi}_{\widetilde {%
\boldsymbol{W}}}\right) \left( \boldsymbol{e}_{1}+\boldsymbol{\varphi }_{%
\boldsymbol{W}}\right) +r^{-1}\left( J\boldsymbol{f}\text{, }\boldsymbol{\psi%
}_{\widetilde{\boldsymbol{W}}}\right) \left( \boldsymbol{e}_{2}+\boldsymbol{%
\psi}_{\boldsymbol{W}}\right) \text{.}  \label{14}
\end{equation}
(ii) $\mathit{\Pi}_{\boldsymbol{W}}\left( z\right) $ is invertible for any $%
z $ such that $\left\vert z\right\vert >r$ and for a.e. $z$ in $\partial 
\mathbb{D}_{r}$. Moreover, every entry of $\mathit{\Pi}_{\boldsymbol{W}%
}\left( z\right) $, $\mathit{\Pi}_{\boldsymbol{W}}\left( z\right) ^{-1}$
belongs to $\overline{H_{+}}$. Additionally $\widetilde{\boldsymbol{W}}\in
Gr^{\left( 2\right) }$ is valid and it holds that%
\begin{equation}
\mathit{\Pi}_{\boldsymbol{W}}\left( z\right) \text{ }^{t}\overline {\mathit{%
\Pi}_{\widetilde{\boldsymbol{W}}}\left( \overline{z}\right) }=I\text{.}
\label{15}
\end{equation}
\end{proposition}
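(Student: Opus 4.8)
The plan is to prove (i) by a direct shift computation and to reduce (ii) to a single quadratic identity, which I then establish by orthogonality plus a moment argument.

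For (i), take $\boldsymbol{f}\in\boldsymbol{H}_{+}$ and start from $\boldsymbol{f}+A_{\boldsymbol{W}}\boldsymbol{f}\in\boldsymbol{W}$. Shift-invariance $z\boldsymbol{W}\subset\boldsymbol{W}$ gives $z\boldsymbol{f}+zA_{\boldsymbol{W}}\boldsymbol{f}\in\boldsymbol{W}$. Here $z\boldsymbol{f}\in\boldsymbol{H}_{+}$, while $zA_{\boldsymbol{W}}\boldsymbol{f}$ contributes to $\boldsymbol{H}_{+}$ exactly the constant vector $\boldsymbol{c}=c_{1}\boldsymbol{e}_{1}+c_{2}\boldsymbol{e}_{2}$, whose entries are the $z^{-1}$-coefficients of the two components of $A_{\boldsymbol{W}}\boldsymbol{f}$. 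Splitting $z\boldsymbol{f}+zA_{\boldsymbol{W}}\boldsymbol{f}$ into its $\boldsymbol{H}_{\pm}$ parts and using the defining property of $A_{\boldsymbol{W}}$ together with linearity, I read off $A_{\boldsymbol{W}}(z\boldsymbol{f})=zA_{\boldsymbol{W}}\boldsymbol{f}-c_{1}(\boldsymbol{e}_{1}+\boldsymbol{\varphi}_{\boldsymbol{W}})-c_{2}(\boldsymbol{e}_{2}+\boldsymbol{\psi}_{\boldsymbol{W}})$. It then remains to identify the coefficients. Extracting the $z^{-1}$-coefficient as an inner product against $z^{-1}\boldsymbol{e}_{l}$ and invoking $A_{\widetilde{\boldsymbol{W}}}=-r^{-2}JA_{\boldsymbol{W}}^{\ast}J$ from Lemma \ref{l1} together with $J^{\ast}=J$, $J^{2}=r^{2}I$ and $J\boldsymbol{e}_{l}=r^{2}z^{-1}\boldsymbol{e}_{l}$, I rewrite $c_{1}=-r^{-1}(J\boldsymbol{f},\boldsymbol{\varphi}_{\widetilde{\boldsymbol{W}}})$ and $c_{2}=-r^{-1}(J\boldsymbol{f},\boldsymbol{\psi}_{\widetilde{\boldsymbol{W}}})$, which turns the displayed formula into (\ref{14}).

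For (ii), note first that $\widetilde{\boldsymbol{W}}\in Gr^{(2)}$ is immediate from Lemma \ref{l1}, so $\mathit{\Pi}_{\widetilde{\boldsymbol{W}}}$ is defined. Since $\boldsymbol{\varphi}_{\boldsymbol{W}},\boldsymbol{\psi}_{\boldsymbol{W}}\in\boldsymbol{H}_{-}$ and $I$ contributes constants, the entries of $\mathit{\Pi}_{\boldsymbol{W}}=I+[\boldsymbol{\varphi}_{\boldsymbol{W}},\boldsymbol{\psi}_{\boldsymbol{W}}]$ lie in $\mathrm{span}\{z^{n}\}_{n\le 0}=\overline{H_{+}}$; equivalently they are boundary values of functions holomorphic on $\{|z|>r\}\cup\{\infty\}$ with $\mathit{\Pi}_{\boldsymbol{W}}(\infty)=I$. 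All the remaining assertions of (ii), namely the invertibility and (\ref{15}), will follow once the identity is in hand.

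The main point, and the main obstacle, is (\ref{15}); I establish it by first proving the reverse-order identity ${}^{t}\overline{\mathit{\Pi}_{\widetilde{\boldsymbol{W}}}(\overline{z})}\,\mathit{\Pi}_{\boldsymbol{W}}(z)=I$, which is directly accessible. Write $\boldsymbol{\pi}_{1}=\boldsymbol{e}_{1}+\boldsymbol{\varphi}_{\boldsymbol{W}}$, $\boldsymbol{\pi}_{2}=\boldsymbol{e}_{2}+\boldsymbol{\psi}_{\boldsymbol{W}}$ for the columns of $\mathit{\Pi}_{\boldsymbol{W}}$ and $\widetilde{\boldsymbol{\pi}}_{k}$ for those of $\mathit{\Pi}_{\widetilde{\boldsymbol{W}}}$; then $\boldsymbol{\pi}_{j}\in\boldsymbol{W}$, while $\widetilde{\boldsymbol{\pi}}_{k}\in\widetilde{\boldsymbol{W}}=J\boldsymbol{W}^{\perp}$ gives $J\widetilde{\boldsymbol{\pi}}_{k}\in J^{2}\boldsymbol{W}^{\perp}=\boldsymbol{W}^{\perp}$, and since $\boldsymbol{W}^{\perp}$ is invariant under multiplication by $\overline{z}=r^{2}z^{-1}$ (the computation in the proof of Lemma \ref{l1}(i)), also $z^{-n}J\widetilde{\boldsymbol{\pi}}_{k}\in\boldsymbol{W}^{\perp}$ for every $n\ge 0$. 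The orthogonality $(\boldsymbol{\pi}_{j},z^{-n}J\widetilde{\boldsymbol{\pi}}_{k})=0$, rewritten as a contour integral over $\partial\mathbb{D}_{r}$ via the definition of $J$, becomes $\int_{\partial\mathbb{D}_{r}}H_{kj}(z)\,z^{n}\,dz=0$ for all $n\ge 0$, where $H_{kj}(z)=\boldsymbol{\pi}_{j}(z)\cdot\overline{\widetilde{\boldsymbol{\pi}}_{k}(\overline{z})}$ is the $(k,j)$-entry of ${}^{t}\overline{\mathit{\Pi}_{\widetilde{\boldsymbol{W}}}(\overline{z})}\,\mathit{\Pi}_{\boldsymbol{W}}(z)$. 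Each $H_{kj}$ is holomorphic on $\{|z|>r\}\cup\{\infty\}$ with $H_{kj}(\infty)=\boldsymbol{e}_{j}\cdot\boldsymbol{e}_{k}=\delta_{jk}$, so $H_{kj}-\delta_{jk}\in H_{-}$; the vanishing of all moments $\int_{\partial\mathbb{D}_{r}}(H_{kj}-\delta_{jk})z^{n}\,dz$ for $n\ge 0$ forces every negative Fourier coefficient to vanish, whence $H_{kj}=\delta_{jk}$ and ${}^{t}\overline{\mathit{\Pi}_{\widetilde{\boldsymbol{W}}}(\overline{z})}\,\mathit{\Pi}_{\boldsymbol{W}}(z)=I$. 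As these are $2\times 2$ matrices at each fixed $z$, the opposite product is also $I$, which is precisely (\ref{15}).

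Finally, (\ref{15}) yields $\mathit{\Pi}_{\boldsymbol{W}}(z)^{-1}={}^{t}\overline{\mathit{\Pi}_{\widetilde{\boldsymbol{W}}}(\overline{z})}$; this holds a.e. on $\partial\mathbb{D}_{r}$, and since both factors are holomorphic on $\{|z|>r\}$ with product $I$, it holds for every $|z|>r$ as well, giving the asserted invertibility. The entries of the inverse have the form $\overline{\widetilde{\boldsymbol{\pi}}_{k}(\overline{z})}\in\mathrm{span}\{z^{n}\}_{n\le 0}=\overline{H_{+}}$, which completes (ii). I expect the moment step in the third paragraph, combined with the exterior holomorphy that pins down $H_{kj}-\delta_{jk}\in H_{-}$, to be the substantive part; parts (i), the $\overline{H_{+}}$-membership, and the passage from the reverse product to (\ref{15}) are then routine.
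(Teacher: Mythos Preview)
Your proof is correct. Part (i) follows the same shift-and-split computation as the paper, identifying the constant term $\mathfrak{p}_{+}zA_{\boldsymbol{W}}\boldsymbol{f}$ via Lemma~\ref{l1}. Part (ii), however, takes a genuinely different route. The paper proves (\ref{15}) by iterating the recursion (\ref{14}) on $\boldsymbol{f}=z^{n}\boldsymbol{e}_{j}$ to obtain an explicit formula
\[
z^{-n}[A_{\boldsymbol{W}}z^{n}\boldsymbol{e}_{1},\,A_{\boldsymbol{W}}z^{n}\boldsymbol{e}_{2}]=\mathit{\Pi}_{\boldsymbol{W}}(z)\,{}^{t}\overline{\mathit{\Pi}_{\widetilde{\boldsymbol{W}},n}(\overline{z})}-I
\]
with $\mathit{\Pi}_{\widetilde{\boldsymbol{W}},n}$ a partial-sum truncation of $\mathit{\Pi}_{\widetilde{\boldsymbol{W}}}$, and then lets $n\to\infty$ using the bound $\|A_{\boldsymbol{W}}z^{n}\boldsymbol{e}_{j}\|\le r^{n+1/2}\|A_{\boldsymbol{W}}\|$ to kill the left side for $|z|>r$. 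Your argument bypasses (i) entirely, reading (\ref{15}) directly from the duality $\widetilde{\boldsymbol{W}}=J\boldsymbol{W}^{\perp}$: the pairing $(\boldsymbol{\pi}_{j},z^{-n}J\widetilde{\boldsymbol{\pi}}_{k})=0$ for all $n\ge 0$ is exactly the vanishing of the negative Fourier coefficients of the entries of ${}^{t}\overline{\mathit{\Pi}_{\widetilde{\boldsymbol{W}}}(\overline{z})}\,\mathit{\Pi}_{\boldsymbol{W}}(z)-I$. This is more conceptual and shows that (\ref{15}) is really a reformulation of orthogonality, independent of the recursion; the paper's approach, by contrast, exhibits how the recursion (\ref{14}) generates $\mathit{\Pi}_{\widetilde{\boldsymbol{W}}}$ coefficient by coefficient. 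One small caution: your phrase ``$H_{kj}-\delta_{jk}\in H_{-}$'' is slightly loose, since $H_{kj}$ is a priori only a product of two $L^{2}$ boundary values and hence merely $L^{1}$ on $\partial\mathbb{D}_{r}$; but the moment identification goes through verbatim either by expanding the inner product in Fourier coefficients, or by evaluating the contour integral on $|z|=\rho>r$ where both factors are bounded, so no real gap arises.
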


\begin{proof}
For $\boldsymbol{f}\in\boldsymbol{H}_{+}$%
\begin{equation*}
\boldsymbol{W}\ni z\boldsymbol{f}+zA_{\boldsymbol{W}}\boldsymbol{f}=\left( z%
\boldsymbol{f}+\mathfrak{p}_{+}zA_{\boldsymbol{W}}\boldsymbol{f}\right) +%
\mathfrak{p}_{-}zA_{\boldsymbol{W}}\boldsymbol{f}\text{,}
\end{equation*}
hence%
\begin{equation*}
A_{\boldsymbol{W}}\left( z\boldsymbol{f}+\mathfrak{p}_{+}zA_{\boldsymbol{W}}%
\boldsymbol{f}\right) =\mathfrak{p}_{-}zA_{\boldsymbol{W}}\boldsymbol{f}=zA_{%
\boldsymbol{W}}\boldsymbol{f}-\mathfrak{p}_{+}zA_{\boldsymbol{W}}\boldsymbol{%
f}\text{,}
\end{equation*}
thus%
\begin{equation}
A_{\boldsymbol{W}}z\boldsymbol{f}=zA_{\boldsymbol{W}}\boldsymbol{f}-%
\mathfrak{p}_{+}zA_{\boldsymbol{W}}\boldsymbol{f}-A_{\boldsymbol{W}}%
\mathfrak{p}_{+}zA_{\boldsymbol{W}}\boldsymbol{f}\text{.}  \label{16}
\end{equation}
Since, for $\boldsymbol{u}=A_{\boldsymbol{W}}\boldsymbol{f}\in\boldsymbol{H}%
_{-}$%
\begin{equation*}
\boldsymbol{u}=\sum_{n\geq1}\boldsymbol{u}_{n}z^{-n}\rightarrow\mathfrak{p}%
_{+}\left( z\boldsymbol{u}\right) =\boldsymbol{u}_{1}=r^{-1}\left( z%
\boldsymbol{u},\boldsymbol{e}_{1}\right) \boldsymbol{e}_{1}+r^{-1}\left( z%
\boldsymbol{u},\boldsymbol{e}_{2}\right) \boldsymbol{e}_{2}
\end{equation*}%
\begin{align*}
\mathfrak{p}_{+}\left( z\boldsymbol{u}\right) & =r^{-1}\left( \boldsymbol{u}%
\text{, }\overline{z}\boldsymbol{e}_{1}\right) \boldsymbol{e}%
_{1}+r^{-1}\left( \boldsymbol{u}\text{, }\overline{z}\boldsymbol{e}%
_{2}\right) \boldsymbol{e}_{2} \\
& =r^{-1}\left( A_{\boldsymbol{W}}\boldsymbol{f}\text{, }\overline {z}%
\boldsymbol{e}_{1}\right) \boldsymbol{e}_{1}+r^{-1}\left( A_{\boldsymbol{W}}%
\boldsymbol{f}\text{, }\overline{z}\boldsymbol{e}_{2}\right) \boldsymbol{e}%
_{2} \\
& =r^{-3}\left( J\boldsymbol{f}\text{, }JA_{\boldsymbol{W}}^{\ast }J%
\boldsymbol{e}_{1}\right) \boldsymbol{e}_{1}+r^{-3}\left( J\boldsymbol{f}%
\text{, }JA_{\boldsymbol{W}}^{\ast}J\boldsymbol{e}_{2}\right) \boldsymbol{e}%
_{2} \\
& =-r^{-1}\left( J\boldsymbol{f}\text{, }A_{\widetilde{\boldsymbol{W}}}%
\boldsymbol{e}_{1}\right) \boldsymbol{e}_{1}-r^{-1}\left( J\boldsymbol{f}%
\text{, }A_{\widetilde{\boldsymbol{W}}}\boldsymbol{e}_{2}\right) \boldsymbol{%
e}_{2} \\
& =-r^{-1}\left( J\boldsymbol{f}\text{, }\boldsymbol{\varphi}_{\widetilde {%
\boldsymbol{W}}}\right) \boldsymbol{e}_{1}-r^{-1}\left( J\boldsymbol{f}\text{%
, }\boldsymbol{\psi}_{\widetilde{\boldsymbol{W}}}\right) \boldsymbol{e}_{2}%
\text{,}
\end{align*}
thus (\ref{14}) follows from (\ref{16}).\newline
Since $\widetilde {\boldsymbol{W}}\in Gr^{\left( 2\right) }$ follows from
Lemma\ref{l1}, we have only to show (\ref{15}). Applying (\ref{16}) to $%
\boldsymbol{f}=z^{n}\boldsymbol{e}_{1}$ $\left( n\geq0\right) $ yields%
\begin{align*}
& A_{\boldsymbol{W}}z^{n+1}\boldsymbol{e}_{1} \\
& =zA_{\boldsymbol{W}}z^{n}\boldsymbol{e}_{1}+r^{-1}\left( Jz^{n}\boldsymbol{%
e}_{1}\text{, }\boldsymbol{\varphi}_{\widetilde{\boldsymbol{W}}}\right)
\left( \boldsymbol{e}_{1}+\boldsymbol{\varphi}_{\boldsymbol{W}}\right)
+r^{-1}\left( Jz^{n}\boldsymbol{e}_{1}\text{, }\boldsymbol{\psi }_{%
\widetilde{\boldsymbol{W}}}\right) \left( \boldsymbol{e}_{2}+\boldsymbol{\psi%
}_{\boldsymbol{W}}\right) \\
& =zA_{\boldsymbol{W}}z^{n}\boldsymbol{e}_{1}+r^{-1}\left( \overline {z}%
^{n+1}\boldsymbol{e}_{1}\text{, }\boldsymbol{\varphi}_{\widetilde {%
\boldsymbol{W}}}\right) \left( \boldsymbol{e}_{1}+\boldsymbol{\varphi }_{%
\boldsymbol{W}}\right) +r^{-1}\left( \overline{z}^{n+1}\boldsymbol{e}_{1}%
\text{, }\boldsymbol{\psi}_{\widetilde{\boldsymbol{W}}}\right) \left( 
\boldsymbol{e}_{2}+\boldsymbol{\psi}_{\boldsymbol{W}}\right) \text{,}
\end{align*}
similarly%
\begin{align*}
& A_{\boldsymbol{W}}z^{n+1}\boldsymbol{e}_{2} \\
& =zA_{\boldsymbol{W}}z^{n}\boldsymbol{e}_{2}+r^{-1}\left( \overline {z}%
^{n+1}\boldsymbol{e}_{2}\text{, }\boldsymbol{\varphi}_{\widetilde {%
\boldsymbol{W}}}\right) \left( \boldsymbol{e}_{1}+\boldsymbol{\varphi }_{%
\boldsymbol{W}}\right) +r^{-1}\left( \overline{z}^{n+1}\boldsymbol{e}_{2}%
\text{, }\boldsymbol{\psi}_{\widetilde{\boldsymbol{W}}}\right) \left( 
\boldsymbol{e}_{2}+\boldsymbol{\psi}_{\boldsymbol{W}}\right) \text{,}
\end{align*}
hence, for $j=1,2$%
\begin{equation}
A_{\boldsymbol{W}}z^{n}\boldsymbol{e}_{j}=z^{n}\left( A_{\boldsymbol{W}}%
\boldsymbol{e}_{j}+\left( \boldsymbol{e}_{1}+\boldsymbol{\varphi }_{%
\boldsymbol{W}}\right) \overline{\boldsymbol{\varphi}}_{\widetilde {%
\boldsymbol{W}},j,n}+\left( \boldsymbol{e}_{2}+\boldsymbol{\psi }_{%
\boldsymbol{W}}\right) \overline{\boldsymbol{\psi}}_{\widetilde {\boldsymbol{%
W}},j,n}\right) \text{,}  \label{17}
\end{equation}
holds, where%
\begin{equation*}
\boldsymbol{\varphi}_{\widetilde{\boldsymbol{W}},j,n}\left( z\right)
=r^{-1}\sum_{k=1}^{n}z^{-k}\left( \boldsymbol{\varphi}_{\widetilde {%
\boldsymbol{W}}}\text{, }\overline{z}^{k}\boldsymbol{e}_{j}\right) \text{, \ 
}\boldsymbol{\psi}_{\widetilde{\boldsymbol{W}},j,n}\left( z\right)
=r^{-1}\sum_{k=1}^{n}z^{-k}\left( \boldsymbol{\psi}_{\widetilde {\boldsymbol{%
W}}}\text{, }\overline{z}^{k}\boldsymbol{e}_{j}\right) \text{,}
\end{equation*}
and generally $\overline{f}\left( z\right) =\overline{f\left( \overline {z}%
\right) }$. In a matrix form the identity (\ref{17}) turns out to be%
\begin{equation*}
z^{-n}\left[ A_{\boldsymbol{W}}z^{n}\boldsymbol{e}_{1}\text{, }A_{%
\boldsymbol{W}}z^{n}\boldsymbol{e}_{2}\right] =\mathit{\Pi}_{\boldsymbol{W}%
}\left( z\right) \text{ }^{t}\overline{\mathit{\Pi }_{\widetilde{\boldsymbol{%
W}},n}\left( \overline{z}\right) }-I
\end{equation*}
\ for any $n\geq0$, where%
\begin{equation*}
\mathit{\Pi}_{\widetilde{\boldsymbol{W}},n}\left( z\right) =\left[ 
\begin{array}{cc}
1+\boldsymbol{\varphi}_{\widetilde{\boldsymbol{W}},1,n}\left( z\right) & 
\boldsymbol{\psi}_{\widetilde{\boldsymbol{W}},1,n}\left( z\right) \\ 
\boldsymbol{\varphi}_{\widetilde{\boldsymbol{W}},2,n}\left( z\right) & 1+%
\boldsymbol{\psi}_{\widetilde{\boldsymbol{W}},2,n}\left( z\right)%
\end{array}
\right] \text{.}
\end{equation*}
Noting%
\begin{equation*}
\left( 
\begin{array}{c}
\boldsymbol{\varphi}_{\widetilde{\boldsymbol{W}},1,n}\left( z\right) \\ 
\boldsymbol{\varphi}_{\widetilde{\boldsymbol{W}},2,n}\left( z\right)%
\end{array}
\right) \rightarrow\boldsymbol{\varphi}_{\widetilde{\boldsymbol{W}}}\left(
z\right) \text{, \ }\left( 
\begin{array}{c}
\boldsymbol{\psi}_{\widetilde{\boldsymbol{W}},1,n}\left( z\right) \\ 
\boldsymbol{\psi}_{\widetilde{\boldsymbol{W}},2,n}\left( z\right)%
\end{array}
\right) \rightarrow\boldsymbol{\psi}_{\widetilde{\boldsymbol{W}}}\left(
z\right) \text{ in }\boldsymbol{H}_{-}
\end{equation*}
as $n\rightarrow\infty$ and%
\begin{equation*}
\left\Vert A_{\boldsymbol{W}}z^{n}\boldsymbol{e}_{1}\right\Vert
\leq\left\Vert A_{\boldsymbol{W}}\right\Vert \left\Vert z^{n}\boldsymbol{e}%
_{1}\right\Vert =r^{n+1/2}\left\Vert A_{\boldsymbol{W}}\right\Vert \text{,}
\end{equation*}
we see%
\begin{equation*}
\mathit{\Pi}_{\boldsymbol{W}}\left( z\right) \text{ }^{t}\overline {\mathit{%
\Pi}_{\widetilde{\boldsymbol{W}}}\left( \overline{z}\right) }-I=0\text{ \
for }z\text{ such that }\left\vert z\right\vert >r\text{,}
\end{equation*}
by letting $n\rightarrow\infty$ in (\ref{17}), which completes the proof of (%
\ref{15}) by letting $\left\vert z\right\vert \rightarrow r$.\medskip
\end{proof}

The identities (\ref{14}), (\ref{15}) show that the operator $A_{\boldsymbol{%
W}}$ is uniquely determined by $\left\{ \boldsymbol{\varphi }_{\boldsymbol{W}%
}\text{, }\boldsymbol{\psi}_{\boldsymbol{W}}\right\} $, since $\boldsymbol{H}%
_{+}$ is generated by $\left\{ z^{m}\boldsymbol{e}_{1}\text{, }z^{n}%
\boldsymbol{e}_{2}\right\} _{m,n\geq0}$. This implies that $\boldsymbol{W}%
\in Gr^{\left( 2\right) }$ is uniquely determined by $\mathit{\Pi}_{%
\boldsymbol{W}}$, and we call $\mathit{\Pi}_{\boldsymbol{W}}$ as the \emph{%
characteristic matrix} of $\boldsymbol{W}$ (or $W$).

\section{Group action on $Gr^{\left( 2\right) }$ and $\protect\tau$-function}

In this section we consider an commutative action on $Gr^{\left( 2\right) }$%
. Set%
\begin{equation}
\mathit{\Gamma}=\left\{ g=e^{h}\text{; }h\text{ holomorphic on }\mathbb{D}%
_{s}\text{ for some }s>r\text{ and }h(0)=0\right\} \text{.}  \label{46}
\end{equation}
Then $\mathit{\Gamma}$ is commutative and we can consider a closed subspace $%
gW$ for $W\in Gr^{\left( 2\right) }$. To investigate this action we define
an operator $R_{W}$ on $H_{+}$ for $W\in Gr^{\left( 2\right) }$ by%
\begin{equation*}
R_{W}\left( g\right) =g^{-1}\mathfrak{p}_{+}gA_{W}\text{.}
\end{equation*}
Then, the $\tau$-function is $\tau_{W}(g)=\det\left( I+R_{W}\left( g\right)
\right) $. To define this determinant the traceability of $R_{W}\left(
g\right) $ is required.

\begin{lemma}
\label{l2}Suppose $g_{j}\in\mathit{\Gamma}$ for $j=1,2$. Then%
\begin{align*}
& \left\Vert g_{1}^{-1}\mathfrak{p}_{+}g_{1}-g_{2}^{-1}\mathfrak{p}%
_{+}g_{2}\right\Vert _{trace} \\
& \leq3r^{-1/2}\left( 
\begin{array}{c}
\left\Vert g_{1}^{-1}-g_{2}^{-1}\right\Vert \left( \left\Vert
g_{1}-1\right\Vert +r^{2}\left\Vert g_{1}^{\prime\prime}\right\Vert \right)
\\ 
+\left\Vert g_{2}^{-1}\right\Vert \left( \left\Vert g_{1}-g_{2}\right\Vert
+r^{2}\left\Vert g_{1}^{\prime\prime}-g_{2}^{\prime\prime}\right\Vert \right)%
\end{array}
\right) \text{.}
\end{align*}
\end{lemma}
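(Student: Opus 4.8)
The plan is to reduce the trace-norm bound to a single-symbol estimate for Hankel operators and then to prove that estimate by exhibiting the Hankel operator as a continuous superposition of rank-one operators, where the two derivatives are exactly what buys trace class.

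First I would record two structural facts about $g=e^{h}\in\mathit{\Gamma}$. Since $h(0)=0$ and $h$ is holomorphic on a disc $\mathbb{D}_{s}$ with $s>r$, both $g$ and $g^{-1}=e^{-h}$ have power-series expansions in nonnegative powers of $z$ with constant term $1$, so multiplication by $g^{\pm1}$ maps $H_{+}$ into $H_{+}$. Hence $g^{-1}\mathfrak{p}_{+}g$ acts as the identity on $H_{+}$ (for $f\in H_{+}$ one has $\mathfrak{p}_{+}(gf)=gf$ and $g^{-1}gf=f$), so the difference $g_{1}^{-1}\mathfrak{p}_{+}g_{1}-g_{2}^{-1}\mathfrak{p}_{+}g_{2}$ vanishes on $H_{+}$ and is carried entirely by its restriction to $H_{-}$. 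On $H_{-}$ one has $g^{-1}\mathfrak{p}_{+}g=g^{-1}\Gamma_{g}$, where $\Gamma_{g}\colon H_{-}\to H_{+}$, $\Gamma_{g}u=\mathfrak{p}_{+}(gu)$, is the Hankel operator with symbol $g$; and since $\mathfrak{p}_{+}u=0$ for $u\in H_{-}$ we may replace $g$ by $g-1$, i.e. $\Gamma_{g}=\Gamma_{g-1}$. This is the origin of the factors $\left\Vert g_{1}-1\right\Vert$ in the statement.

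Next I would telescope,
\begin{equation*}
g_{1}^{-1}\Gamma_{g_{1}}-g_{2}^{-1}\Gamma_{g_{2}}=\left( g_{1}^{-1}-g_{2}^{-1}\right) \Gamma_{g_{1}}+g_{2}^{-1}\left( \Gamma_{g_{1}}-\Gamma_{g_{2}}\right) \text{,}
\end{equation*}
use $\Gamma_{g_{1}}-\Gamma_{g_{2}}=\Gamma_{g_{1}-g_{2}}$, and apply the ideal inequality $\left\Vert MN\right\Vert _{trace}\leq\left\Vert M\right\Vert \left\Vert N\right\Vert _{trace}$ with $M$ the multiplication operators by $g_{1}^{-1}-g_{2}^{-1}$ and $g_{2}^{-1}$ (whose operator norms are bounded by the corresponding $\left\Vert \cdot\right\Vert $, as multiplication preserves $H_{+}$). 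This reduces everything to the single estimate
\begin{equation*}
\left\Vert \Gamma_{a}\right\Vert _{trace}\leq3r^{-1/2}\left( \left\Vert a\right\Vert +r^{2}\left\Vert a^{\prime\prime}\right\Vert \right) \text{, \ \ }a\text{ holomorphic near }\overline{\mathbb{D}_{r}}\text{, }a(0)=0\text{,}
\end{equation*}
applied to $a=g_{1}-1$ and $a=g_{1}-g_{2}$; summing the two contributions reproduces the claimed inequality with the single prefactor $3r^{-1/2}$.

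The heart of the matter is this last estimate. In the orthonormal basis $z^{n}/r^{n+1/2}$ the normalization $\left\Vert z^{n}\right\Vert =r^{n+1/2}$ turns $\Gamma_{a}$ into the Hankel matrix $\left[ a_{k+j}r^{k+j}\right] _{k\geq0,\,j\geq1}$ (with $a(z)=\sum_{n}a_{n}z^{n}$), whose entries are the Fourier coefficients $\widehat{\alpha}(k+j)$ of the boundary function $\alpha(\theta)=a(re^{i\theta})$. I would then write, for $n\geq1$, $\widehat{\alpha}(n)=-n^{-2}\widehat{\alpha^{\prime\prime}}(n)$, insert $n^{-2}=\int_{0}^{\infty}ue^{-nu}\,du$ and the Fourier representation of $\widehat{\alpha^{\prime\prime}}$, and display
\begin{equation*}
\Gamma_{a}=-\int_{0}^{\infty}\!\!\int_{0}^{2\pi}\frac{u}{2\pi}\,\alpha^{\prime\prime}(\theta)\,P(u,\theta)\otimes\overline{Q(u,\theta)}\;d\theta\,du\text{,}
\end{equation*}
where $P(u,\theta)_{k}=e^{-k(u+i\theta)}$ $(k\geq0)$, $Q(u,\theta)_{j}=e^{-j(u+i\theta)}$ $(j\geq1)$ lie in $\ell^{2}$ for $u>0$ and $P\otimes\overline{Q}$ is the rank-one operator of trace norm $\left\Vert P\right\Vert \left\Vert Q\right\Vert =(2\sinh u)^{-1}$. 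Integrating gives $\left\Vert \Gamma_{a}\right\Vert _{trace}\leq\bigl(\int_{0}^{\infty}\tfrac{u}{2\sinh u}\,du\bigr)\tfrac{1}{2\pi}\int_{0}^{2\pi}\left\vert \alpha^{\prime\prime}\right\vert d\theta=\tfrac{\pi^{2}}{8}\cdot\tfrac{1}{2\pi}\int_{0}^{2\pi}\left\vert \alpha^{\prime\prime}\right\vert d\theta$. Finally I would insert $\alpha^{\prime\prime}=-za^{\prime}-z^{2}a^{\prime\prime}$, bound $\tfrac{1}{2\pi}\int\left\vert \cdot\right\vert \leq(\tfrac{1}{2\pi}\int\left\vert \cdot\right\vert ^{2})^{1/2}=r^{-1/2}\left\Vert \cdot\right\Vert $ by Cauchy--Schwarz, and convert the resulting $r^{1/2}\left\Vert a^{\prime}\right\Vert $ term into admissible ones via Parseval and the elementary inequality $n\leq1+n(n-1)$ $(n\geq1)$; the surviving constants in front of $\left\Vert a\right\Vert $ and $r^{2}\left\Vert a^{\prime\prime}\right\Vert $ are $\tfrac{\pi^{2}}{8}$ and $\tfrac{\pi^{2}}{4}$, both below $3$.

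The step I expect to be the real obstacle is precisely this passage from boundedness to trace class with an explicit constant: a Hankel operator with a merely bounded symbol need not even be compact, so one must genuinely consume $a^{\prime\prime}$, and the kernel $n^{-2}=\int_{0}^{\infty}ue^{-nu}\,du$ must be chosen so that the associated $\ell^{2}$ profiles $P,Q$ yield a weight $u/(2\sinh u)$ that is integrable at both ends of $(0,\infty)$. The remaining delicacy is the bookkeeping between $\alpha^{\prime\prime}$ (a $\theta$-derivative) and $a^{\prime\prime}$ (a $z$-derivative), which must be arranged so that only $\left\Vert a\right\Vert $ and $r^{2}\left\Vert a^{\prime\prime}\right\Vert $ remain. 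Everything else, namely the telescoping and the operator-ideal inequalities, is routine.
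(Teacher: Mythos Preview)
Your argument is correct, and its overall architecture coincides with the paper's: both proofs observe that $g^{-1}\mathfrak{p}_{+}g$ is the identity on $H_{+}$, restrict attention to the Hankel-type operator $\mathfrak{p}_{+}g\colon H_{-}\to H_{+}$, and use the same telescoping $g_{1}^{-1}\mathfrak{p}_{+}g_{1}-g_{2}^{-1}\mathfrak{p}_{+}g_{2}=(g_{1}^{-1}-g_{2}^{-1})\mathfrak{p}_{+}g_{1}+g_{2}^{-1}(\mathfrak{p}_{+}g_{1}-\mathfrak{p}_{+}g_{2})$ together with the operator-ideal inequality. The genuine difference is in how the single-symbol estimate $\left\Vert \Gamma_{a}\right\Vert_{trace}\leq Cr^{-1/2}\bigl(\left\Vert a\right\Vert+r^{2}\left\Vert a^{\prime\prime}\right\Vert\bigr)$ is obtained. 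The paper factors the operator as a product of two Hilbert--Schmidt operators, $\Gamma_{a}=(1+\partial_{\theta})^{-1}\cdot(1+\partial_{\theta})\Gamma_{a}$, and bounds each factor separately by direct Fourier-coefficient calculations: $\left\Vert (1+\partial_{\theta})^{-1}\right\Vert_{HS}^{2}=\sum_{k\geq0}(1+k^{2})^{-1}<4$ and $\left\Vert (1+\partial_{\theta})\Gamma_{a}\right\Vert_{HS}^{2}\leq\sum_{k\geq1}k^{3}r^{2k}\left\vert a_{k}\right\vert^{2}$, the latter then being dominated by $2r^{-1}\bigl(\left\Vert a\right\Vert^{2}+r^{4}\left\Vert a^{\prime\prime}\right\Vert^{2}\bigr)$ via $k^{3}\leq k+2k^{2}(k-1)^{2}$. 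Your route, realizing $\Gamma_{a}$ as a Bochner integral of rank-one operators through the kernel $n^{-2}=\int_{0}^{\infty}ue^{-nu}\,du$ and then integrating the explicit trace norms $(2\sinh u)^{-1}$, is more conceptual and makes the provenance of the constant $\pi^{2}/8$ transparent; the paper's $HS\times HS$ factorization is more elementary (no Bochner integrals, just Parseval) and perhaps more robust if one later wants to perturb the setting. Both land on constants below $3$. One small remark: your inequality $n\leq1+n(n-1)$ is not quite what is needed for the final conversion of $r\left\Vert a^{\prime}\right\Vert$; the cleaner bound is $n^{2}\leq1+n^{2}(n-1)^{2}$ for $n\geq1$, which gives $r\left\Vert a^{\prime}\right\Vert\leq\left\Vert a\right\Vert+r^{2}\left\Vert a^{\prime\prime}\right\Vert$ directly and yields exactly the constants $\pi^{2}/8$ and $\pi^{2}/4$ you quote.
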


\begin{proof}
For $f\in H_{-}$ and $g_{j}\in H^{2}\left( \partial\mathbb{D}_{r}\right) $
let%
\begin{equation*}
f\left( z\right) =\sum_{n\geq1}f_{n}z^{-n}\text{, }g_{j}\left( z\right)
=\sum_{n\geq0}g_{jn}z^{n}\text{.}
\end{equation*}
Then%
\begin{align*}
& \partial_{\theta}\left( \mathfrak{p}_{+}g_{1}f-\mathfrak{p}%
_{+}g_{2}f\right) \left( re^{i\theta}\right) \\
& =i\sum_{m\geq1,n-m\geq0}\left( n-m\right) r^{n-m}\left(
g_{1,n}-g_{2,n}\right) f_{m}e^{i\left( n-m\right) \theta} \\
& =i\sum_{m\geq1}f_{m}\sum_{k\geq0}kr^{k}e^{ik\theta}\left(
g_{1,k+m}-g_{2,k+m}\right) \text{,}
\end{align*}
hence%
\begin{align*}
& \left\Vert \left( 1+\partial_{\theta}\right) \left( \mathfrak{p}_{+}g_{1}-%
\mathfrak{p}_{+}g_{2}\right) \right\Vert _{HS}^{2} \\
& =\sum_{m\geq1}\dfrac{r}{2\pi}\int_{0}^{2\pi}\left\vert r^{m-1/2}\sum
_{k\geq0}\left( 1+ik\right) r^{k}e^{ik\theta}\left(
g_{1,k+m}-g_{2,k+m}\right) \right\vert ^{2}d\theta \\
& =\sum_{m\geq1}\sum_{k\geq m}r^{2k}\left( 1+\left( k-m\right) ^{2}\right)
\left\vert g_{1,k}-g_{2,k}\right\vert
^{2}\leq\sum_{k\geq1}r^{2k}k^{3}\left\vert g_{1,k}-g_{2,k}\right\vert ^{2}%
\text{,}
\end{align*}
since $\sum_{m=1}^{k}\left( 1+\left( k-m\right) ^{2}\right) \leq k^{3}$
holds if $k\geq1$. Note%
\begin{align*}
& \sum_{k\geq1}r^{2k}k^{3}\left\vert g_{1,k}-g_{2,k}\right\vert ^{2} \\
& \leq\left\vert rg_{1,1}-rg_{2,1}\right\vert ^{2}+2\sum_{k\geq2}k^{2}\left(
k-1\right) ^{2}\left\vert r^{k}g_{1,k}-r^{k}g_{2,k}\right\vert ^{2} \\
& \leq2r^{-1}\left( \left\Vert g_{1}-g_{2}\right\Vert ^{2}+r^{4}\left\Vert
g_{1}^{\prime\prime}-g_{2}^{\prime\prime}\right\Vert ^{2}\right) \text{.}
\end{align*}
Since%
\begin{equation*}
\left\Vert \left( 1+\partial_{\theta}\right) ^{-1}\right\Vert
_{HS}^{2}=\sum_{k\geq0}\left\vert 1+ik\right\vert ^{-2}\leq1+\dfrac{\pi^{2}}{%
6}<4<\infty\text{,}
\end{equation*}
thus%
\begin{align*}
& \left\Vert \mathfrak{p}_{+}g_{1}-\mathfrak{p}_{+}g_{2}\right\Vert _{trace}
\\
& \leq\left\Vert \left( 1+\partial_{\theta}\right) ^{-1}\right\Vert
_{HS}\left\Vert \left( 1+\partial_{\theta}\right) \left( \mathfrak{p}%
_{+}g_{1}-\mathfrak{p}_{+}g_{2}\right) \right\Vert _{HS} \\
& \leq2\sqrt{2}r^{-1/2}\left( \left\Vert g_{1}-g_{2}\right\Vert
+r^{2}\left\Vert g_{1}^{\prime\prime}-g_{2}^{\prime\prime}\right\Vert
\right) \text{.}
\end{align*}
Consequently%
\begin{align*}
& \left\Vert g_{1}^{-1}\mathfrak{p}_{+}g_{1}-g_{2}^{-1}\mathfrak{p}%
_{+}g_{2}\right\Vert _{trace} \\
& \leq\left\Vert g_{1}^{-1}-g_{2}^{-1}\right\Vert \left\Vert \mathfrak{p}%
_{+}g_{1}\right\Vert _{trace}+\left\Vert g_{2}^{-1}\right\Vert \left\Vert 
\mathfrak{p}_{+}g_{1}-\mathfrak{p}_{+}g_{2}\right\Vert _{trace} \\
& \leq3r^{-1/2}\left( 
\begin{array}{c}
\left\Vert g_{1}^{-1}-g_{2}^{-1}\right\Vert \left( \left\Vert
g_{1}-1\right\Vert +r^{2}\left\Vert g_{1}^{\prime\prime}\right\Vert \right)
\\ 
+\left\Vert g_{2}^{-1}\right\Vert \left( \left\Vert g_{1}-g_{2}\right\Vert
+r^{2}\left\Vert g_{1}^{\prime\prime}-g_{2}^{\prime\prime}\right\Vert \right)%
\end{array}
\right) \text{,}
\end{align*}
which completes the proof.\medskip
\end{proof}

This Lemma shows that $R_{W}\left( g\right) $ is of trace class and $%
\tau_{W}(g)$ can be defined if $g\in\mathit{\Gamma}$. This $\tau_{W}(g)$ is
called as $\tau$-\emph{function} and plays a crucial role in Sato's theory.

\begin{lemma}
\label{l3}$gW\in Gr^{(2)}$ holds if and only if $\ker\left( I+R_{W}\left(
g\right) \right) =\left\{ 0\right\} $ is valid. In this case, the $A$%
-operator corresponding to $gW$ is given by%
\begin{equation*}
A_{gW}=\mathfrak{p}_{-}g^{-1}A_{W}\left( I+R_{W}\left( g\right) \right)
^{-1}g\text{.}
\end{equation*}
\end{lemma}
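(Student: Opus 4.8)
The plan is to reduce membership of $gW$ in $Gr^{(2)}$ to a single invertibility question on $H_{+}$, and then to read off $A_{gW}$ by inverting and projecting. First observe that the two structural conditions defining $Gr^{(2)}$ are automatic for $gW$. Multiplication by $g$ commutes with multiplication by $z^{2}$, so $z^{2}(gW)=g(z^{2}W)\subset gW$; and since $g=e^{h}$ and $g^{-1}=e^{-h}$ are bounded on $\partial\mathbb{D}_{r}$ (both extend holomorphically to $\mathbb{D}_{s}$ with value $1$ at $0$, hence lie in $H_{+}$), multiplication by $g$ is a bounded invertible operator on $H$ carrying the closed subspace $W$ to the closed subspace $gW$. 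Thus $gW\in Gr^{(2)}$ is equivalent to the single requirement that $\mathfrak{p}_{+}\colon gW\to H_{+}$ be bijective.

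To analyze this I would parametrize $gW$ by $H_{+}$: using $W=\{f_{+}+A_{W}f_{+}\}$ and the invertibility of multiplication by $g$, every element of $gW$ is uniquely $g(f_{+}+A_{W}f_{+})$ for some $f_{+}\in H_{+}$, so $\mathfrak{p}_{+}\colon gW\to H_{+}$ is bijective iff the operator $f_{+}\mapsto\mathfrak{p}_{+}g(f_{+}+A_{W}f_{+})$ is bijective on $H_{+}$. The key point is that $gH_{+}\subset H_{+}$, whence $\mathfrak{p}_{+}(gf_{+})=gf_{+}$, so this operator factors as
\[
\mathfrak{p}_{+}\,g\,(I+A_{W})=g\bigl(I+g^{-1}\mathfrak{p}_{+}gA_{W}\bigr)=g\,(I+R_{W}(g))
\]
as operators on $H_{+}$, where $g$ denotes multiplication, invertible on $H_{+}$ with inverse multiplication by $g^{-1}$. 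Consequently $\mathfrak{p}_{+}\colon gW\to H_{+}$ is bijective precisely when $I+R_{W}(g)$ is invertible on $H_{+}$.

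It then remains to upgrade invertibility to the stated kernel condition, and the one genuine input is that $R_{W}(g)$ is compact. Since $A_{W}$ maps into $H_{-}$ we have $\mathfrak{p}_{+}A_{W}=0$, so $R_{W}(g)=(g^{-1}\mathfrak{p}_{+}g-\mathfrak{p}_{+})A_{W}$; Lemma \ref{l2} with $g_{1}=g$, $g_{2}=1$ shows $g^{-1}\mathfrak{p}_{+}g-\mathfrak{p}_{+}$ is trace class, hence so is $R_{W}(g)$. Therefore $I+R_{W}(g)$ is a compact perturbation of the identity, Fredholm of index $0$, and so injective iff invertible (equivalently $\det(I+R_{W}(g))=\tau_{W}(g)\neq0$, matching the earlier assertion). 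Combining the steps, $gW\in Gr^{(2)}$ holds iff $I+R_{W}(g)$ is invertible iff $\ker(I+R_{W}(g))=\{0\}$, which is the asserted equivalence.

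For the formula I would solve explicitly: given $h_{+}\in H_{+}$, the element of $gW$ with $\mathfrak{p}_{+}$-image $h_{+}$ is $g(f_{+}+A_{W}f_{+})$ with $g(I+R_{W}(g))f_{+}=h_{+}$, i.e.\ $f_{+}=(I+R_{W}(g))^{-1}g^{-1}h_{+}$. Projecting this element onto $H_{-}$ and using $\mathfrak{p}_{-}(gf_{+})=0$ (since $gf_{+}\in H_{+}$) leaves $A_{gW}h_{+}=\mathfrak{p}_{-}\,gA_{W}f_{+}$, and substituting for $f_{+}$ yields the closed expression for $A_{gW}$ in terms of $\mathfrak{p}_{-}$, $A_{W}$, $(I+R_{W}(g))^{-1}$ and multiplication by $g^{\pm1}$. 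The main obstacle is the middle step: the passage from ``$\ker=\{0\}$'' to full bijectivity rests entirely on the trace-class estimate of Lemma \ref{l2}, without which injectivity would not force surjectivity. The remaining care is purely in the bookkeeping of the factorization through multiplication by $g$, together with the identity $\mathfrak{p}_{+}g|_{H_{+}}=g|_{H_{+}}$, which is precisely where the holomorphy of $g$ enters.
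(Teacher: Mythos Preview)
Your argument is correct and follows essentially the same route as the paper: both reduce the question to the invertibility of $I+R_{W}(g)$ on $H_{+}$, invoke compactness of $R_{W}(g)$ (via Lemma~\ref{l2}) to pass from trivial kernel to invertibility, and then read off $A_{gW}$. The paper presents the second half in verification form---defining the candidate $B$ and checking via the identity (\ref{18}) that $\{f+Bf:f\in H_{+}\}=gW$---whereas you derive the factorization $\mathfrak{p}_{+}\!\restriction_{gW}\,=\,g\,(I+R_{W}(g))$ directly and then invert; the content is the same.

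One remark: your derivation yields $A_{gW}=\mathfrak{p}_{-}\,g\,A_{W}\,(I+R_{W}(g))^{-1}g^{-1}$, which agrees with the paper's own computation (\ref{18}) but has $g$ and $g^{-1}$ interchanged relative to the displayed formula in the lemma statement. This is a typographical slip in the paper, not an error on your part; your expression is the correct one.
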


\begin{proof}
Since $R_{W}\left( g\right) $ is a compact operator, $\ker\left(
I+R_{W}\left( g\right) \right) =\left\{ 0\right\} $ implies the existence of 
$\left( I+R_{W}\left( g\right) \right) ^{-1}$ as a bounded operator on $%
H_{+}.$ Set%
\begin{equation*}
B=\mathfrak{p}_{-}g^{-1}A_{W}\left( I+R_{W}\left( g\right) \right) ^{-1}g%
\text{.}
\end{equation*}
For $f\in H_{+},$ an identity%
\begin{align}
& g^{-1}\left( f+Bf\right)  \notag \\
& =g^{-1}f+g^{-1}\mathfrak{p}_{-}gA_{W}\left( I+R_{W}\left( g\right) \right)
^{-1}g^{-1}f  \notag \\
& =g^{-1}f+A_{W}\left( I+R_{W}\left( g\right) \right)
^{-1}g^{-1}f-R_{W}\left( g\right) \left( I+R_{W}\left( g\right) \right)
^{-1}g^{-1}f  \notag \\
& =\left( I+R_{W}\left( g\right) \right) ^{-1}g^{-1}f+A_{W}\left(
I+R_{W}\left( g\right) \right) ^{-1}g^{-1}f  \label{18}
\end{align}
is valid, hence $g^{-1}\left( f+Bf\right) \in W$ and $gW\supset\left\{ f+Bf;%
\text{ \ }f\in H_{+}\right\} $ holds. Conversely, $f=g\left( I+R_{W}\left(
g\right) \right) u\in H_{+}$ for $u\in H_{+}$ satisfies $g^{-1}\left(
f+Bf\right) =u+A_{W}u\in W$ due to (\ref{18}). Hence we have%
\begin{equation*}
gW=\left\{ f+Bf;\text{ \ }f\in H_{+}\right\} \text{,}
\end{equation*}
which implies $gW\in Gr^{\left( 2\right) }$ and $B=A_{gW}$.\medskip
\end{proof}

Now suppose $f\in\ker\left( I+R_{W}\left( g\right) \right) ,$ then, from an
identity $gf+\mathfrak{p}_{+}\left( gA_{W}f\right) =0$ it follows that%
\begin{equation*}
\mathfrak{p}_{-}\left( gA_{W}f\right) =gA_{W}f-\mathfrak{p}_{+}\left(
gA_{W}f\right) =g\left( f+A_{W}f\right) \in gW\text{.}
\end{equation*}
Therefor $gW\in Gr^{(2)}$ implies $\mathfrak{p}_{-}\left( gA_{W}f\right) =0$%
. Hence, $f+A_{W}f=0$ and $f=0$ holds, which completes the proof.\medskip

\begin{proposition}
\label{p3}$\tau_{W}(g)$ satisfies the following properties.\newline
(i) $\ gW\in Gr^{\left( 2\right) }$ holds for $g\in\mathit{\Gamma}$ and $%
W\in Gr^{\left( 2\right) }$ if and only if $\tau_{W}(g)\neq0$.\newline
(ii) For $g_{1}$, $g_{2}\in\mathit{\Gamma}$, $W\in Gr^{\left( 2\right) }$
suppose $g_{1}W\in Gr^{\left( 2\right) }$ (equivalently $\tau_{W}\left(
g_{1}\right) \neq0$). Then%
\begin{equation}
\tau_{W}\left( g_{1}g_{2}\right) =\tau_{W}\left( g_{1}\right) \tau
_{g_{1}W}\left( g_{2}\right) \text{ \ \ \ }(\text{cocycle property})\text{.}
\label{19}
\end{equation}
(iii) If $g=e^{h}\in\mathit{\Gamma}$, and $g_{1}(z)=e^{h_{e}(z^{2})}$, $%
g_{2}(z)=e^{zh_{o}(z^{2})}$, then%
\begin{equation*}
\tau_{W}\left( g\right) =\tau_{W}\left( g_{1}\right) \tau_{W}\left(
g_{2}\right) \text{.}
\end{equation*}
(iv) $\tau_{W}(g)$ is continuous on $\mathit{\Gamma}$ with Sobolev $H^{2}$%
-norm.
\end{proposition}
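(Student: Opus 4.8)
For (i), recall that $R_{W}(g)$ is of trace class by Lemma \ref{l2}, so $I+R_{W}(g)$ is a trace-class perturbation of the identity; by Fredholm theory its determinant $\tau_{W}(g)=\det(I+R_{W}(g))$ is nonzero if and only if $I+R_{W}(g)$ is invertible, equivalently $\ker(I+R_{W}(g))=\{0\}$. Lemma \ref{l3} identifies the latter with $gW\in Gr^{(2)}$, so (i) follows at once. The remaining parts are best organised around the operator $M_{W}(g):=\mathfrak{p}_{+}g(I+A_{W})$ on $H_{+}$. Since $g$ is analytic and nonvanishing, multiplication by $g$ maps $H_{+}$ into $H_{+}$, so on $H_{+}$ it coincides with the Toeplitz operator $T(g)$, which is invertible with $T(g)^{-1}=T(g^{-1})$ and is multiplicative on analytic symbols. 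A short computation then gives $M_{W}(g)=T(g)+\mathfrak{p}_{+}gA_{W}=T(g)(I+R_{W}(g))$, i.e. $I+R_{W}(g)=T(g^{-1})M_{W}(g)$ and hence $\tau_{W}(g)=\det(T(g^{-1})M_{W}(g))$.

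For (ii) the key is a multiplicative identity for $M$. Assuming $g_{1}W\in Gr^{(2)}$, I would first show
\begin{equation*}
(I+A_{g_{1}W})M_{W}(g_{1})=g_{1}(I+A_{W})\quad\text{on }H_{+}.
\end{equation*}
Indeed, for $f\in H_{+}$ the element $g_{1}(I+A_{W})f$ lies in $g_{1}W$ and its $\mathfrak{p}_{+}$-part is exactly $M_{W}(g_{1})f$; since $g_{1}W\in Gr^{(2)}$, every element of $g_{1}W$ equals $(I+A_{g_{1}W})$ applied to its $\mathfrak{p}_{+}$-part, which is the claimed identity. Multiplying by $g_{2}$, projecting, and using $g_{1}g_{2}=g_{2}g_{1}$ yields the cocycle identity at the level of $M$, namely $M_{W}(g_{1}g_{2})=M_{g_{1}W}(g_{2})M_{W}(g_{1})$. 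Substituting $M_{W}(g)=T(g)(I+R_{W}(g))$ and $T((g_{1}g_{2})^{-1})=T(g_{2}^{-1})T(g_{1}^{-1})$ and cancelling the scalar Toeplitz factors (which commute) gives
\begin{equation*}
I+R_{W}(g_{1}g_{2})=T(g_{1}^{-1})\big(I+R_{g_{1}W}(g_{2})\big)T(g_{1})\big(I+R_{W}(g_{1})\big).
\end{equation*}
Taking determinants, using multiplicativity of the Fredholm determinant and the similarity invariance $\det(I+XY)=\det(I+YX)$ to remove the conjugation by $T(g_{1})$, gives $\tau_{W}(g_{1}g_{2})=\tau_{g_{1}W}(g_{2})\tau_{W}(g_{1})$. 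The point to watch is that every factor must be kept in the form $I+(\text{trace class})$ before any determinant is split.

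For (iii), note that $h_{e}(z^{2})$ and $zh_{o}(z^{2})$ are precisely the even and odd parts of $h$, so $g_{1}$ is a function of $z^{2}$ while $g_{2}$ satisfies $g_{2}(-z)=g_{2}(z)^{-1}$. I would pass to the product space $\boldsymbol{H}$ via the isomorphism $\phi$ of Section 2: there multiplication by $g_{1}$ becomes scalar multiplication by $e^{h_{e}}$ (identical on both components), whereas multiplication by $g_{2}$ becomes multiplication by a matrix symbol $G_{2}$ with $\det G_{2}=g_{2}(\sqrt{\cdot})g_{2}(-\sqrt{\cdot})=1$. Writing the three tau-functions as block Fredholm determinants $\det(I+PA_{\boldsymbol{W}})$ with the conjugated idempotents $P$ attached to $g_{1}$, $g_{2}$, and $g_{1}g_{2}$, the identity $\tau_{W}(g_{1}g_{2})=\tau_{W}(g_{1})\tau_{W}(g_{2})$ reduces to a Toeplitz/Hankel determinant factorization. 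The essential input is the shift-invariance of $\boldsymbol{W}$ (Proposition \ref{p2}), which forces $A_{\boldsymbol{W}}$ to commute with multiplication by $z$ up to a rank-two correction; combined with the analyticity of $e^{h_{e}}$ and the unimodularity of $G_{2}$, this makes the even/odd cross term vanish. Verifying this factorization is the main obstacle of the proposition, since it is exactly here that the $z^{2}$-invariance of $W$ (rather than mere boundedness of $A_{W}$) is used.

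Finally, (iv) is a continuity statement. By Lemma \ref{l2} the map $g\mapsto g^{-1}\mathfrak{p}_{+}g$ is continuous into the trace class with respect to the norms controlled by $\Vert g\Vert$, $\Vert g''\Vert$ and $\Vert g^{-1}\Vert$, that is, with respect to the Sobolev $H^{2}$-norm (inversion $g=e^{h}\mapsto g^{-1}$ being continuous). Since $R_{W}(g)=(g^{-1}\mathfrak{p}_{+}g)A_{W}$ with $A_{W}$ bounded, $g\mapsto R_{W}(g)$ is continuous into the trace class, and the Fredholm determinant is continuous in the trace norm; hence $\tau_{W}(g)=\det(I+R_{W}(g))$ is continuous on $\mathit{\Gamma}$ in the $H^{2}$-norm.
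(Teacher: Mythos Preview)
Your arguments for (i), (ii), and (iv) are correct and essentially match the paper's. In particular, your treatment of (ii) via the operator $M_{W}(g)=\mathfrak{p}_{+}g(I+A_{W})$ is a clean repackaging of the paper's direct computation: the paper simply writes $(g_{1}g_{2})A_{W}=g_{2}\mathfrak{p}_{+}g_{1}A_{W}+g_{2}\mathfrak{p}_{-}g_{1}A_{W}$, uses Lemma~\ref{l3} to rewrite $\mathfrak{p}_{-}g_{1}A_{W}=A_{g_{1}W}g_{1}(I+R_{W}(g_{1}))$, and arrives at the same factorisation
\[
I+R_{W}(g_{1}g_{2})=g_{1}^{-1}\bigl(I+R_{g_{1}W}(g_{2})\bigr)g_{1}\bigl(I+R_{W}(g_{1})\bigr)
\]
that you obtain. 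Your $T(g_{1})$ is just multiplication by $g_{1}$ on $H_{+}$, so the two are the same identity.

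The real gap is in (iii). You launch into a block Fredholm/Toeplitz factorisation in the product picture and concede that ``verifying this factorization is the main obstacle of the proposition.'' In fact (iii) is a one-line consequence of (ii), and the paper proves it that way: since $g_{1}(z)=e^{h_{e}(z^{2})}$ is an analytic function of $z^{2}$ (holomorphic on a disc slightly larger than $\mathbb{D}_{r}$), the $z^{2}$-invariance of $W$ together with closedness of $W$ gives $g_{1}W\subset W$, and the same for $g_{1}^{-1}$, hence $g_{1}W=W$. Concretely, $g_{1}(z)=\sum_{n\ge 0}c_{n}z^{2n}$ converges uniformly on $\partial\mathbb{D}_{r}$, so for $f\in W$ the partial sums $\sum_{n=0}^{N}c_{n}z^{2n}f\in W$ converge in $H$ to $g_{1}f$, which therefore lies in the closed subspace $W$. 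Once $g_{1}W=W$, part (ii) gives
\[
\tau_{W}(g)=\tau_{W}(g_{1}g_{2})=\tau_{W}(g_{1})\,\tau_{g_{1}W}(g_{2})=\tau_{W}(g_{1})\,\tau_{W}(g_{2}).
\]
So no Hankel factorisation, unimodularity of $G_{2}$, or rank-two commutator structure from Proposition~\ref{p2} is needed here; the $z^{2}$-invariance enters only through the elementary observation $g_{1}W=W$.
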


\begin{proof}
Since $\tau_{W}(g)=0$ if and only if $\ker\left( I+R_{W}\left( g\right)
\right) =\left\{ 0\right\} $, (i) is valid by Lemma \ref{l3}. To show (ii)
note%
\begin{align*}
\left( g_{1}g_{2}\right) A_{W} & =g_{2}\mathfrak{p}_{+}g_{1}A_{W}+g_{2}%
\mathfrak{p}_{-}g_{1}A_{W} \\
& =g_{2}g_{1}R_{W}\left( g_{1}\right) +g_{2}A_{g_{1}W}g_{1}\left(
I+R_{W}\left( g_{1}\right) \right) \text{,}
\end{align*}
and%
\begin{equation*}
\mathfrak{p}_{+}\left( \left( g_{1}g_{2}\right) A_{W}\right)
=g_{2}g_{1}R_{W}\left( g_{1}\right) +\mathfrak{p}_{+}\left(
g_{2}A_{g_{1}W}g_{1}\left( I+R_{W}\left( g_{1}\right) \right) \right) \text{,%
}
\end{equation*}
hence%
\begin{align*}
I+R_{W}\left( g_{1}g_{2}\right) & =I+R_{W}\left( g_{1}\right)
+g_{1}^{-1}g_{2}^{-1}\mathfrak{p}_{+}\left( g_{2}A_{g_{1}W}g_{1}\left(
I+R_{W}\left( g_{1}\right) \right) \right) \\
& =I+R_{W}\left( g_{1}\right) +g_{1}^{-1}R_{g_{1}W}(g_{2})g_{1}\left(
I+R_{W}\left( g_{1}\right) \right) \\
& =g_{1}^{-1}\left( I+R_{g_{1}W}(g_{2})\right) g_{1}\left( I+R_{W}\left(
g_{1}\right) \right) \text{.}
\end{align*}
Consequently, if $g_{1}W\in Gr^{(2)}$, then we have (ii). (iii) follows
immediately from (ii) if we notice $g_{1}W=W$. (iv) is a direct consequence
of Lemma\ref{l2}.\medskip
\end{proof}

The entries of characteristic matrices can be obtained from $\tau_{W}(g)$ by
choosing $g$ appropriately. Let%
\begin{equation*}
\varphi_{W}=A_{W}1\text{, \ }\psi_{W}=A_{W}z\in H_{-}\text{, \ (then }%
\boldsymbol{\varphi}_{\boldsymbol{W}}=\text{ }^{t}\left(
\varphi_{e},\varphi_{o}\right) \text{, \ }\boldsymbol{\psi}_{\boldsymbol{W}}=%
\text{ }^{t}\left( \psi_{e},\psi_{o}\right) \text{).}
\end{equation*}
An element $q_{\zeta}$ of $\mathit{\Gamma}$ defined by%
\begin{equation*}
q_{\zeta}\left( z\right) =\left( 1-z\zeta^{-1}\right) ^{-1}
\end{equation*}
for $\zeta\in\mathbb{C}$ such that $\left\vert \zeta\right\vert >r$ plays a
crucial role in the $\mathit{\Gamma}$-action, since any $g\in\mathit{\Gamma}$
can be expressed as a limit of $q_{\zeta_{1}}q_{\zeta_{2}}\cdots
q_{\zeta_{n}}$. For $f\in H_{-}$ we have a decomposition of $q_{\zeta}f$
into $H_{-}\oplus H_{+}$%
\begin{equation*}
q_{\zeta}f\left( z\right) =\left( 1-\frac{z}{\zeta}\right) ^{-1}f(z)=\left(
1-\frac{z}{\zeta}\right) ^{-1}\left( f(z)-f\left( \zeta\right) \right)
+\left( 1-\frac{z}{\zeta}\right) ^{-1}f\left( \zeta\right) \text{,}
\end{equation*}
which yields%
\begin{equation*}
\left( q_{\zeta}^{-1}\mathfrak{p}_{+}q_{\zeta}f\right) \left( z\right)
=\left( 1-\frac{z}{\zeta}\right) \left( 1-\frac{z}{\zeta}\right)
^{-1}f\left( \zeta\right) =f(\zeta)\text{.}
\end{equation*}
Hence, if $W\in Gr^{\left( 2\right) }$, then for $f\in H_{+}$%
\begin{equation*}
\left( q_{\zeta}^{-1}\mathfrak{p}_{+}q_{\zeta}A_{W}f\right) \left( z\right)
=\left( A_{W}f\right) \left( \zeta\right) =\left( A_{W}f\right) \left(
\zeta\right) 1
\end{equation*}
holds, which implies that $q_{\zeta}^{-1}\mathfrak{p}_{+}q_{\zeta}A_{W}$ is
a linear operator of rank $1$. Thus%
\begin{equation}
\tau_{W}\left( q_{\zeta}\right) =\det\left( I+q_{\zeta}^{-1}\mathfrak{p}%
_{+}q_{\zeta}A_{W}\right) =1+\left( A_{W}1\right) (\zeta)=1+\varphi
_{W}\left( \zeta\right)  \label{20}
\end{equation}
follows. Further calculations on $\tau$-functions can be found in the
Appendix.

The next role of the $\tau$-function is to express a potential of Schr\"{o}%
dinger operator for a given $W\in Gr^{(2)}$. Let $\left\{ e_{x}\right\}
_{x\in\mathbb{C}}$ be a one parameter group of elements of $\mathit{\Gamma} $
defined by 
\begin{equation*}
e_{x}(z)=e^{xz}\text{.}
\end{equation*}

\begin{lemma}
\label{l4}Suppose $\tau_{W}\left( e_{x}\right) \neq0$ for $x$ in a domain $D$
of $\mathbb{C}$. Then, the function%
\begin{equation}
f_{W}(x,\zeta)=e^{-x\zeta}\left( 1+\varphi_{e_{x}W}\left( \zeta\right)
\right) =e^{-x\zeta}\tau_{e_{x}W}\left( q_{\zeta}\right) \in W\text{ \ (as a
function of }\zeta\text{)}  \label{22}
\end{equation}
satisfies%
\begin{equation}
-f_{W}^{\prime\prime}(x,z)+q_{W}(x)f_{W}(x,z)=-z^{2}f_{W}(x,z)\text{,}
\label{23}
\end{equation}
namely $f_{W}$ is a Baker-Akhiezer function for the Schr\"{o}dinger operator 
$L_{q_{W}}$ in $D$. Moreover, if $\left\{ a_{n}(x)\right\} _{n\geq1}$ is
defined as coefficients of an expansion%
\begin{equation*}
f_{W}(x,\cdot)=e^{-xz}\left( 1+\dfrac{a_{1}(x)}{z}+\dfrac{a_{2}(x)}{z^{2}}%
+\cdots\right) \text{,}
\end{equation*}
then%
\begin{equation}
a_{1}(x)=\partial_{x}\log\tau_{W}\left( e_{x}\right) \text{,}  \label{24}
\end{equation}
and $q_{W}$ is given by%
\begin{equation}
q_{W}(x)=-2\partial_{x}^{2}\log\tau_{W}\left( e_{x}\right) \text{.}
\label{21}
\end{equation}
\end{lemma}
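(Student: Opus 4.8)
The plan is to establish the three assertions of Lemma~\ref{l4} in sequence, using the explicit rank-one formula \eqref{20} together with the abstract identities already derived for the $A$-operator. The crucial point is that $f_W(x,\zeta)$ as defined in \eqref{22} lies in $W$ and is a Baker-Akhiezer function; once that is in hand, the algebraic computation performed in the Introduction (leading to \eqref{2} and \eqref{3}) identifies the potential, and the logarithmic-derivative identities \eqref{24}, \eqref{21} follow from differentiating \eqref{20}.

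\textbf{Membership in $W$ and the Baker-Akhiezer form.} First I would verify that $f_W(x,\cdot)\in W$. By Proposition~\ref{p3}(i), the hypothesis $\tau_W(e_x)\neq0$ guarantees $e_xW\in Gr^{(2)}$, so the rank-one formula \eqref{20} applies to $e_xW$ in place of $W$, giving $\tau_{e_xW}(q_\zeta)=1+\varphi_{e_xW}(\zeta)$. The element $1+\varphi_{e_xW}(\zeta)=\mathbf{1}+A_{e_xW}\mathbf{1}$ belongs to $e_xW$ by definition of $A_{e_xW}$, hence $e^{-x\zeta}(1+\varphi_{e_xW}(\zeta))=e_x^{-1}(1+A_{e_xW}\mathbf{1})\in W$. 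This is exactly \eqref{22}. Writing $u(x,z)=e^{x z}f_W(x,z)-1\in H_-$ with Taylor coefficients $a_n(x)$, the argument already carried out in the Introduction shows that $e^{xz}\bigl(-f_W''-2a_1'f_W+z^2 f_W\bigr)$ is simultaneously an element of $e_xW$ (by the $z^2$-invariance (W.1)) and of $H_-$; since (W.2) forces $e_xW\cap H_-=\{0\}$, this combination vanishes, yielding \eqref{23} with $q_W(x)=-2a_1'(x)$. I would reuse this verbatim rather than recompute it.

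\textbf{The tau-function identities.} The remaining task is \eqref{24} and hence \eqref{21}. From \eqref{22} the coefficient $a_1(x)$ is read off from the expansion of $1+\varphi_{e_xW}(\zeta)=\tau_{e_xW}(q_\zeta)$ at $\zeta=\infty$: since $q_\zeta(z)=(1-z\zeta^{-1})^{-1}=1+z\zeta^{-1}+\cdots$, the $\zeta^{-1}$-coefficient of $\varphi_{e_xW}(\zeta)$ must be matched against $a_1(x)$. The bridge to $\tau_W(e_x)$ itself is the cocycle property \eqref{19}: applying $\tau_W(e_x q_\zeta)=\tau_W(e_x)\,\tau_{e_xW}(q_\zeta)$ and differentiating in $x$, the logarithmic derivative $\partial_x\log\tau_W(e_x)$ appears as the leading correction. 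Concretely, I expect $\partial_x\log\tau_W(e_x)=\lim_{\zeta\to\infty}\zeta\,\varphi_{e_xW}(\zeta)=a_1(x)$, which is \eqref{24}; then \eqref{21} is immediate from $q_W=-2a_1'=-2\partial_x^2\log\tau_W(e_x)$.

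\textbf{Expected obstacle.} The genuinely delicate step is \eqref{24}: justifying the interchange of the $x$-derivative with the $\zeta\to\infty$ expansion and confirming that $\partial_x\log\tau_W(e_x)$ equals precisely the $\zeta^{-1}$-coefficient, with the correct normalization and no extra boundary contribution. This requires differentiating the determinant $\tau_W(e_x)=\det(I+R_W(e_x))$ in $x$ and relating $\partial_x\log\det(I+R_W(e_x))=\mathrm{tr}\bigl((I+R_W(e_x))^{-1}\partial_x R_W(e_x)\bigr)$ to the rank-one evaluation at $\zeta$; the trace-class estimates of Lemma~\ref{l2} are what make this differentiation legitimate, and the cocycle identity is what converts the abstract trace into the explicit coefficient $a_1(x)$. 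Everything else is the routine pseudodifferential bookkeeping already displayed in the Introduction.
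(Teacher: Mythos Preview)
Your overall architecture is correct and the first half---membership in $W$ and the Schr\"odinger equation \eqref{23}---is exactly the paper's argument: reduce to the computation \eqref{2} in the Introduction via the $z^2$-invariance (W.1) and the bijectivity (W.2) for $e_xW$.

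Where your route diverges is precisely the step you flag as the ``expected obstacle,'' namely \eqref{24}. You propose to differentiate the Fredholm determinant $\det(I+R_W(e_x))$ in $x$, invoke trace-class estimates from Lemma~\ref{l2}, and identify the resulting trace with $a_1(x)$. That would work but is unnecessary. The paper sidesteps the determinant differentiation entirely with a one-line trick: by the cocycle identity \eqref{19},
\[
e^{x\zeta}f_W(x,\zeta)=\tau_{e_xW}(q_\zeta)=\dfrac{\tau_W(e_xq_\zeta)}{\tau_W(e_x)},
\]
and since $\log q_\zeta(z)=z\zeta^{-1}+O(\zeta^{-2})$, the element $e_xq_\zeta$ is close to $e_{x+\zeta^{-1}}$ in $H^2$ as $\zeta\to\infty$. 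Continuity of $\tau_W$ (Proposition~\ref{p3}(iv)) then gives
\[
a_1(x)=\lim_{\zeta\to\infty}\zeta\bigl(\tau_{e_xW}(q_\zeta)-1\bigr)=\lim_{\zeta\to\infty}\zeta\,\dfrac{\tau_W(e_{x+\zeta^{-1}})-\tau_W(e_x)}{\tau_W(e_x)}=\partial_x\log\tau_W(e_x),
\]
which is \eqref{24} directly as a difference quotient. So the continuity of $\tau_W$ is all you need; no trace formula for $\partial_x\log\det$ is required.
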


\begin{proof}
Since $e_{x}W\in Gr^{\left( 2\right) }$ for $x\in D$, there exists uniquely $%
u\in e_{x}W$ such that $\mathfrak{p}_{+}u=1$. Set $f_{W}(x,z)=e_{x}^{-1}u\in
W$. Then, the calculation in the Introduction shows that $f_{W}$ satisfies
the equation (\ref{23}). The formula (\ref{24}) is verified as follows. Since%
\begin{equation*}
e^{x\zeta}f_{W}(x,\zeta)=1+\left( A_{e_{x}W}1\right) \left( \zeta\right)
=\tau_{e_{x}W}\left( q_{\zeta}\right) =\dfrac{\tau_{W}\left( e_{x}q_{\zeta
}\right) }{\tau_{W}\left( e_{x}\right) }
\end{equation*}
is valid, we see%
\begin{align*}
a_{1}(x) & =\lim_{\zeta\rightarrow\infty}\zeta\left(
e^{x\zeta}f_{W}(x,\zeta)-1\right) =\lim_{\zeta\rightarrow\infty}\zeta\dfrac{%
\tau _{W}\left( e_{x}q_{\zeta}\right) -\tau_{W}\left( e_{x}\right) }{\tau
_{W}\left( e_{x}\right) } \\
& =\lim_{\zeta\rightarrow\infty}\zeta\dfrac{\tau_{W}\left(
e_{x+\zeta^{-1}}\right) -\tau_{W}\left( e_{x}\right) }{\tau_{W}\left(
e_{x}\right) }=\partial_{x}\log\tau_{W}\left( e_{x}\right) \text{,}
\end{align*}
which yields (\ref{24}).\medskip
\end{proof}

Since $\tau_{W}\left( e_{x}\right) $ is an entire function of $x$, we see
that $q_{W}(x)$ is meromorphic on $\mathbb{C}$, and it has poles of degree $%
2 $ on $\left\{ x\in\mathbb{C}\text{; }\tau_{W}\left( e_{x}\right)
=0\right\} $. Proposition\ref{p3} and Lemma\ref{l4} are rearrangement of the
corresponding results obtained by \cite{s-w}.

The formula (\ref{21}) defines a map from $W\in Gr^{(2)}$ to a space of
potentials, however this map is not injective. Later we will see a quantity
of $W$, which will be called as $m$-function, determines $q_{W}$.

\section{$m$-function}

In this section we define a crucial quantity of $W\in Gr^{(2)}$ which
determines $q_{W}$. Let $f_{W}(x,z)$ be the function introduced in Lemma\ref%
{l4} and define the $m$\textit{-}\emph{function} for $W\in Gr^{\left(
2\right) }$ by%
\begin{equation}
m_{W}\left( z\right) =-\dfrac{f_{W}^{\prime}(0,z)}{f_{W}(0,z)}\text{.}
\label{25}
\end{equation}
$m_{W}$ can be described by the elements of the characteristic matrix as
follows. Setting%
\begin{equation*}
f_{W}(x,z)=e^{-xz}\left( 1+\dfrac{a_{1}(x)}{z}+\dfrac{a_{2}(x)}{z^{2}}%
+\cdots\right) \in W\text{,}
\end{equation*}
we have%
\begin{align*}
f_{W}(0,z) & =1+\dfrac{a_{1}(0)}{z}+\dfrac{a_{2}(0)}{z^{2}}+\cdots
=1+\varphi_{W}\left( z\right) \\
f_{W}^{\prime}(0,z) & =-z-a_{1}(0)+\dfrac{a_{1}^{\prime}(0)-a_{2}(0)}{z}%
+\cdots \\
& =-z-\psi_{W}\left( z\right) -a_{1}(0)\left( 1+\varphi_{W}\left( z\right)
\right) \text{,}
\end{align*}
due to $f_{W}(0,z)$, $f_{W}^{\prime}(0,z)\in W$, hence%
\begin{equation}
m_{W}\left( z\right) =\frac{z+\psi_{W}\left( z\right) }{1+\varphi _{W}\left(
z\right) }+a_{1}\left( W\right) \text{,}  \label{26}
\end{equation}
where we have defined%
\begin{equation}
a_{1}\left( W\right) =a_{1}(0)=\lim_{z\rightarrow\infty}z\varphi_{W}\left(
z\right) \text{.}  \label{27}
\end{equation}
One of the importance of $m$-functions lies in its close relationship with $%
\tau$-functions. Namely, one can decompose $\tau_{W}\left( g\right) $ into
the two factors, one of which is a group homomorphism from $\mathit{\Gamma}$
to $\mathbb{C}$ and the other part depends only on $m_{W}$.

Since%
\begin{equation*}
1+\varphi_{W}\left( z\right) =1+\dfrac{a_{1}(0)}{z}+\dfrac{a_{2}(0)}{z^{2}}%
+\cdots\text{,}
\end{equation*}
there exists $r_{W}>r$ such that $1+\varphi_{W}\left( z\right) \neq0$ on $%
\left\{ \left\vert z\right\vert \geq r_{W}\right\} $. Let%
\begin{equation*}
\log\left( 1+\varphi_{W}\left( z\right) \right)
=b_{1}z^{-1}+b_{2}z^{-2}+\cdots\text{.}
\end{equation*}
Set%
\begin{equation}
\mathit{\Gamma}_{W}=\left\{ g=e^{h}\text{; }h\text{ holomorphic on }\left\{
\left\vert z\right\vert <r_{W}+\epsilon\right\} \text{ for some }%
\epsilon>0\right\} \text{,}  \label{55}
\end{equation}
and define%
\begin{equation}
\rho_{W}\left( e^{h}\right) =\exp\left(
\sum_{k=1}^{\infty}kb_{k}h_{k}\right) =\exp\left( \dfrac{1}{2\pi i}%
\int_{\left\vert z\right\vert =r_{W}}h^{\prime}\left( z\right) \log\left(
1+\varphi_{W}\left( z\right) \right) \right) dz  \label{28}
\end{equation}
\ for $h(z)=\sum_{k=1}^{\infty}h_{k}z^{k}$. Then, (\ref{28}) is convergent
for $g\in\mathit{\Gamma}_{W}$. If $g=q_{\zeta}$ with $\left\vert
\zeta\right\vert >r_{W}$, then, in view of $\log
q_{\zeta}(z)=\sum_{k=1}^{\infty}\zeta ^{-k}z^{k}/k$%
\begin{equation}
\rho_{W}\left( q_{\zeta}\right) =\exp\left( \sum_{k=1}^{\infty}\zeta
^{-k}b_{k}\right) =1+\varphi_{W}\left( \zeta\right)  \label{29}
\end{equation}
holds. For a holomorphic function $m$ on $\left\{ \left\vert z\right\vert
>r\right\} $ and $\zeta\in\left\{ \left\vert z\right\vert >r\right\} $ define%
\begin{equation*}
\left( d_{\zeta}m\right) \left( z\right) =\dfrac{z^{2}-\zeta^{2}}{m\left(
z\right) -m\left( \zeta\right) }-m\left( \zeta\right) \text{.}
\end{equation*}
\ \ \ One can see easily that $d_{\zeta_{1}}d_{\zeta_{2}}=d_{\zeta_{2}}d_{%
\zeta_{1}}$. Then, (\ref{50}) in the Appendix implies that $m_{q_{\zeta
_{1}}q_{\zeta_{2}}\cdots q_{\zeta_{n}}W}\left( z\right) $ for $\left\vert
\zeta_{k}\right\vert >r$, $k=1,2,\cdots,n$ is generated from $m_{W}$ by%
\begin{equation*}
m_{q_{\zeta_{1}}q_{\zeta_{2}}\cdots q_{\zeta_{n}}W}\left( z\right) =\left(
d_{\zeta_{1}}d_{\zeta_{2}}\cdots d_{\zeta_{n}}m_{W}\right) \left( z\right) 
\text{.}
\end{equation*}
For $g=q_{\zeta_{1}}q_{\zeta_{2}}\cdots q_{\zeta_{n}}$ with $\left\vert
\zeta_{k}\right\vert >r$, $k=1,2,\cdots,n$ define $\tau_{m}\left( g\right) $
inductively\ by%
\begin{equation}
\left\{ 
\begin{array}{l}
\tau_{m}\left( q_{\zeta_{1}}\right) =1 \\ 
\tau_{m}\left( q_{\zeta_{1}}q_{\zeta_{2}}\cdots q_{\zeta_{n}}\right)
\tau_{m}\left( q_{\zeta_{1}}q_{\zeta_{2}}\cdots q_{\zeta_{n-1}}\right) ^{-1}
\\ 
=\dprod \limits_{k=1}^{n-1}\dfrac{\left( d_{\zeta_{1}}d_{\zeta_{2}}\cdots
d_{\zeta_{n-k-1}}m\right) \left( \zeta_{n}\right) -\left(
d_{\zeta_{1}}d_{\zeta_{2}}\cdots d_{\zeta_{n-k-1}}m\right) \left(
\zeta_{n-k}\right) }{\zeta_{n}-\zeta _{n-k}}%
\end{array}
\right. \text{.}  \label{30}
\end{equation}

\begin{proposition}
\label{p4}$\rho_{W}$ satisfies%
\begin{equation}
\rho_{W}(g_{1}g_{2})=\rho_{W}(g_{1})\rho_{W}(g_{2})\text{,}  \label{31}
\end{equation}
for any $g_{1}$, $g_{2}\in\mathit{\Gamma}_{W}$. Moreover, $\tau_{m}\left(
g\right) $ is extendable to $\mathit{\Gamma}_{W}$ if $m=m_{W}$ so that%
\begin{equation}
\tau_{W}(g)=\rho_{W}(g)\tau_{m_{W}}\left( g\right)  \label{32}
\end{equation}
holds, and $\tau_{m_{W}}\left( g\right) $ depends on $W$ only through $m_{W} 
$, namely if $m_{W_{1}}=m_{W_{2}}$ for $W_{1}$, $W_{2}\in Gr^{(2)}$, then $%
\tau_{m_{W_{1}}}\left( g\right) =\tau_{m_{W_{2}}}\left( g\right) $ holds.
\end{proposition}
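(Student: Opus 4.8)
The plan is to prove the three assertions in turn, establishing the factorization (\ref{32}) first on the multiplicative generators $q_\zeta$ and only then extending by density and continuity. For the homomorphism property (\ref{31}) I would observe that on $\mathit{\Gamma}_W$ the assignment $g=e^h\mapsto h$ is additive, so $g_1g_2=e^{h_1+h_2}$, while the exponent in (\ref{28}), written in the contour form $\frac{1}{2\pi i}\int_{|z|=r_W}h'(z)\log(1+\varphi_W(z))\,dz$, is a linear functional of $h$. Hence $\rho_W(g_1g_2)=\rho_W(g_1)\rho_W(g_2)$ is immediate. The same functional is bounded in the $H^2$-norm of $h$, which gives continuity of $\rho_W$; and $\rho_W$ is an exponential, hence nowhere zero, so quotients by $\rho_W$ will be legitimate below.

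For (\ref{32}) I would induct on $n$ for $g=q_{\zeta_1}\cdots q_{\zeta_n}$ with $|\zeta_k|>r_W$, setting $\sigma_W(g)=\tau_W(g)/\rho_W(g)$ and $g'=q_{\zeta_1}\cdots q_{\zeta_{n-1}}$. The cocycle property (Proposition \ref{p3}) together with (\ref{20}) gives $\tau_W(g'q_{\zeta_n})=\tau_W(g')\,(1+\varphi_{g'W}(\zeta_n))$, while (\ref{31}) and (\ref{29}) give $\rho_W(g'q_{\zeta_n})=\rho_W(g')\,(1+\varphi_W(\zeta_n))$; dividing reduces the inductive step to showing that $\frac{1+\varphi_{g'W}(\zeta_n)}{1+\varphi_W(\zeta_n)}$ equals the right-hand product of (\ref{30}). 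I would obtain this by telescoping the elementary one-step rule $\frac{1+\varphi_{q_\zeta W'}(\eta)}{1+\varphi_{W'}(\eta)}=\frac{m_{W'}(\eta)-m_{W'}(\zeta)}{\eta-\zeta}$ furnished by the Appendix computation (\ref{50}) along the tower $W,\,q_{\zeta_1}W,\dots,q_{\zeta_1}\cdots q_{\zeta_{n-1}}W$, inserting $m_{q_{\zeta_1}\cdots q_{\zeta_i}W}=d_{\zeta_1}\cdots d_{\zeta_i}m_W$ at each stage; the substitution $i=n-k$ then matches the telescoped product term by term with (\ref{30}). Since $\sigma_W=\tau_W/\rho_W$ is continuous by Proposition \ref{p3}(iv) and the previous paragraph, and the products $q_{\zeta_1}\cdots q_{\zeta_n}$ are dense in $\mathit{\Gamma}_W$ in the $H^2$-norm, $\sigma_W$ is the continuous extension of (\ref{30}) to all of $\mathit{\Gamma}_W$; this extension is by definition $\tau_{m_W}$ and satisfies (\ref{32}).

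Assertion (iii) is then almost formal. On the generators the value $\tau_{m_W}(g)$ is given by (\ref{30}), which is assembled solely from $m_W$ through the commuting operators $d_\zeta$ and point evaluations; hence if $m_{W_1}=m_{W_2}$ the two functions coincide on the dense set of $q_\zeta$-products, and by continuity they coincide wherever both are defined, giving $\tau_{m_{W_1}}=\tau_{m_{W_2}}$.

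The hard part will be the one-step identity underlying the induction: although $1+\varphi_{W'}$ is \emph{not} itself a function of $m_{W'}$ alone (by (\ref{26}) it also involves $\psi_{W'}$ and $a_1(W')$), the \emph{ratio} after a single $q_\zeta$-step is, and it is exactly the divided difference $\frac{m_{W'}(\eta)-m_{W'}(\zeta)}{\eta-\zeta}$. Extracting this cleanly from (\ref{50}), and then verifying the index bookkeeping that aligns the telescoped product with the recursion (\ref{30}), is the real content; this is also what forces $\tau_{m_W}$ to depend on $W$ only through $m_W$. The remaining analytic points — convergence of (\ref{30}) for distinct $\zeta_k$, density of the $q_\zeta$-products in $\mathit{\Gamma}_W$, and joint continuity of $\tau_W$ and $\rho_W$ so that the quotient extends — are routine given Lemma \ref{l2} and Proposition \ref{p3}(iv).
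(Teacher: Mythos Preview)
Your proposal is correct and follows essentially the same route as the paper: establish (\ref{32}) inductively on products $q_{\zeta_1}\cdots q_{\zeta_n}$ via the cocycle property, (\ref{29}), and the one-step ratio identity, then extend by continuity and density. Two minor remarks: the one-step rule $\frac{1+\varphi_{q_\zeta W'}(\eta)}{1+\varphi_{W'}(\eta)}=\frac{m_{W'}(\eta)-m_{W'}(\zeta)}{\eta-\zeta}$ is (\ref{47}) rather than (\ref{50}), and the paper makes the density step concrete by truncating $h$ to a polynomial $h_m$ and factoring $(1-h_m/n)^{-n}$ explicitly as a product of $q_\zeta$'s with $|\zeta|>r_W$.
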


\begin{proof}
(\ref{31}) follows easily from the definition. We show (\ref{32}) for $%
g=q_{\zeta_{1}}q_{\zeta_{2}}\cdots q_{\zeta_{n}}$ with $\left\vert \zeta
_{k}\right\vert >r_{W}$, $k=1,2,\cdots,n$. If $n=2$, then, from (\ref{51})
in the Appendix%
\begin{equation*}
\tau_{W}\left( q_{\zeta_{1}}q_{\zeta_{2}}\right) =\left( 1+\varphi
_{W}\left( \zeta_{1}\right) \right) \left( 1+\varphi_{W}\left( \zeta
_{2}\right) \right) \dfrac{m_{W}\left( \zeta_{1}\right) -m_{W}\left(
\zeta_{2}\right) }{\zeta_{1}-\zeta_{2}}\text{,}
\end{equation*}
so that in this case%
\begin{equation*}
\dfrac{\tau_{W}\left( q_{\zeta_{1}}q_{\zeta_{2}}\right) }{%
\rho_{W}(q_{\zeta_{1}}q_{\zeta_{2}})}=\dfrac{m_{W}\left( \zeta_{1}\right)
-m_{W}\left( \zeta_{2}\right) }{\zeta_{1}-\zeta_{2}}=\tau_{m_{W}}\left(
q_{\zeta_{1}}q_{\zeta_{2}}\right) \text{.}
\end{equation*}
Assume%
\begin{equation*}
\tau_{W}\left( q_{\zeta_{1}}q_{\zeta_{2}}\cdots q_{\zeta_{n-1}}\right)
=\rho_{W}(q_{\zeta_{1}}q_{\zeta_{2}}\cdots
q_{\zeta_{n-1}})\tau_{m_{W}}\left( q_{\zeta_{1}}q_{\zeta_{2}}\cdots
q_{\zeta_{n-1}}\right) \text{.}
\end{equation*}
Then, from (\ref{19}) and (\ref{20})%
\begin{align*}
& \tau_{W}\left( q_{\zeta_{1}}q_{\zeta_{2}}\cdots
q_{\zeta_{n-1}}q_{\zeta_{n}}\right) \\
& =\tau_{W}\left( q_{\zeta_{1}}q_{\zeta_{2}}\cdots q_{\zeta_{n-1}}\right)
\tau_{q_{\zeta_{1}}q_{\zeta_{2}}\cdots q_{\zeta_{n-1}}W}\left(
q_{\zeta_{n}}\right) \\
& =\rho_{W}(q_{\zeta_{1}}q_{\zeta_{2}}\cdots
q_{\zeta_{n-1}})\tau_{m_{W}}\left( q_{\zeta_{1}}q_{\zeta_{2}}\cdots
q_{\zeta_{n-1}}\right) \left( 1+\varphi_{q_{\zeta_{1}}q_{\zeta_{2}}\cdots
q_{\zeta_{n-1}}W}\left( \zeta _{n}\right) \right)
\end{align*}
follows. On the other hand, iterated use of (\ref{47}) shows%
\begin{align*}
& 1+\varphi_{q_{\zeta_{1}}q_{\zeta_{2}}\cdots q_{\zeta_{n-1}}W}\left(
\zeta_{n}\right) \\
& =\left( 1+\varphi_{W}\left( \zeta_{n}\right) \right) \dprod _{k=1}^{n-1}%
\dfrac{m_{q_{\zeta_{1}}q_{\zeta_{2}}\cdots q_{\zeta_{n-k-1}}W}\left(
\zeta_{n}\right) -m_{q_{\zeta_{1}}q_{\zeta_{2}}\cdots
q_{\zeta_{n-k-1}}W}\left( \zeta_{n-k}\right) }{\zeta_{n}-\zeta_{n-k}}\text{.}
\end{align*}
Thus, for $g=q_{\zeta_{1}}q_{\zeta_{2}}\cdots q_{\zeta_{n}}$ (\ref{32}) is
valid. At each step $q_{\zeta_{1}}q_{\zeta_{2}}\cdots q_{\zeta_{n-k-1}}W\in
Gr^{\left( 2\right) }$ should be examined. However, the final identity (\ref%
{32}) implies that we have only to take some limit if necessary. For general 
$g=e^{h}\in\mathit{\Gamma}_{W}$ let $h_{m}(z)=\sum_{k=1}^{m}h_{k}z^{k}$, and 
$\left\{ \zeta_{k}^{\left( n\right) }\right\} _{1\leq k\leq m}$ be the all
zeroes of $1-h_{m}(z)/n$. One can assume $\left\vert \zeta _{k}^{\left(
n\right) }\right\vert >r_{W}$, because $1-h_{m}(z)/n\rightarrow 1$ as $%
n\rightarrow\infty$. Since%
\begin{equation*}
g_{n}^{\left( m\right) }\left( z\right) \equiv\left( 1-\dfrac{h_{m}(z)}{n}%
\right) ^{-n}=\left( q_{\zeta_{1}^{\left( n\right) }}\left( z\right)
q_{\zeta_{2}^{\left( n\right) }}\left( z\right) \cdots q_{\zeta_{m}^{\left(
n\right) }}\left( z\right) \right) ^{n}\text{,}
\end{equation*}
the identity (\ref{32}) is valid for $g_{n}^{\left( m\right) }\in\mathit{%
\Gamma}_{W}$. Then, the continuity of $\tau_{W}$ and $\rho_{W}$ show that $%
\tau_{m_{W}}$ is extendable by letting $n\rightarrow\infty$ and $%
m\rightarrow\infty$.\medskip
\end{proof}

Corollary below shows that the non-vanishing property of $\tau_{W}(g)$ on $%
\mathit{\Gamma}$ is determined by the $m$-function.

\begin{corollary}
\label{c2}For $W_{1}$, $W_{2}\in Gr^{\left( 2\right) }$ assume $%
m_{W_{1}}\left( z\right) =m_{W_{2}}\left( z\right) $. Then, for $g\in 
\mathit{\Gamma}$ it holds that $\tau_{W_{1}}\left( g\right) \neq0$ is valid
if and only if $\tau_{W_{2}}\left( g\right) \neq0$.
\end{corollary}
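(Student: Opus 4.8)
The plan is to build on the factorization (\ref{32}) of Proposition \ref{p4}. Write $m=m_{W_1}=m_{W_2}$. On the set $\mathit{\Gamma}_{W_1}\cap\mathit{\Gamma}_{W_2}$ both decompositions $\tau_{W_j}(g)=\rho_{W_j}(g)\,\tau_{m}(g)$ hold, with one and the same factor $\tau_m(g)$, because $\tau_{m_W}$ depends on $W$ only through $m_W$. Since each $\rho_{W_j}(g)$ is an exponential and hence never $0$, we obtain on $\mathit{\Gamma}_{W_1}\cap\mathit{\Gamma}_{W_2}$ the identity $\tau_{W_1}(g)=F(g)\,\tau_{W_2}(g)$, where $F(g)=\rho_{W_1}(g)\rho_{W_2}(g)^{-1}$ vanishes nowhere. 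The corollary would follow at once if this identity, together with $F\neq 0$, could be pushed to every $g\in\mathit{\Gamma}$; the difficulty is that the individual factors $\rho_{W_j}$ are defined only on the proper subset $\mathit{\Gamma}_{W_j}$ and genuinely diverge outside it, so one must show that their \emph{ratio} nevertheless extends.

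The heart of the matter is therefore the analysis of $F$. By (\ref{28}), $\log\rho_{W_j}(e^{h})=\sum_{k\geq1}kb_k^{(j)}h_k$, where $\log(1+\varphi_{W_j}(z))=\sum_{k\geq1}b_k^{(j)}z^{-k}$, so $\log F(e^{h})=\sum_{k\geq1}kc_kh_k$ with $\sum_k c_kz^{-k}=\log\bigl((1+\varphi_{W_1})/(1+\varphi_{W_2})\bigr)$. I would prove the key geometric fact that the quotient $(1+\varphi_{W_1})/(1+\varphi_{W_2})$ is holomorphic, single-valued and zero-free on all of $\{|z|>r\}$ and tends to $1$ at $\infty$. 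Granting this, $\limsup_k|c_k|^{1/k}\leq r$, whereas for $g=e^{h}\in\mathit{\Gamma}$ one has $|h_k|\lesssim s^{-k}$ with some $s>r$; hence $\sum_k kc_kh_k$ converges for \emph{every} $g\in\mathit{\Gamma}$, and the resulting $F$ is nowhere zero and continuous in the Sobolev $H^{2}$-norm. Thus $F$, unlike $\rho_{W_1}$ and $\rho_{W_2}$ separately, lives on the whole of $\mathit{\Gamma}$.

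To establish that geometric fact I would use (\ref{26}). On $\{|z|>r\}$ the poles of $m_W$ are exactly the zeros of $1+\varphi_W$: a simultaneous zero of $1+\varphi_W$ and of $z+\psi_W$ is impossible, for by the formulas for $f_W(0,\cdot)$ and $f_W'(0,\cdot)$ it would force $f_W(0,z_0)=f_W'(0,z_0)=0$, whence $f_W(\cdot,z_0)\equiv0$ by uniqueness for the second-order equation (\ref{23}), contradicting the non-triviality of the Baker--Akhiezer function. Since $m_{W_1}=m_{W_2}$, the two meromorphic functions have the same poles with the same multiplicities, so $1+\varphi_{W_1}$ and $1+\varphi_{W_2}$ have identical zero sets counted with multiplicity, which is precisely what makes their quotient holomorphic, single-valued and zero-free on $\{|z|>r\}$. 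Concretely, with $A_j=1+\varphi_{W_j}$ and $B_j=z+\psi_{W_j}$, equality of the $m$-functions reads $B_1A_2-B_2A_1=c\,A_1A_2$ for a constant $c$, and the absence of common zeros of $A_j,B_j$ forces $A_1$ and $A_2$ to vanish to the same order at every point.

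Finally I would extend the identity $\tau_{W_1}=F\,\tau_{W_2}$ from $\mathit{\Gamma}_{W_1}\cap\mathit{\Gamma}_{W_2}$ to all of $\mathit{\Gamma}$. That subset is dense for the $H^{2}$-norm: truncating $h$ to the polynomial $h_m=\sum_{k\leq m}h_kz^{k}$ gives $e^{h_m}\in\mathit{\Gamma}_{W_1}\cap\mathit{\Gamma}_{W_2}$ with $e^{h_m}\to e^{h}$, in the spirit of the approximation used in the proof of Proposition \ref{p4}, and on it the identity holds \emph{identically} (also at the common zeros of both sides). Since $\tau_{W_1},\tau_{W_2}$ are continuous by Proposition \ref{p3}(iv) and $F$ is continuous and nowhere zero, the identity passes to the limit, giving $\tau_{W_1}(g)=F(g)\,\tau_{W_2}(g)$ with $F(g)\neq0$ for all $g\in\mathit{\Gamma}$; hence $\tau_{W_1}(g)=0$ if and only if $\tau_{W_2}(g)=0$. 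I expect the main obstacle to be the no-common-zero lemma of the third paragraph, that is, showing the Baker--Akhiezer function $f_W(\cdot,z_0)$ cannot be annihilated together with its $x$-derivative at $x=0$ for any $z_0$ with $|z_0|>r$; everything else is bookkeeping around the factorization, the coefficient estimates, and continuity.
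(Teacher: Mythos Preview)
Your approach is correct and genuinely different from the paper's. The paper does not attempt to extend the ratio $F=\rho_{W_1}/\rho_{W_2}$ to all of $\mathit{\Gamma}$; instead it inserts an auxiliary factor $\widetilde{g}\in\mathit{\Gamma}_{W_1}\cap\mathit{\Gamma}_{W_2}$, uses the continuity of $\tau_{W_j}$ to pass to the limit $e^{h_n}\to g$ in the identity $\tau_{W_2}(\widetilde{g}e^{h_n})=\bigl(\rho_{W_2}(\widetilde{g})\rho_{W_2}(e^{h_n})/\rho_{W_1}(\widetilde{g})\rho_{W_1}(e^{h_n})\bigr)\tau_{W_1}(\widetilde{g}e^{h_n})$, deduces that the ratio $\rho_{W_2}(e^{h_n})/\rho_{W_1}(e^{h_n})$ converges to some $c\in\mathbb{C}$, and then argues $c\neq0$ by contradiction, choosing $\widetilde{g}=e^{-h_n}$ so as to hit $\tau_{W_2}(1)=1$. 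This is softer and shorter: it never analyzes the zeros of $1+\varphi_{W_j}$ and never identifies $c$. Your route is more constructive and yields the stronger statement $\tau_{W_1}(g)=F(g)\tau_{W_2}(g)$ on all of $\mathit{\Gamma}$ with an explicit, nowhere-vanishing $F$; the price is the geometric lemma about matching zeros.

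On that lemma: your ODE argument is the one place I would tighten. From $f_W(0,z_0)=f_W'(0,z_0)=0$ and the Schr\"odinger equation you conclude $f_W(x,z_0)\equiv0$ near $x=0$, but ``non-triviality of the Baker--Akhiezer function'' only says $z\mapsto f_W(x,z)$ is not identically zero for each $x$; it does not immediately exclude $x\mapsto f_W(x,z_0)$ being identically zero for one particular $z_0$. The paper actually gives you the needed no-common-zero statement for free: by Proposition~\ref{p2}(ii) the characteristic matrix $\mathit{\Pi}_W$ is invertible on $\{|z|>r\}$, and the computation in the proof of Lemma~\ref{l6}(ii) shows that a common zero of $1+\varphi_W$ and $z+\psi_W$ would force $\det\mathit{\Pi}_W(z_0^2)=0$. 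Replacing your ODE step by this observation makes the argument airtight; the rest of your proof (coefficient bound $\limsup|c_k|^{1/k}\le r$ from holomorphy of $\log\bigl((1+\varphi_{W_1})/(1+\varphi_{W_2})\bigr)$ on $\{|z|>r\}$, convergence of $\sum k c_k h_k$ for $g=e^h\in\mathit{\Gamma}$, and passage to the limit by Proposition~\ref{p3}(iv)) goes through as written.
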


\begin{proof}
Assume $\tau_{W_{1}}\left( g\right) \neq0$. Then, $\tau_{W_{1}}\left(
e^{h_{n}}\right) \neq0$ for every sufficiently large $n$, where $%
h_{n}(z)=\sum_{k=1}^{n}h_{k}z^{k}$, and Proposition\ref{p4} implies%
\begin{equation*}
\tau_{W_{2}}\left( \widetilde{g}e^{h_{n}}\right) =\dfrac{\rho_{W_{2}}\left( 
\widetilde{g}\right) \rho_{W_{2}}\left( e^{h_{n}}\right) }{%
\rho_{W_{1}}\left( \widetilde{g}\right) \rho_{W_{1}}\left( e^{h_{n}}\right) }%
\tau_{W_{1}}\left( \widetilde{g}e^{h_{n}}\right)
\end{equation*}
for any $\widetilde{g}\in\mathit{\Gamma}_{W_{1}}\cap\mathit{\Gamma}_{W_{2}}$%
. Since $\tau_{W_{1}}\left( \widetilde{g}e^{h_{n}}\right) $, $%
\tau_{W_{2}}\left( \widetilde{g}e^{h_{n}}\right)
\rightarrow\tau_{W_{1}}\left( \widetilde{g}g\right) $, $\tau_{W_{2}}\left( 
\widetilde{g}g\right) $ respectively, there exists a $c\in\mathbb{C}$ such
that $\rho_{W_{2}}\left( e^{h_{n}}\right) /\rho_{W_{1}}\left(
e^{h_{n}}\right) \rightarrow c$, and%
\begin{equation*}
\tau_{W_{2}}\left( \widetilde{g}g\right) =c\dfrac{\rho_{W_{2}}\left(
g_{1}\right) }{\rho_{W_{1}}\left( g_{1}\right) }\tau_{W_{1}}\left( 
\widetilde{g}g\right)
\end{equation*}
holds. Suppose $c=0$. Then, $\tau_{W_{2}}\left( \widetilde{g}g\right) =0$
for any $\widetilde{g}\in\mathit{\Gamma}_{W_{1}}\cap\mathit{\Gamma}_{W_{2}}$%
, which contradicts $\tau_{W_{2}}\left( 1\right) =1$, if we choose $%
\widetilde{g}=e^{-h_{n}}$. Therefore, we have $c\neq0$, which shows $%
\tau_{W_{2}}\left( g\right) \neq0$.
\end{proof}

\begin{corollary}
\label{c3}For $W\in Gr^{\left( 2\right) }$ it holds that%
\begin{equation*}
q_{W}(x)=-2\partial_{x}^{2}\log\tau_{m_{W}}\left( e_{x}\right) \text{.}
\end{equation*}
\end{corollary}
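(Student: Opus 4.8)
The plan is to combine the factorization of $\tau_W$ from Proposition~\ref{p4} with formula (\ref{21}) of Lemma~\ref{l4}. Indeed (\ref{21}) gives $q_W(x)=-2\partial_x^2\log\tau_W(e_x)$, while (\ref{32}) gives $\tau_W(e_x)=\rho_W(e_x)\tau_{m_W}(e_x)$ as soon as $e_x\in\mathit{\Gamma}_W$. Taking logarithms converts the product into a sum,
\begin{equation*}
\log\tau_W(e_x)=\log\rho_W(e_x)+\log\tau_{m_W}(e_x)\text{,}
\end{equation*}
so the entire statement reduces to showing that $\partial_x^2\log\rho_W(e_x)=0$, i.e. that $x\mapsto\log\rho_W(e_x)$ is affine in $x$; the extra linear term then dies under $-2\partial_x^2$.

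First I would check that $e_x\in\mathit{\Gamma}_W$ for every $x$. The exponent $h(z)=xz$ of $e_x=e^{xz}$ is entire, hence holomorphic on any disc $\{|z|<r_W+\epsilon\}$, so $e_x$ lies in the class (\ref{55}) and both the factorization (\ref{32}) and the explicit formula (\ref{28}) are available for $g=e_x$.

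Next I would compute $\rho_W(e_x)$ directly from (\ref{28}), which reads $\rho_W(e^h)=\exp\bigl(\sum_{k\geq1}kb_kh_k\bigr)$ for $h(z)=\sum_{k\geq1}h_kz^k$. For $h(z)=xz$ the only nonzero Taylor coefficient is $h_1=x$, so the sum collapses to the single term $b_1x$, giving $\rho_W(e_x)=e^{b_1x}$, where $b_1$ is the leading coefficient of $\log(1+\varphi_W(z))=b_1z^{-1}+b_2z^{-2}+\cdots$ (in fact $b_1=a_1(W)$ by (\ref{27})). Thus $\log\rho_W(e_x)=b_1x$ is linear in $x$ and its second $x$-derivative vanishes. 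As an independent cross-check that bypasses the coefficient computation, the multiplicativity (\ref{31}) together with $e_xe_y=e_{x+y}$ shows that $x\mapsto\log\rho_W(e_x)$ is additive, hence linear once analyticity in $x$ (which Lemma~\ref{l4} already provides) is granted; this again forces $\partial_x^2\log\rho_W(e_x)=0$.

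Substituting $\log\rho_W(e_x)=b_1x$ into the displayed identity and applying $-2\partial_x^2$ annihilates the linear term, yielding $q_W(x)=-2\partial_x^2\log\tau_{m_W}(e_x)$ as claimed. I do not expect a genuine obstacle: the argument is essentially a one-line consequence of (\ref{21}) and (\ref{32}) once one records that $\rho_W(e_x)$ is a pure exponential in $x$. The only step that demands care is verifying that $e_x$ belongs to $\mathit{\Gamma}_W$, so that (\ref{32}) is legitimately applicable at $g=e_x$; this is immediate because $h(z)=xz$ is entire.
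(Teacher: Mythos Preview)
Your proof is correct and follows exactly the paper's approach: the paper's proof is the single line ``Since $\rho_W(e_x)=e^{xb_1}$, Proposition~\ref{p4} completes the proof,'' which is precisely your argument of combining (\ref{21}) with the factorization (\ref{32}) and observing that $\log\rho_W(e_x)=b_1x$ is killed by $\partial_x^2$. Your additional verification that $e_x\in\mathit{\Gamma}_W$ and the alternative route via multiplicativity (\ref{31}) are sound but more than the paper bothers to spell out.
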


\begin{proof}
Since $\rho_{W}\left( e_{x}\right) =e^{xb_{1}}$, Proposition\ref{p4}
completes the proof.$\medskip$
\end{proof}

To show the continuity of $m_{gW}$ with respect to $g$ we need a
representation of $m_{W}$ by the $\tau$-functions.

\begin{lemma}
\label{l5}For $W\in Gr^{\left( 2\right) }$ we have%
\begin{equation*}
m_{W}\left( \zeta\right) =\zeta+\dfrac{1}{2\pi i}\int_{\left\vert
\omega\right\vert =R}\left( \tau_{W}\left( q_{\omega}\right) -\dfrac {%
\tau_{W}\left( q_{\zeta}q_{\omega}\right) }{\tau_{W}\left( q_{\zeta }\right) 
}\right) d\omega
\end{equation*}
for any $R>r$ and $\zeta$ such that $\tau_{W}\left( q_{\zeta}\right)
=1+\varphi_{W}\left( \zeta\right) \neq0$.
\end{lemma}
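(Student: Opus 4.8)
The plan is to reduce the integrand to a difference of two $\varphi$-functions, evaluate the contour integral as the extraction of an $\omega^{-1}$-coefficient, and then identify the result with $m_W(\zeta)$ using the transformation rule for $\varphi$ under the $q_\zeta$-action.

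First I would note that the hypothesis $\tau_W(q_\zeta)\neq0$ guarantees $q_\zeta W\in Gr^{(2)}$ by Proposition \ref{p3}(i), so $\varphi_{q_\zeta W}$ is defined and lies in $H_-$. Combining the cocycle property (\ref{19}) with (\ref{20}) gives
\[ \frac{\tau_W(q_\zeta q_\omega)}{\tau_W(q_\zeta)} = \tau_{q_\zeta W}(q_\omega) = 1 + \varphi_{q_\zeta W}(\omega), \]
while $\tau_W(q_\omega) = 1 + \varphi_W(\omega)$, so the integrand equals $\varphi_W(\omega) - \varphi_{q_\zeta W}(\omega)$. Both functions belong to $H_-$, hence are holomorphic on $\{|\omega|>r\}$ and vanish at $\infty$ (in particular the apparent pole at $\omega=\zeta$ seen below is removable), so for any $R>r$ the expansions $\sum_n c_n\omega^{-n}$ converge uniformly on $|\omega|=R$ and term-by-term integration extracts the $\omega^{-1}$-coefficient. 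By (\ref{27}) this coefficient is exactly $a_1(\cdot)$, whence
\[ \frac{1}{2\pi i}\int_{|\omega|=R}\left(\tau_W(q_\omega)-\frac{\tau_W(q_\zeta q_\omega)}{\tau_W(q_\zeta)}\right)d\omega = a_1(W) - a_1(q_\zeta W), \]
so that the right-hand side of the asserted identity is $\zeta + a_1(W) - a_1(q_\zeta W)$.

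It remains to compute $a_1(q_\zeta W)$, and this is the step I expect to carry the real content. Here I would invoke the one-step transformation formula (\ref{47}),
\[ 1 + \varphi_{q_\zeta W}(\omega) = \left(1+\varphi_W(\omega)\right)\frac{m_W(\omega)-m_W(\zeta)}{\omega-\zeta}, \]
together with the behaviour of $m_W$ at infinity. From (\ref{26}) one reads off $m_W(\omega)=\omega+O(\omega^{-1})$ (the constant term cancels against the $-a_1(W)$ produced by $(1+\varphi_W)^{-1}$) and $1+\varphi_W(\omega) = 1 + a_1(W)\omega^{-1} + O(\omega^{-2})$. Expanding the right-hand side of (\ref{47}) to order $\omega^{-1}$ yields
\[ \varphi_{q_\zeta W}(\omega) = \frac{a_1(W)+\zeta - m_W(\zeta)}{\omega} + O(\omega^{-2}), \]
so $a_1(q_\zeta W)=\lim_{\omega\to\infty}\omega\varphi_{q_\zeta W}(\omega)=\zeta + a_1(W) - m_W(\zeta)$. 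Substituting into $\zeta + a_1(W) - a_1(q_\zeta W)$ cancels the $\zeta$ and the $a_1(W)$ terms and leaves precisely $m_W(\zeta)$, which is the claim. The only delicate points are the justification of term-by-term integration (secured by $R>r$ and $\varphi\in H_-$) and the bookkeeping in the asymptotic expansion of the last step; both become routine once (\ref{47}) and the expansion of $m_W$ are in hand.
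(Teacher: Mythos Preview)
Your proof is correct and follows essentially the same route as the paper: both reduce the integrand to $\varphi_W(\omega)-\varphi_{q_\zeta W}(\omega)$ via the cocycle property and (\ref{20}), extract $a_1(W)-a_1(q_\zeta W)$ by contour integration, and finish with the identity $m_W(\zeta)=\zeta+a_1(W)-a_1(q_\zeta W)$. The only cosmetic difference is that the paper quotes this last identity directly as (\ref{49}) from the Appendix, whereas you re-derive it from (\ref{47}) by the same asymptotic expansion used there.
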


\begin{proof}
(\ref{49}) reads%
\begin{equation*}
m_{W}\left( \zeta\right) =\zeta+a_{1}(W)-a_{1}(q_{\zeta}W)
\end{equation*}
with $a_{1}(W)$ the first coefficient of the expansion for $\varphi_{W}$,
hence%
\begin{equation*}
a_{1}(W)=\dfrac{1}{2\pi i}\int_{\left\vert \omega\right\vert =R}\varphi
_{W}\left( \omega\right) d\omega\text{.}
\end{equation*}
Since $\varphi_{W}\left( \zeta\right) =\tau_{W}\left( q_{\zeta}\right) -1 $,
we have%
\begin{align*}
m_{W}\left( \zeta\right) & =\zeta+\dfrac{1}{2\pi i}\int_{\left\vert
\omega\right\vert =R}\left( \tau_{W}\left( q_{\omega}\right) -\tau
_{q_{\zeta}W}\left( q_{\omega}\right) \right) d\omega \\
& =\zeta+\dfrac{1}{2\pi i}\int_{\left\vert \omega\right\vert =R}\left(
\tau_{W}\left( q_{\omega}\right) -\dfrac{\tau_{W}\left( q_{\zeta}q_{\omega
}\right) }{\tau_{W}\left( q_{\zeta}\right) }\right) d\omega\text{.}
\end{align*}
\end{proof}

\begin{proposition}
\label{p5}Suppose $g_{n}$, $g\in\mathit{\Gamma}$ and $\tau_{W}\left(
g_{n}\right) \neq0$, $\tau_{W}\left( g\right) \neq0$. If $g_{n}\rightarrow g$
in $H^{2}\left( \partial\mathbb{D}_{r}\right) $, then $m_{g_{n}W}\left(
\zeta\right) \rightarrow m_{gW}\left( \zeta\right) $ for any $\zeta$ such
that $\left\vert \zeta\right\vert >r$, $\tau_{W}\left( q_{\zeta}g\right)
\neq0$.
\end{proposition}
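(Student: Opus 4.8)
The plan is to reduce everything to the continuity of the single fixed functional $\tau_W$ by combining the integral representation of Lemma~\ref{l5} with the cocycle identity (\ref{19}). Since $\tau_W(g)\neq0$ and $\tau_W(g_n)\neq0$, Proposition~\ref{p3}(i) guarantees $gW,g_nW\in Gr^{(2)}$, so $m_{gW}$ and $m_{g_nW}$ are defined. Moreover the hypothesis $\tau_W(q_\zeta g)=\tau_W(gq_\zeta)\neq0$ is, after the cocycle splitting $\tau_W(gq_\zeta)=\tau_W(g)\tau_{gW}(q_\zeta)$ together with $\tau_W(g)\neq0$, exactly the condition $\tau_{gW}(q_\zeta)=1+\varphi_{gW}(\zeta)\neq0$ needed to apply Lemma~\ref{l5} to $gW$ at $\zeta$.

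First I would apply Lemma~\ref{l5} to $gW$ (with any fixed $R>r$) and rewrite each $\tau_{gW}$ occurring there by the cocycle property $\tau_{gW}(h)=\tau_W(gh)/\tau_W(g)$. The factor $\tau_W(g)$ cancels in the ratio, giving
\[
m_{gW}(\zeta)=\zeta+\frac{1}{2\pi i}\int_{|\omega|=R}\left(\frac{\tau_W(gq_\omega)}{\tau_W(g)}-\frac{\tau_W(gq_\zeta q_\omega)}{\tau_W(gq_\zeta)}\right)d\omega,
\]
and the identical formula with $g$ replaced by $g_n$ (valid for all large $n$: multiplication by $q_\zeta$ is bounded on $H^2(\partial\mathbb{D}_r)$, so $g_nq_\zeta\to gq_\zeta$, whence $\tau_W(g_nq_\zeta)\to\tau_W(gq_\zeta)\neq0$ by Proposition~\ref{p3}(iv) and thus $\tau_W(g_nq_\zeta)\neq0$ eventually). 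The merit of this reduction is that the only thing varying with $n$ is now the argument fed into the \emph{fixed} continuous functional $\tau_W$.

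Next I would establish convergence of the integrand. For fixed $\omega$ on the contour, multiplication by $q_\omega$, $q_\zeta$, $q_\zeta q_\omega$ is bounded on $H^2(\partial\mathbb{D}_r)$ (these multipliers are holomorphic across $\overline{\mathbb{D}_r}$ since $R,|\zeta|>r$), so $g_n\to g$ in $H^2$ yields $g_nq_\omega\to gq_\omega$, $g_nq_\zeta\to gq_\zeta$, $g_nq_\zeta q_\omega\to gq_\zeta q_\omega$ in $H^2$; by Proposition~\ref{p3}(iv) each numerator and denominator converges to its counterpart, and the integrand converges pointwise in $\omega$. To pass the limit under the integral I would invoke dominated convergence on the compact contour: the denominators $\tau_W(g_n)$ and $\tau_W(g_nq_\zeta)$ do not depend on $\omega$ and have nonzero limits, hence are bounded away from $0$ for large $n$, while the numerators are uniformly bounded because $\{g_nq_\omega\}$ and $\{g_nq_\zeta q_\omega\}$ lie in totally bounded subsets of $H^2$ (the multiplier norms $\|q_\omega\|$ being uniformly bounded on $|\omega|=R$) on which the continuous $\tau_W$ is bounded. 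Since the contour has finite length, a constant dominates the integrand and dominated convergence gives $m_{g_nW}(\zeta)\to m_{gW}(\zeta)$.

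The main obstacle is this last interchange of limit and integral: pointwise convergence of the integrand alone is insufficient, and one must control it uniformly over the contour. The two facts that make this work are the uniform boundedness of the multiplier norms $\|q_\omega\|$ on $|\omega|=R$ (so that the relevant arguments remain in a fixed compact subset of $H^2$, on which continuity of $\tau_W$ supplies a uniform numerator bound) and the nonvanishing hypotheses, which keep the $\omega$-independent denominators $\tau_W(g_n)$ and $\tau_W(g_nq_\zeta)$ bounded away from $0$.
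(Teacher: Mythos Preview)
Your proof is correct and follows exactly the route the paper takes: the paper's entire proof reads ``The integral representation of Lemma~\ref{l5} completes the proof,'' and you have simply spelled out what that one line means---applying Lemma~\ref{l5} to $gW$ and $g_nW$, rewriting via the cocycle identity so that only the fixed functional $\tau_W$ appears, and then using its $H^2$-continuity (Proposition~\ref{p3}(iv)) together with dominated convergence on the compact contour. The additional care you take with the uniform multiplier bounds for $q_\omega$ on $|\omega|=R$ and with the eventual nonvanishing of $\tau_W(g_nq_\zeta)$ is exactly what is implicitly being used.
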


\begin{proof}
The integral representation of Lemma\ref{l5} completes the proof.\smallskip
\end{proof}

The following proposition says that $q_{W}$ is determined by $m_{W}$.

\begin{proposition}
\label{p6}For $W_{1}$, $W_{2}\in Gr^{\left( 2\right) }$ we have $%
q_{W_{1}}=q_{W_{2}}$ if and only if $m_{W_{1}}=m_{W_{2}}$. Moreover, assume $%
m_{W_{1}}=m_{W_{2}}$. Then, it holds that%
\begin{equation}
m_{gW_{1}}\left( z\right) =m_{gW_{2}}\left( z\right)  \label{33}
\end{equation}
for any $g\in\mathit{\Gamma}$ such that $\tau_{W_{1}}\left( g\right) \neq0$
(hence $\tau_{W_{2}}\left( g\right) \neq0$).
\end{proposition}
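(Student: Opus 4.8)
The plan is to establish the equivalence $q_{W_1}=q_{W_2}\Leftrightarrow m_{W_1}=m_{W_2}$ as two separate implications, and then to deduce the propagation statement (\ref{33}) from this equivalence together with the cocycle property. One implication is essentially free: by Corollary\ref{c3} one has $q_{W_i}(x)=-2\partial_x^2\log\tau_{m_{W_i}}(e_x)$, and Proposition\ref{p4} guarantees $\tau_{m_{W_1}}=\tau_{m_{W_2}}$ as soon as $m_{W_1}=m_{W_2}$, so the two potentials coincide. The substance lies in the converse.

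For the converse I would argue through the logarithmic derivative of the Baker--Akhiezer function. Writing $\eta(x,z)=-f_W'(x,z)/f_W(x,z)$, the equation (\ref{23}) becomes the Riccati equation $\eta'=\eta^2-z^2-q_W$, and by definition $\eta(0,z)=m_W(z)$. Substituting the ansatz $\eta(x,z)=z+\sum_{n\geq1}c_n(x)z^{-n}$ and matching powers of $z$ shows that each $c_n$ is a universal differential polynomial in $q_W$ (the KdV densities $c_1=q_W/2$, $c_2=q_W'/4$, and so on), so the coefficients of the expansion of $m_W$ at $z=\infty$ are determined by $q_W$ alone. The second ingredient is that this expansion is more than asymptotic: by (\ref{26}), $m_W(z)=(z+\psi_W(z))/(1+\varphi_W(z))+a_1(W)$ is holomorphic on $\{|z|>r_W\}$, and a short computation gives $m_W(z)-z\to0$ as $z\to\infty$; hence on $\{|z|>r_W\}$ the function $m_W(z)-z$ equals its convergent Laurent series, which is precisely the series just computed. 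Therefore, if $q_{W_1}=q_{W_2}$ the two functions $m_{W_1},m_{W_2}$ agree on $\{|z|>\max(r_{W_1},r_{W_2})\}$, and since both are meromorphic on the connected set $\{|z|>r\}$, the identity theorem yields $m_{W_1}=m_{W_2}$.

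For the propagation statement I would assume $m_{W_1}=m_{W_2}$ and take $g\in\mathit{\Gamma}$ with $\tau_{W_1}(g)\neq0$; Corollary\ref{c2} supplies $\tau_{W_2}(g)\neq0$, so $gW_1,gW_2\in Gr^{(2)}$. The idea is to compute $q_{gW_i}$ by combining the cocycle identity $\tau_{gW_i}(e_x)=\tau_{W_i}(ge_x)/\tau_{W_i}(g)$ from Proposition\ref{p3} with the factorization $\tau_{W_i}(ge_x)=\rho_{W_i}(ge_x)\tau_{m_{W_i}}(ge_x)$ from Proposition\ref{p4}. Since $\rho_{W_i}$ is a homomorphism and $\rho_{W_i}(e_x)$ is the exponential of a function linear in $x$, the $\rho$-factors and the $x$-independent factor $\tau_{W_i}(g)$ are annihilated by $\partial_x^2$, leaving $q_{gW_i}(x)=-2\partial_x^2\log\tau_{m_{W_i}}(ge_x)$. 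As $\tau_{m_{W_1}}=\tau_{m_{W_2}}$ by Proposition\ref{p4}, this gives $q_{gW_1}=q_{gW_2}$, and the equivalence just proved then yields $m_{gW_1}=m_{gW_2}$.

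I expect the delicate point to be the converse implication, and within it the claim that $q_W$ recovers the full function $m_W$ rather than only its formal expansion: the Riccati matching produces only an asymptotic series, and it is essential to exploit that $m_W$ is holomorphic near $z=\infty$ and normalized so that $m_W(z)-z$ vanishes there, so that the Laurent and asymptotic series coincide and determine $m_W$ on $\{|z|>r_W\}$. A secondary technical care is to keep track of the domains $\mathit{\Gamma}_{W_i}$ on which the factorization of Proposition\ref{p4} is valid when computing $q_{gW_i}$, appealing to the continuity and the extension provided by Proposition\ref{p4} where necessary.
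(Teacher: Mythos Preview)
Your proof is correct. The two pieces differ in packaging from the paper but not in substance.

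For the implication $q_{W_1}=q_{W_2}\Rightarrow m_{W_1}=m_{W_2}$, the paper works with the normalized Baker--Akhiezer function $f_W(x,z)/f_W(0,z)=e^{-xz}(1+\sum_{k\geq1}\widetilde a_k(x)z^{-k})$, noting $\widetilde a_k(0)=0$ and using the recursion (\ref{2}) to recover all $\widetilde a_k$ from $q_W$; then $m_W=-f_W'(0,\cdot)/f_W(0,\cdot)$ is determined. Your Riccati argument for $\eta=-f_W'/f_W$ is the logarithmic-derivative version of exactly this recursion, and the point you flag---that the formal Riccati series is actually the convergent Laurent series of $m_W(z)-z$ on $\{|z|>r_W\}$---is precisely what the paper uses implicitly when it identifies $f_W/f_W(0,\cdot)$ by its expansion in $1+H_-$.

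For the propagation (\ref{33}), the paper takes a more direct route: it invokes the explicit transformation law $m_{q_\zeta W}=d_\zeta m_W$ from (\ref{50}), so $m_{gW}$ depends only on $m_W$ for $g=q_{\zeta_1}\cdots q_{\zeta_n}$, and then passes to general $g\in\mathit\Gamma$ by the continuity of Proposition~\ref{p5}. Your route---show $q_{gW_1}=q_{gW_2}$ via the factorization of Proposition~\ref{p4} and then apply the equivalence---is sound, but the domain caveat you raise is real: the factorization (\ref{32}) and even the definition of $\rho_W$ are only available for $g\in\mathit\Gamma_W$, so your argument as written proves (\ref{33}) for $g\in\mathit\Gamma_{W_1}\cap\mathit\Gamma_{W_2}$. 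To reach all of $\mathit\Gamma$ you must approximate $g$ by $e^{h_n}$ with polynomial $h_n$ (which lie in every $\mathit\Gamma_W$) and invoke Proposition~\ref{p5}, i.e.\ the same continuity step the paper uses. The paper's approach buys a slightly cleaner endgame since (\ref{50}) is already in hand; yours has the merit of avoiding any explicit formula for $m_{gW}$ and relying only on the cocycle and the equivalence.
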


\begin{proof}
Set%
\begin{equation*}
\dfrac{f_{W}(x,z)}{f_{W}(0,z)}=e^{-xz}\left( 1+\dfrac{\widetilde{a}_{1}(x)}{z%
}+\dfrac{\widetilde{a}_{2}(x)}{z^{2}}+\cdots\right) \text{.}
\end{equation*}
Then, (\ref{2}) implies%
\begin{equation*}
\left\{ 
\begin{array}{l}
q_{W}(x)=-2\widetilde{a}_{1}^{\prime}(x)\text{ \ (since }\widetilde{a}%
_{1}(x)=a_{1}(x)-a_{1}(0)\text{)} \\ 
2\widetilde{a}_{k+1}^{\prime}\left( x\right) -\widetilde{a}%
_{k}^{\prime\prime}\left( x\right) -2\widetilde{a}_{1}^{\prime}\left(
x\right) \widetilde{a}_{k}\left( x\right) =0\text{,}\ \ \ k=1,2,\cdots%
\end{array}
\right. \text{.}
\end{equation*}
Due to $\widetilde{a}_{k}\left( 0\right) =0$ for any $k=1,2,\cdots$ we see
that $q_{W}$ determines $f_{W}(x,z)/f_{W}(0,z)$. Keeping this in mind,
suppose $q_{W_{1}}=q_{W_{2}}$. Then%
\begin{equation*}
\dfrac{f_{W_{1}}(x,z)}{f_{W_{1}}(0,z)}=\dfrac{f_{W_{2}}(x,z)}{f_{W_{2}}(0,z)}
\end{equation*}
holds, which implies%
\begin{equation*}
\dfrac{f_{W_{1}}^{\prime}(0,z)}{f_{W_{1}}(0,z)}=\dfrac{f_{W_{2}}^{\prime
}(0,z)}{f_{W_{2}}(0,z)}
\end{equation*}
and $m_{W_{1}}\left( z\right) =m_{W_{2}}\left( z\right) $ follows.
Conversely, if $m_{W_{1}}\left( z\right) =m_{W_{2}}\left( z\right) $, then
Corollary\ref{c3} shows $q_{W_{1}}=q_{W_{2}}$.\newline
To show the identity (\ref{33}) assume $m_{W_{1}}\left( z\right)
=m_{W_{2}}\left( z\right) $. Then, (\ref{50}) shows that $%
m_{gW}=d_{\zeta_{1}}d_{\zeta_{2}}\cdots d_{\zeta_{n}}m_{W}$ for $%
g=q_{\zeta_{1}}q_{\zeta_{2}}\cdots q_{\zeta_{n}}$, hence Proposition\ref{p5}
shows $m_{gW_{1}}(z)=m_{gW_{2}}(z)$ for general $g\in\mathit{\Gamma}$.
\end{proof}

\section{KdV flow}

Let $\mathit{\Gamma}$ be a commutative group and $\mathcal{Q}$ be a set.
Suppose there exists a set of maps $\left\{ K(g)\right\} _{g\in \mathit{%
\Gamma}}$ on $\mathcal{Q}$ satisfying $K(g_{1}g_{2})=K(g_{1})K(g_{2})$ for
any $g_{1}$, $g_{2}\in\mathit{\Gamma}$, we call $\left\{ K(g)\right\} _{g\in%
\mathit{\Gamma}}$ as a flow on $\mathcal{Q}$. The purpose of this section is
to construct such a flow on a certain set of potentials $\mathcal{Q}$ and a
subgroup of the previous $\mathit{\Gamma}$.

\subsection{$m$-function and Weyl functions}

In the last section we defined the $m$-function $m_{W}$ for any $W\in
Gr^{\left( 2\right) }$. On the other hand, for $W\in Gr^{\left( 2\right) }$
a potential $q_{W}$ was introduced by (\ref{21}), and if $q_{w}$ takes real
values, then one can define the Weyl functions $m_{\pm}$. If the
Baker-Akhiezer function $f_{W}$ belongs to $L^{2}\left( \mathbb{R}%
_{+}\right) $, then we have $m(z)=-m_{+}\left( -z^{2}\right) $. In this
subsection we investigate this identity by imposing an additional condition
on $W\in Gr^{\left( 2\right) }$, namely $\tau_{W}\left( g\right) \neq0$ for
any real $g\in\mathit{\Gamma}$.

Recall $\overline{f}\left( z\right) =\overline{f\left( \overline{z}\right) }$
for $f\in H=L^{2}\left( \partial\mathbb{D}_{r}\right) $, and set $\overline{W%
}=\left\{ f\in H\text{; \ }\overline{f}\in W\right\} $ for $W\in Gr^{\left(
2\right) }$. Then, clearly $\overline{W}\in Gr^{\left( 2\right) }$ holds and
an identity $\mathit{\Pi}_{\overline{W}}=\overline{\mathit{\Pi}}_{W}$ is
straightforward. $W\in Gr^{\left( 2\right) }$ is called \textbf{real} if $W=%
\overline{W}$, and this is the case if and only if $\mathit{\Pi}_{W}=%
\overline{\mathit{\Pi}}_{W}$ is valid. Define%
\begin{equation*}
\left\{ 
\begin{array}{l}
\mathit{\Gamma}_{\func{real}}=\left\{ g\in\mathit{\Gamma};\text{ \ }g=%
\overline{g}\right\} \medskip \\ 
Gr_{\func{real}}^{\left( 2\right) }=\left\{ W\in Gr^{\left( 2\right) };\text{
\ }W=\overline{W}\right\} \smallskip \\ 
Gr_{+}^{\left( 2\right) }=\left\{ W\in Gr_{\func{real}}^{\left( 2\right) };%
\text{ \ }\tau_{W}\left( g\right) \geq0\text{ for any }g\in\mathit{\Gamma}_{%
\func{real}}\right\}%
\end{array}
\right. \text{.}
\end{equation*}
If $W\in Gr_{\func{real}}^{\left( 2\right) }$ and $g\in \mathit{\Gamma}_{%
\func{real}}$, then $\tau_{W}\left( g\right) \in\mathbb{R}$. Recall $%
q_{\zeta}\left( z\right) =\left( 1-z/\zeta\right) ^{-1}$ and define a dual
object%
\begin{equation*}
p_{\zeta}\left( z\right) =1+z/\zeta=q_{-\zeta}\left( z\right) ^{-1}\text{.}
\end{equation*}

\begin{lemma}
\label{l6}The followings are valid.\newline
(i) \ If $\tau_{W}\left( g\right) \geq0$ holds for any $g$ of a form $%
g=\prod_{k=1}^{n}q_{\zeta_{k}}q_{\overline{\zeta}_{k}}$ with $%
\zeta_{k}\in\left\{ \left\vert z\right\vert >r\right\} $ and $\func{Im}%
\zeta_{k}\neq0$, then $W\in Gr_{+}^{\left( 2\right) }$ is valid.\newline
(ii) Suppose $\tau_{W}\left( q_{\zeta}q_{\overline{\zeta}}\right) \geq0$ for
any $\zeta\in\left\{ \left\vert z\right\vert >r\right\} $ for a $W\in Gr_{%
\func{real}}^{\left( 2\right) }$. Then $\tau_{W}\left( q_{\zeta}q_{\overline{%
\zeta}}\right) >0$ holds for any $\zeta\in\left\{ \left\vert z\right\vert
>r\right\} $.\newline
(iii) Assume $W\in Gr_{+}^{\left( 2\right) }$. Then, $\tau_{W}\left(
g\right) >0$ holds for any $g\in\mathit{\Gamma }_{\func{real}}$ such that $%
g(z)=\prod_{k=1}^{n}q_{\zeta_{k}}q_{\overline{\zeta}_{k}}$ or $%
g(z)=\prod_{k=1}^{n}p_{\zeta_{k}}p_{\overline{\zeta}_{k}}$ with $%
\zeta_{k}\in\left\{ \left\vert z\right\vert >r\right\} $ and $\func{Im}%
\zeta_{k}\neq0$.
\end{lemma}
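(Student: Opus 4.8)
The plan is to handle the three parts in order, the engine being the product formula for $\tau_W$ on $q_\zeta$'s quoted in the proof of Proposition \ref{p4}, together with the open mapping theorem and, for (i), a density argument. Throughout I use that for real $W$ and real $g$ one has $\tau_W(g)\in\mathbb{R}$, and more precisely $\overline{\varphi_W(\zeta)}=\varphi_W(\overline\zeta)$, $\overline{m_W(\zeta)}=m_W(\overline\zeta)$, which follow from $\overline{\tau_W(g)}=\tau_{\overline W}(\overline g)$, $\overline W=W$, together with (\ref{20}) and (\ref{26}).

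For (i) I would reduce to a density statement. Since $\tau_W$ is continuous on $\mathit{\Gamma}$ in the $H^2$-norm (Proposition \ref{p3}(iv)) and real on $\mathit{\Gamma}_{\func{real}}$, it suffices to show the products $g=\prod_{k=1}^{n}q_{\zeta_k}q_{\overline\zeta_k}$ with $|\zeta_k|>r$, $\func{Im}\zeta_k\neq0$ are dense in $\mathit{\Gamma}_{\func{real}}$; then $\tau_W\ge0$ on a dense subset propagates to all of $\mathit{\Gamma}_{\func{real}}$, i.e. $W\in Gr_+^{(2)}$. Writing $w=\zeta^{-1}$, such a product has exponent $h(z)=\sum_{j\ge1}\frac{z^{j}}{j}\sum_k 2\func{Re}(w_k^{\,j})$, so the attainable $h$ form a convex cone. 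This cone is dense: a real functional $(\phi_j)$ nonnegative on every generator $z\mapsto 2\func{Re}(w^{j})$ gives $\func{Re}\Phi\ge0$ on the punctured disc, where $\Phi(w)=\sum_{j\ge1}\phi_j w^{j}$ and $\Phi(0)=0$; the minimum principle forces $\Phi\equiv0$, so the dual cone is trivial and the cone is dense.

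For (ii) I would start from
\[ \tau_W(q_{\zeta_1}q_{\zeta_2})=(1+\varphi_W(\zeta_1))(1+\varphi_W(\zeta_2))\frac{m_W(\zeta_1)-m_W(\zeta_2)}{\zeta_1-\zeta_2}, \]
set $\zeta_1=\zeta$, $\zeta_2=\overline\zeta$, and use realness of $W$ to obtain
\[ \tau_W(q_\zeta q_{\overline\zeta})=\left|1+\varphi_W(\zeta)\right|^{2}\,\frac{\func{Im}m_W(\zeta)}{\func{Im}\zeta}. \]
The hypothesis then reads $\func{Im}m_W/\func{Im}\zeta\ge0$ wherever $1+\varphi_W\neq0$; a pole of $m_W$ in the open upper region would make $\func{Im}m_W$ change sign, so in fact $m_W$ is holomorphic there with nonnegative imaginary part, hence Nevanlinna. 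Since $m_W(z)=z+O(1)$ at infinity it is non-constant, so by the open mapping theorem $\func{Im}m_W>0$ strictly off $\mathbb{R}$ and $1+\varphi_W$ has no zero there, giving $\tau_W(q_\zeta q_{\overline\zeta})>0$ for $\func{Im}\zeta\neq0$. For real $\zeta$ I pass to the limit $\zeta_2\to\zeta_1$, getting $\tau_W(q_\zeta^{2})=(1+\varphi_W(\zeta))^{2}m_W'(\zeta)$ with $m_W'>0$ (a non-constant Nevanlinna function with leading coefficient $1$), the real poles being dealt with by the corresponding limiting computation, which again yields a positive value.

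For (iii) the $q$-part is an induction on $n$ via the cocycle property (\ref{19}): with $W_0=W$ and $W_k=q_{\zeta_k}q_{\overline\zeta_k}W_{k-1}$, part (ii) gives $\tau_{W_{k-1}}(q_{\zeta_k}q_{\overline\zeta_k})>0$ once $W_{k-1}\in Gr_+^{(2)}$, and for any real $g'$ the identity $\tau_{W_k}(g')=\tau_{W_{k-1}}(q_{\zeta_k}q_{\overline\zeta_k}g')/\tau_{W_{k-1}}(q_{\zeta_k}q_{\overline\zeta_k})$ has nonnegative numerator and positive denominator, so $W_k\in Gr_+^{(2)}$ and the induction closes; telescoping then gives $\tau_W(\prod q_{\zeta_k}q_{\overline\zeta_k})=\prod_k\tau_{W_{k-1}}(q_{\zeta_k}q_{\overline\zeta_k})>0$. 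The $p$-part I would run in exact parallel, the one new ingredient being a companion of the quoted formula for $\tau_W(p_{\zeta_1}p_{\zeta_2})$: as in (\ref{20}) the operator $R_W(p_\zeta)$ has finite rank (here $p_\zeta=q_{-\zeta}^{-1}$), and the same rank-two determinant computation produces an expression $\tau_W(p_\zeta p_{\overline\zeta})=|{\cdot}|^{2}\,\func{Im}n_W(\zeta)/\func{Im}\zeta$ with $n_W$ a Nevanlinna function attached to $W$, after which (ii)'s open-mapping argument and the same cocycle induction apply verbatim. I expect the genuine obstacle to be precisely this $p$-family step: establishing and sign-analysing $n_W$ (equivalently, realising the exchange $q_\zeta\leftrightarrow p_\zeta$ through the involution $\boldsymbol W\mapsto\widetilde{\boldsymbol W}$ of Section 3 and checking it preserves $Gr_+^{(2)}$) is the only piece not already delivered by (\ref{51}); once it is in place, (ii) and (iii) become formally identical, the conceptual core being in every case the open mapping theorem that upgrades $\ge0$ to $>0$.
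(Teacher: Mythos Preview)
Your treatment of (ii) and of the $q$-half of (iii) matches the paper's: the formula (\ref{51}) specialised to $\zeta_2=\overline{\zeta_1}$ gives $\tau_W(q_\zeta q_{\overline\zeta})=|1+\varphi_W(\zeta)|^2\,\func{Im}m_W(\zeta)/\func{Im}\zeta$, and one upgrades $\ge 0$ to $>0$ by the minimum principle for the harmonic function $\func{Im}m_W$ (the paper phrases it this way rather than via the open mapping theorem, but the content is the same). The cocycle induction you give for the $q$-product is exactly the paper's.

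For (i) the paper also argues by density plus continuity, but with a concrete approximation instead of your dual-cone argument: for $g=e^h\in\mathit{\Gamma}_{\func{real}}$ one truncates $h$ to an even-degree real polynomial $h_n$, sets $g_n=(1-h_n/n)^{-n}$, and observes (after an arbitrarily small perturbation) that $1-h_n/n$ has no real zeros, hence factors over conjugate pairs, so $g_n$ is a product $\prod q_{\zeta_k}q_{\overline{\zeta_k}}$. Your functional/minimum-principle route is a legitimate alternative, but the paper's factorisation avoids any discussion of the topology on $\mathit{\Gamma}$ in which the cone is to be dense.

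The one place where you and the paper genuinely diverge is the $p$-half of (iii). You flag this as the ``genuine obstacle'' and propose building a parallel theory (a second Nevanlinna function $n_W$, or the duality $\boldsymbol{W}\mapsto\widetilde{\boldsymbol{W}}$). The paper bypasses all of this with an algebraic one-liner: since $p_\zeta=q_\zeta r_\zeta^{-1}$ with $r_\zeta(z)=(1-z^2/\zeta^2)^{-1}$ an \emph{even} function of $z$, Proposition~\ref{p3}(iii) gives multiplicativity of $\tau_W$ against $r_\zeta^{\pm1}$, and (\ref{52}) yields
\[
\tau_W\Bigl(\prod_k p_{\zeta_k}p_{\overline{\zeta_k}}\Bigr)
=\Bigl(\prod_k \tau_W(r_{\zeta_k})\tau_W(r_{\overline{\zeta_k}})\Bigr)^{-1}
\tau_W\Bigl(\prod_k q_{\zeta_k}q_{\overline{\zeta_k}}\Bigr).
\]
For real $W$ one has $\tau_W(r_{\overline\zeta})=\overline{\tau_W(r_\zeta)}$, and (\ref{53}) gives $\tau_W(r_\zeta)=\det\mathit{\Pi}_W(\zeta^2)\neq 0$, so the prefactor is a product of strictly positive numbers $|\tau_W(r_{\zeta_k})|^2$, while the last factor is $>0$ by the $q$-case already proved. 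Thus the $p$-case is an immediate corollary of the $q$-case, not an independent problem. Your plan would work, but it rebuilds machinery that the identity $p_\zeta q_\zeta^{-1}=r_\zeta^{-1}$ makes unnecessary.
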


\begin{proof}
To show (i) let $g=e^{h}\in\mathit{\Gamma}_{\func{real}}$, $%
h(z)=\sum_{k=1}^{\infty}h_{k}z^{k}$ and $h_{n}(z)=\sum_{k=1}^{2n}h_{k}z^{k}$%
. Set $g_{n}(z)=\left( 1-h_{n}(z)/n\right) ^{-n}$. Then, $g(z)=\lim
_{n\rightarrow\infty}g_{n}(z)$ holds. We have only to show $\tau_{W}\left(
g_{n}\right) \geq0$ for sufficiently large $n$ due to the continuity of $%
\tau_{W}$. One can assume $h_{2n}\neq0$ and $1-h_{n}\left( z\right) /n$ has
no real zeros, since, otherwise we have only to deform slightly $h$. Then,
there exist $\zeta_{k}\in\left\{ \left\vert z\right\vert >r\right\} $ and $%
\func{Im}\zeta_{k}\neq0$ for $k=1,2,\cdots,n$ such that%
\begin{equation}
g_{n}(z)=\left( 1-\dfrac{h_{n}(z)}{n}\right) ^{-n}=\left( \dprod
\limits_{k=1}^{n}q_{\zeta_{k}}\left( z\right) q_{\overline{\zeta_{k}}}\left(
z\right) \right) ^{n}  \label{34}
\end{equation}
holds, which proves (i).\newline
To prove (ii) note that%
\begin{align*}
\det\mathit{\Pi}_{W}\left( z^{2}\right) & =\left( 1+\varphi_{W,e}\left(
z^{2}\right) \right) \left( 1+\psi_{W,o}\left( z^{2}\right) \right)
-\varphi_{W,o}\left( z^{2}\right) \psi_{W,e}\left( z^{2}\right) \\
& =\dfrac{\left( 1+\varphi_{W}\left( -z\right) \right) \left( z+\psi
_{W}\left( z\right) \right) -\left( 1+\varphi_{W}\left( z\right) \right)
\left( -z+\psi_{W}\left( -z\right) \right) }{2z}
\end{align*}
is valid, hence $1+\varphi_{W}\left( z\right) $ and $z+\psi_{W}\left(
z\right) $ do not vanish simultaneously due to $\det\mathit{\Pi}_{W}\left(
z\right) \neq0$. Suppose%
\begin{equation*}
\tau_{W}\left( q_{\zeta_{1}}q_{\overline{\zeta}_{1}}\right) =0\text{ \ and \ 
}1+\varphi_{W}\left( \zeta_{1}\right) \neq0\text{ \ for }\zeta_{1}\text{
such that }\func{Im}\zeta_{1}>0\text{.}
\end{equation*}
From (\ref{51})%
\begin{equation*}
\tau_{W}\left( q_{\zeta}q_{\overline{\zeta}}\right) =\left\vert
1+\varphi_{W}\left( \zeta\right) \right\vert ^{2}\dfrac{\func{Im}m_{W}\left(
\zeta\right) }{\func{Im}\zeta}
\end{equation*}
holds, hence\ $\func{Im}m_{W}\left( \zeta\right) \geq0$ is valid\ if \ $%
\func{Im}\zeta>0$. Therefore, if $\tau_{W}\left( q_{\zeta_{1}}q_{\overline{%
\zeta_{1}}}\right) =0$, then $\func{Im}m_{W}\left( \zeta\right) =0$
identically due to the fact that $\func{Im}m_{W}\left( \zeta\right) $ is
harmonic and $\func{Im}m_{W}\left( \zeta_{1}\right) =0$. However, as $%
\zeta\rightarrow\infty$, $\tau_{W}\left( q_{\zeta}q_{\overline{\zeta}%
}\right) \rightarrow1$ holds, which leads us to contradiction. The case $%
\zeta_{1}+\psi_{W}\left( \zeta_{1}\right) \neq0$ can be treated similarly.
Thus $\tau_{W}\left( q_{\zeta_{1}}q_{\overline {\zeta}_{1}}\right) >0$
should hold for any $\zeta_{1}$.\newline
We prove (iii) by induction. For $n=1$ (ii) implies the strict positivity of 
$\tau _{W}\left( q_{\zeta}q_{\overline{\zeta}}\right) $. Assume $%
\tau_{W}\left( g_{1}\right) >0$ is valid for $g_{1}=\prod_{k=1}^{n-1}q_{%
\zeta_{k}}q_{\overline{\zeta}_{k}}$. For any $h\in\mathit{\Gamma}_{\func{real%
}} $%
\begin{equation*}
\tau_{g_{1}W}\left( h\right) =\frac{\tau_{W}\left( g_{1}h\right) }{%
\tau_{W}\left( g_{1}\right) }\geq0
\end{equation*}
holds, hence $g_{1}W\in Gr_{2}^{+}$ and the argument above shows $\tau
_{g_{1}W}\left( q_{\zeta_{n}}q_{\overline{\zeta}_{n}}\right) >0$. Now, for $%
g=g_{1}q_{\zeta_{n}}q_{\overline{\zeta}_{n}}$ an identity%
\begin{equation*}
\tau_{W}\left( g\right) =\tau_{g_{1}W}\left( q_{\zeta_{n}}q_{\overline {\zeta%
}_{n}}\right) \tau_{W}\left( g_{1}\right)
\end{equation*}
shows $\tau_{W}\left( g\right) >0$. For $g(z)=\prod_{k=1}^{n}p_{\zeta_{k}}p_{%
\overline{\zeta}_{k}}$ the identity (\ref{52}) in the Appendix implies%
\begin{equation*}
\tau_{W}\left( p_{\zeta_{1}}p_{\zeta_{2}}\cdots p_{\zeta_{n}}\right) =\left(
\tau_{W}\left( r_{\zeta_{1}}\right) \tau_{W}\left( r_{\zeta_{2}}\right)
\cdots\tau_{W}\left( r_{\zeta_{n}}\right) \right) ^{-1}\tau _{W}\left(
q_{\zeta_{1}}q_{\zeta_{2}}\cdots q_{\zeta_{n}}\right) \text{.}
\end{equation*}
Since $\tau_{W}\left( r_{\zeta}\right) \neq0$, the proof is complete.$%
\medskip$
\end{proof}

\textit{Proof of Theorem 1.} Assume $W\in Gr_{\func{real}}^{\left( 2\right)
} $ satisfies $gW\in Gr^{\left( 2\right) }$ for any $g\in \mathit{\Gamma}_{%
\func{real}}$. Then, $\tau_{W}\left( q_{\overline {\zeta}}q_{\zeta}\right)
\neq0$ holds for any $\zeta\in\left\{ \left\vert z\right\vert >r\right\} $.
Since $\tau_{W}\left( q_{\overline{\zeta}}q_{\zeta}\right) $ takes real
values and approaches to $1$ as $\zeta \rightarrow\infty$, we have $%
\tau_{W}\left( q_{\overline{\zeta}}q_{\zeta }\right) >0$. Inductively one
can see $\tau_{W}\left( g\right) >0$ for any $g=\prod_{k=1}^{n}q_{%
\zeta_{k}}q_{\overline{\zeta}_{k}}$ with $\zeta_{k}\in\left\{ \left\vert
z\right\vert >r\right\} $, which shows $W\in Gr_{+}^{\left( 2\right) }$ due
to (i) of Lemma\ref{l6}.

To show the converse direction let $g$ be in $\mathit{\Gamma}_{\func{real}}$
and $g_{n}$ be the function defined in (\ref{34}) by replacing $g$ with $%
g^{-1}$, hence $g_{n}\rightarrow g^{-1}$ in this case. Then, Lemma\ref{l6}
shows $\tau_{W}\left( g_{n}\right) >0$. On the other hand, since $\tau_{W}$
is continuous and $\tau_{W}\left( 1\right) =1$, we see%
\begin{equation*}
\tau_{W}\left( gg_{n}\right) >0
\end{equation*}
for sufficiently large $n$. Since $gg_{n}W\in Gr_{+}^{\left( 2\right) }$ is
valid (see the argument in the proof of (iii) of Lemma \ref{l6}), from Lemma%
\ref{l6} it follows that $\tau_{gg_{n}W}\left( p\right) >0$ for any $p$ of
the form $\prod_{k=1}^{n}p_{\zeta_{k}^{\prime}}p_{\overline{\zeta}%
_{k}^{\prime}}$. Therefore%
\begin{equation*}
\tau_{W}\left( gg_{n}p\right) =\tau_{gg_{n}W}\left( p\right) \tau _{W}\left(
gg_{n}\right) >0
\end{equation*}
is valid. Now, taking $p=g_{n}^{-1}$, we see $\tau_{W}\left( g\right) >0$,
which shows $gW\in Gr_{+}^{\left( 2\right) }$. This completes the proof. $%
\blacksquare$\medskip

\begin{corollary}
\label{c4}Suppose $W\in Gr_{+}^{\left( 2\right) }$. Then, the followings are
valid.\newline
(i) \ $a\left( 1+\varphi_{W}(z)\right) +b\left( z+\psi _{W}(z)\right) $ has
no zeros in $\left\{ \left\vert z\right\vert >r\right\} \cap\left( \mathbb{C}%
\backslash\mathbb{R}\right) $ for any $a,b\in\mathbb{R}$ such that $%
\left\vert a\right\vert +\left\vert b\right\vert \neq0$. Moreover, $%
1+\varphi_{W}(x)>0$ holds for any $x\in\mathbb{R}$ such that $\left\vert
x\right\vert >r$.\newline
(ii) $m_{W}(z)$ is holomorphic on $\left\{ \left\vert z\right\vert
>r\right\} $ and has no zeros in $\left\{ \left\vert z\right\vert >r\right\}
\cap\left( \mathbb{C}\backslash \mathbb{R}\right) $. Moreover, $m_{W}$
satisfies%
\begin{equation*}
\left\{ 
\begin{array}{l}
\dfrac{\func{Im}m_{W}(z)}{\func{Im}z}>0\text{ \ for }z\in\left\{ \left\vert
z\right\vert >r\right\} \text{ with }\func{Im}z\neq0 \\ 
\dfrac{m_{W}(x)-m_{W}(-x)}{2x}>0\text{, \ }m_{W}^{\prime}(x)>0\text{ \ on }%
\mathbb{R}\backslash\left[ -r,r\right] \text{.}%
\end{array}
\right.
\end{equation*}
\end{corollary}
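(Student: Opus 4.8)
The plan is to derive everything from two positivity facts, then spend the real effort only on the final strict derivative inequality. The first fact is that $\tau_{W}(g)>0$ for \emph{every} $g\in\mathit{\Gamma}_{\func{real}}$: indeed $\tau_{W}(g)\geq0$ by the definition of $Gr_{+}^{\left(2\right)}$, while Theorem\ref{t2} gives $gW\in Gr^{\left(2\right)}$, hence $\tau_{W}(g)\neq0$ by Proposition\ref{p3}(i), and the two together force strict positivity. Applying this to $g=q_{x}$ with real $\left\vert x\right\vert>r$ and invoking (\ref{20}) gives at once $1+\varphi_{W}(x)=\tau_{W}(q_{x})>0$, which is the ``moreover'' clause of (i); in particular $1+\varphi_{W}$ has no zeros on $\mathbb{R}\backslash\lbrack-r,r]$. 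I would also record here that $a_{1}(W)$ is real, since $W$ is real.

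Next I would establish the off-axis non-vanishing of $1+\varphi_{W}$ together with the Herglotz inequality. By Lemma\ref{l6}(ii) we have $\tau_{W}(q_{\zeta}q_{\overline{\zeta}})>0$ for all $\left\vert\zeta\right\vert>r$. Since $\det\mathit{\Pi}_{W}(\zeta)\neq0$ shows (as already noted in the proof of Lemma\ref{l6}(ii)) that $1+\varphi_{W}$ and $\zeta+\psi_{W}$ never vanish simultaneously, if $1+\varphi_{W}(\zeta)=0$ for some $\func{Im}\zeta\neq0$ then realness gives $1+\varphi_{W}(\overline{\zeta})=0$ as well, and (\ref{51}) then forces $\tau_{W}(q_{\zeta}q_{\overline{\zeta}})=0$, a contradiction. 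Hence $1+\varphi_{W}(\zeta)\neq0$ whenever $\func{Im}\zeta\neq0$, so $m_{W}(\zeta)$ is pole-free off the real axis, and rewriting (\ref{51}) as
\[
\tau_{W}(q_{\zeta}q_{\overline{\zeta}})=\left\vert 1+\varphi_{W}(\zeta)\right\vert^{2}\,\frac{\func{Im}m_{W}(\zeta)}{\func{Im}\zeta}
\]
yields $\func{Im}m_{W}(\zeta)/\func{Im}\zeta>0$ there. Combined with the non-vanishing on $\mathbb{R}\backslash\lbrack-r,r]$ from the first step, $1+\varphi_{W}$ has no zeros anywhere in $\left\{\left\vert z\right\vert>r\right\}$, so by (\ref{26}) the function $m_{W}=(z+\psi_{W})/(1+\varphi_{W})+a_{1}(W)$ is holomorphic on $\left\{\left\vert z\right\vert>r\right\}$.

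With these in hand the remaining assertions are short. For (i) with $b\neq0$ I factor, using (\ref{26}),
\[
a\left(1+\varphi_{W}(z)\right)+b\left(z+\psi_{W}(z)\right)=\left(1+\varphi_{W}(z)\right)\bigl(a+b(m_{W}(z)-a_{1}(W))\bigr),
\]
and since $1+\varphi_{W}(z)\neq0$ off $\mathbb{R}$, a zero would force $m_{W}(z)=a_{1}(W)-a/b\in\mathbb{R}$, impossible because $\func{Im}m_{W}(z)\neq0$ when $\func{Im}z\neq0$ (the case $b=0$ being immediate). The same statement $\func{Im}m_{W}(z)\neq0$ gives that $m_{W}$ has no zeros off the real axis in (ii), and the inequality $\func{Im}m_{W}/\func{Im}z>0$ is exactly what was derived above. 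For $\frac{m_{W}(x)-m_{W}(-x)}{2x}>0$ on $\mathbb{R}\backslash\lbrack-r,r]$ I would apply the first step to $g=q_{x}q_{-x}\in\mathit{\Gamma}_{\func{real}}$: by (\ref{51}), $\tau_{W}(q_{x}q_{-x})=(1+\varphi_{W}(x))(1+\varphi_{W}(-x))\frac{m_{W}(x)-m_{W}(-x)}{2x}>0$, while the two factors $1+\varphi_{W}(\pm x)$ are positive by (i).

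The one step needing genuine care is the strict inequality $m_{W}^{\prime}(x)>0$. Since $m_{W}$ is analytic near each real $x$ with $\left\vert x\right\vert>r$, real there, and satisfies $\func{Im}m_{W}(z)>0$ for $\func{Im}z>0$ nearby, I would argue from the local expansion $m_{W}(z)=m_{W}(x)+c(z-x)^{k}+\cdots$ with $c$ real and $c\neq0$: if the first non-vanishing order were $k\geq2$, then $\func{Im}m_{W}(x+\epsilon e^{i\theta})\sim c\epsilon^{k}\sin(k\theta)$ would change sign as $\theta$ ranges over $(0,\pi)$, contradicting positivity of the imaginary part; hence $k=1$ and $m_{W}^{\prime}(x)=c>0$, the sign being fixed by $\func{Im}m_{W}>0$. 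This reflection argument is the only place where one must go beyond the determinant identities and the positivity of $\tau_{W}$, and it is the main obstacle in the sense that it genuinely uses the analyticity of $m_{W}$ at the boundary interval rather than the algebraic formulas alone.
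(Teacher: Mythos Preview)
Your argument is correct throughout, and for parts (i), the Herglotz inequality, and the odd-part inequality $\frac{m_{W}(x)-m_{W}(-x)}{2x}>0$ it is essentially the paper's proof reorganised: the paper uses the same identities (\ref{20}), (\ref{51}) and $\tau_{W}(r_{x})$ (i.e.\ (\ref{53})) together with Theorem~\ref{t2}, only it handles the general linear combination $a(1+\varphi_{W})+b(z+\psi_{W})$ by writing $\tau_{W}(q_{\zeta}q_{\overline{\zeta}})$ directly in terms of that combination, whereas you first pin down $1+\varphi_{W}\neq0$ and then factor via $m_{W}$. Both are fine.

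The genuine divergence is in the derivative estimate $m_{W}^{\prime}(x)>0$. You treat this as the one place where one must leave the determinant identities and appeal to local analyticity, via the standard open-mapping/Herglotz argument on the first nonvanishing Taylor coefficient. The paper, by contrast, stays entirely within the $\tau$-function calculus: it takes $g=q_{x}^{2}\in\mathit{\Gamma}_{\func{real}}$, so that $\tau_{W}(q_{x}^{2})>0$ by Theorem~\ref{t2}, and reads off from the diagonal limit $\zeta_{1},\zeta_{2}\to x$ of (\ref{51}) the identity $\tau_{W}(q_{x}^{2})=(1+\varphi_{W}(x))^{2}\,m_{W}^{\prime}(x)$, whence $m_{W}^{\prime}(x)>0$ directly. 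So what you flagged as ``the main obstacle'' that ``genuinely uses the analyticity of $m_{W}$ \dots\ rather than the algebraic formulas alone'' in fact admits exactly the same algebraic treatment as the other inequalities. Your route has the virtue of being self-contained complex analysis and of explaining \emph{why} the Herglotz property forces the derivative sign; the paper's route keeps the whole corollary uniformly inside the $\tau$-function machinery and is shorter.
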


\begin{proof}
For $\zeta\in\left\{ \left\vert z\right\vert >r\right\} \cap\left( \mathbb{C}%
\backslash\mathbb{R}\right) $%
\begin{equation*}
\tau_{W}\left( q_{\zeta}q_{\overline{\zeta}}\right) =\dfrac{1}{\zeta-%
\overline{\zeta}}\left( 
\begin{array}{c}
\left( \zeta+\psi_{W}\left( \zeta\right) \right) \left( \overline {%
1+\varphi_{W}\left( \zeta\right) +b\left( \zeta+\psi_{W}\left( \zeta\right)
\right) }\right) \\ 
-\left( \overline{\zeta+\psi_{W}\left( \zeta\right) }\right) \left(
1+\varphi_{W}\left( \zeta\right) +b\left( \zeta+\psi_{W}\left( \zeta\right)
\right) \right)%
\end{array}
\right)
\end{equation*}
is valid. Hence, if $1+\varphi_{W}\left( \zeta\right) +b\left( \zeta
+\psi_{W}\left( \zeta\right) \right) =0$, then $\tau_{W}\left( q_{\zeta }q_{%
\overline{\zeta}}\right) =0$, which contradicts Theorem\ref{t2}. Similarly
we have $a\left( 1+\varphi_{W}\left( \zeta\right) \right)
+\zeta+\psi_{W}\left( \zeta\right) \neq0$. On the other hand, since $q_{x}\in%
\mathit{\Gamma}_{\func{real}}$ for $x\in\mathbb{R}$ and $\left\vert
x\right\vert >r $, Theorem\ref{t2} implies%
\begin{equation*}
1+\varphi_{W}\left( x\right) =\tau_{W}\left( q_{x}\right) >0\text{,}
\end{equation*}
which shows (i). The first inequality of (ii) follows from%
\begin{equation*}
\tau_{W}\left( q_{\zeta}q_{\overline{\zeta}}\right) =\left\vert
1+\varphi_{W}(\zeta)\right\vert ^{2}\func{Im}m_{W}\left( \zeta\right) /\func{%
Im}\zeta\text{.}
\end{equation*}
The second two inequalities are shown by (\ref{53}) and%
\begin{equation*}
\tau_{W}\left( q_{x}^{2}\right) =\left( 1+\varphi_{W}(x)\right) ^{2}m_{W}(x)%
\text{.}
\end{equation*}
\medskip\medskip
\end{proof}

Theorem\ref{t2} shows that the group $\mathit{\Gamma}_{\func{real}}$ acts on 
$Gr_{+}^{\left( 2\right) }$. Corollary\ref{c2} implies that the
non-vanishing property of $\tau_{W}\left( g\right) $ for $g\in \mathit{\Gamma%
}_{\func{real}}$ can be stated only by $m_{W}$, hence the next task is to
find some concrete criterion in terms of $m_{W}$ for $W\in Gr^{\left(
2\right) }$ to be an element of $Gr_{+}^{\left( 2\right) }$.

To proceed further we prepare some results from the spectral theory of one
dimensional Schr\"{o}dinger operators. For a real valued $q\in
L_{loc}^{1}\left( \mathbb{R}\right) $ let $L_{q}$ be a Schr\"{o}dinger
operator defined by%
\begin{equation*}
\left( L_{q}f\right) \left( x\right) =-f^{\prime\prime}(x)+q(x)f(x)\text{,}
\end{equation*}
and for $\lambda\in\mathbb{C}$ consider a time independent Schr\"{o}dinger
equation%
\begin{equation}
\left( L_{q}f\right) \left( x\right) =\lambda f(x)\text{.}  \label{35}
\end{equation}

\begin{lemma}
\label{l7}(see \cite{c-l})There occur two cases on the behavior of solutions
to (\ref{35}) at $+\infty$.\newline
(i) \ Limit circle type: \ $\dim\left\{ f\in L^{2}\left( \mathbb{R}%
_{+}\right) \text{; \ }L_{q}f=\lambda f\right\} =2$ for any $\lambda\in%
\mathbb{C}$.\newline
(ii) Limit point type: \ $\dim\left\{ f\in L^{2}\left( \mathbb{R}_{+}\right) 
\text{; \ }L_{q}f=\lambda f\right\} =1$ for any $\lambda\in\mathbb{C}%
\backslash \mathrm{sp}L_{+}$.
\end{lemma}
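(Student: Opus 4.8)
The plan is to reproduce Weyl's classical limit point/limit circle analysis, in line with the attribution to \cite{c-l}; I only sketch the structure. First I would fix $\lambda_{0}$ with $\func{Im}\lambda_{0}>0$ and choose the fundamental system $\theta(\cdot,\lambda_{0})$, $\phi(\cdot,\lambda_{0})$ of $L_{q}f=\lambda_{0}f$ normalized by $\theta(0)=\phi'(0)=1$, $\theta'(0)=\phi(0)=0$, so that their Wronskian is $1$. For each finite $b>0$ and each real boundary parameter $\beta$ I impose the self-adjoint condition $\cos\beta\,f(b)+\sin\beta\,f'(b)=0$ on $\psi=\theta+m\phi$; solving for $m=m(b,\beta)$ exhibits it as a M\"{o}bius image of $e^{2i\beta}$, so as $\beta$ varies $m$ runs over a circle $C_{b}$ bounding a closed disk $D_{b}$ in the $m$-plane.

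Next, the Lagrange (Green) identity
\[
(\lambda_{0}-\overline{\lambda_{0}})\int_{0}^{b}|\psi|^{2}\,dx=\left[\psi\,\overline{\psi}'-\psi'\,\overline{\psi}\right]_{0}^{b}
\]
identifies $D_{b}$ with the set of $m$ for which $\int_{0}^{b}|\theta+m\phi|^{2}\,dx\le\func{Im}m/\func{Im}\lambda_{0}$, from which one reads off that the disks are nested, $D_{b'}\subset D_{b}$ for $b'>b$, with radius $r_{b}=\bigl(2\func{Im}\lambda_{0}\int_{0}^{b}|\phi|^{2}\,dx\bigr)^{-1}$. Letting $b\to\infty$, the nested disks shrink either to a single point $m_{\infty}$ (limit point case, $\int_{0}^{\infty}|\phi|^{2}=\infty$) or to a disk of positive radius (limit circle case, $\phi\in L^{2}(\mathbb{R}_{+})$). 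In the latter case every interior $m$ gives $\theta+m\phi\in L^{2}$, and two distinct such $m$ force both $\theta,\phi\in L^{2}$, so $\dim\{f\in L^{2}(\mathbb{R}_{+});L_{q}f=\lambda_{0}f\}=2$; in the former only $\theta+m_{\infty}\phi$ is square integrable, giving dimension $1$.

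The crucial point, and the main technical obstacle, is that this classification is a property of $q$ and not of $\lambda$. If both solutions lie in $L^{2}(\mathbb{R}_{+})$ at $\lambda=\lambda_{0}$, then for arbitrary $\lambda$ I write $L_{q}f-\lambda_{0}f=(\lambda-\lambda_{0})f$ and solve by variation of parameters against the $L^{2}$ basis at $\lambda_{0}$; a Gronwall estimate on the resulting Volterra integral equation (using Cauchy--Schwarz together with $\theta(\cdot,\lambda_{0}),\phi(\cdot,\lambda_{0})\in L^{2}$) shows that every solution at $\lambda$ is again square integrable. Hence the limit circle case holds for all $\lambda\in\mathbb{C}$ simultaneously, which is exactly (i); otherwise we are in the limit point case for every $\lambda$. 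It is this $\lambda$-uniformity, rather than the disk construction itself, that carries the weight of the proof.

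Finally, in the limit point case I settle the dimension count in (ii). For $\func{Im}\lambda\neq0$ the construction already yields exactly one $L^{2}$ solution. For real $\lambda\notin\mathrm{sp}\,L_{+}$, where $L_{+}$ is the self-adjoint realization of $L_{q}$ on $\mathbb{R}_{+}$ (with a boundary condition at $0$), the resolvent $(L_{+}-\lambda)^{-1}$ is bounded, so the decaying Weyl solution persists and stays unique up to scalars, since two independent $L^{2}$ solutions would force the whole solution space into $L^{2}$ and contradict limit point type; thus $\dim\{f\in L^{2}(\mathbb{R}_{+});L_{q}f=\lambda f\}=1$. At points of $\mathrm{sp}\,L_{+}$ the dimension may drop to $0$, which is precisely why the statement excludes the spectrum. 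This completes the alternative.
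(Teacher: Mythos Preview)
The paper does not actually prove this lemma; it simply cites Coddington--Levinson \cite{c-l} and moves on. Your sketch of Weyl's nested-disk argument, the $\lambda$-independence via variation of parameters and Gronwall, and the dimension count in the limit point case is the standard proof found in that reference, and it is correct.
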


The boundary $-\infty$ has also the same classification. If the boundary $%
+\infty$ is of limit point type, the operator $L_{+}$ is uniquely extendable
as a self-adjoint operator in $L^{2}\left( \mathbb{R}_{+}\right) $, where $%
L_{+}$ is the Schr\"{o}dinger operator $L_{q}$ restricted to $L^{2}\left( 
\mathbb{R}_{+}\right) $ with Dirichlet boundary condition at $0$.

\begin{lemma}
\label{l8}Suppose the boundaries $\pm\infty$ are of limit point type. If
there exists a positive solution $f$ to (\ref{35}), then, $\lambda\leq\inf$ 
\textrm{sp}$L_{q}$ holds.
\end{lemma}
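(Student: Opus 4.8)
The plan is to reduce the statement to a quadratic--form inequality and then read off the spectral bound from the variational characterisation of $\inf\mathrm{sp}\,L_{q}$. Concretely, I would establish
\[
\int_{\mathbb{R}}\left(\left\vert\phi'\right\vert^{2}+q\left\vert\phi\right\vert^{2}\right)dx\geq\lambda\int_{\mathbb{R}}\left\vert\phi\right\vert^{2}dx\qquad\text{for every }\phi\in C_{0}^{\infty}(\mathbb{R}).
\]
Since both endpoints $\pm\infty$ are of limit point type, $L_{q}$ is essentially self--adjoint on $C_{0}^{\infty}(\mathbb{R})$ (as recorded after Lemma\ref{l7}), so this set is an operator core, and the displayed inequality is exactly the statement that the symmetric operator $L_{q}$ on $C_{0}^{\infty}(\mathbb{R})$ is bounded below by $\lambda$. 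Its unique self--adjoint extension then satisfies $L_{q}\geq\lambda$, i.e. $\lambda\leq\inf\mathrm{sp}\,L_{q}$, which is the assertion.

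The key device is the ground state substitution. Because $f>0$ everywhere, any $\phi\in C_{0}^{\infty}(\mathbb{R})$ can be written as $\phi=f\psi$ with $\psi=\phi/f$ again compactly supported. Using $\phi'=f'\psi+f\psi'$ together with the relation $q-\lambda=f''/f$ furnished by (\ref{35}), I would verify the pointwise ground state representation
\[
\left\vert\phi'\right\vert^{2}+(q-\lambda)\left\vert\phi\right\vert^{2}=\frac{d}{dx}\left(ff'\left\vert\psi\right\vert^{2}\right)+f^{2}\left\vert\psi'\right\vert^{2},
\]
the only algebra being $(f')^{2}+ff''=(ff')'$ and $2\operatorname{Re}(\overline{\psi}\psi')=(\left\vert\psi\right\vert^{2})'$. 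Integrating over $\mathbb{R}$ kills the total--derivative term, since $ff'\left\vert\psi\right\vert^{2}$ has compact support, and leaves
\[
\int_{\mathbb{R}}\left(\left\vert\phi'\right\vert^{2}+q\left\vert\phi\right\vert^{2}-\lambda\left\vert\phi\right\vert^{2}\right)dx=\int_{\mathbb{R}}f^{2}\left\vert\psi'\right\vert^{2}dx\geq0,
\]
which is precisely the required form inequality.

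The step I expect to demand the most care is the regularity bookkeeping for the representation identity when $q$ is merely in $L_{loc}^{1}(\mathbb{R})$. Here $f$ is $C^{1}$ with $f'$ absolutely continuous, so that $ff'$ is absolutely continuous and the total derivative is an honest $L^{1}$ function; the identity then holds in the distributional sense, and I would justify the integration by parts on this footing rather than pointwise. A secondary point, already settled by the limit point hypothesis through Lemma\ref{l7}, is that $C_{0}^{\infty}(\mathbb{R})$ genuinely is a core, so that the form inequality on $C_{0}^{\infty}(\mathbb{R})$ transfers to the self--adjoint operator $L_{q}$ and hence to its spectrum.
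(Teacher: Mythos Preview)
Your argument is correct and complete: the ground state substitution $\phi=f\psi$ together with the Jacobi identity
\[
\left\vert\phi'\right\vert^{2}+(q-\lambda)\left\vert\phi\right\vert^{2}=\bigl(ff'\left\vert\psi\right\vert^{2}\bigr)'+f^{2}\left\vert\psi'\right\vert^{2}
\]
yields the form inequality on $C_{0}^{\infty}(\mathbb{R})$, and essential self-adjointness (from the limit point hypothesis) pushes this to $\inf\mathrm{sp}\,L_{q}\geq\lambda$. Your regularity remark is well placed: for $q\in L_{loc}^{1}$ the identity should be read with $f'$ absolutely continuous, which suffices to integrate the total derivative.

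The paper takes a different route. Instead of a pointwise identity on test functions, it compares $\lambda$ with the lowest Dirichlet eigenvalue $\lambda_{0}$ of $L_{q}$ on a finite interval $(-a,a)$: a single integration by parts of $\int_{-a}^{a}(L_{q}f)u\,dx$ against the positive Dirichlet ground state $u$ produces the boundary term $f(a)u'(a)-f(-a)u'(-a)$, which is strictly negative because $f>0$ and $u'(\pm a)$ have the expected signs, forcing $\lambda<\lambda_{0}$; then $a\rightarrow\infty$ gives $\lambda\leq\inf\mathrm{sp}\,L_{q}$. The paper's argument is marginally more elementary in that it avoids the ground state transform and any explicit mention of quadratic forms, at the price of invoking the convergence of Dirichlet eigenvalues to the bottom of the spectrum. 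Your argument is more direct (no limiting procedure) and actually proves the stronger statement $L_{q}\geq\lambda$ in the form sense; it is essentially the Allegretto--Piepenbrink mechanism specialised to one dimension.
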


\begin{proof}
Although this is widely known in a more general framework, for completeness
sake we give a proof. For a fixed $a>0$ let $\lambda_{0}$ be the minimum
eigenvalue for the operator $L$ restricted to an interval $\left(
-a,a\right) $ with Dirichlet boundary condition at the boundaries $\pm a$,
and $u$ be the eigenfunction. One can assume $u$ takes positive value in $%
\left( -a,a\right) $. An integration by parts shows%
\begin{align*}
\lambda\int_{-a}^{a}f\left( x\right) u\left( x\right) dx & =\int
_{-a}^{a}L_{q}f\left( x\right) u\left( x\right) dx \\
& =f(a)u^{\prime}(a)-f(-a)u^{\prime}(-a)+\lambda_{0}\int_{-a}^{a}f\left(
x\right) u(x)dx\text{.}
\end{align*}
Since $u^{\prime}(-a)>0$, $u^{\prime}(a)<0$, we have $\lambda<\lambda_{0} $,
which leads us to $\lambda\leq\inf$ \textrm{sp}$L_{q}$ by letting $%
a\rightarrow\infty$.\medskip
\end{proof}

Denote by $f_{\pm}\left( x,\lambda\right) $ the solutions to (\ref{35})
belonging to $L^{2}\left( \mathbb{R}_{\pm}\right) $ respectively when the
boundary $\pm\infty$ are of limit point type, and define%
\begin{equation*}
m_{\pm}\left( \lambda\right) =\pm\dfrac{f_{\pm}^{\prime}\left(
0,\lambda\right) }{f_{\pm}\left( 0,\lambda\right) }\text{.}
\end{equation*}
These two functions $m_{\pm}$ are known to be of \emph{Herglotz} (a
holomorphic function $m$ on $\mathbb{C}\backslash\mathbb{R}$ satisfying $%
m\left( z\right) =\overline{m\left( \overline{z}\right) }$ and $\func{Im}%
m(z)>0$ on $\mathbb{C}_{+}$), and called \emph{Weyl functions} (or
Weyl-Titchmarsh function). $m_{\pm}$ are holomorphic on $\mathbb{C}%
\backslash $\textrm{sp}$L_{\pm}$ respectively. The following proposition
identifies $m_{W}$ with the Weyl functions.

\begin{proposition}
\label{p7}Suppose $W\in Gr_{+}^{\left( 2\right) }$. Then, the associated $%
q_{W}$ has no singularities on $\mathbb{R}$ and $q_{W}$ is real valued
there. Assume $\left\{ m_{e_{x}W}(z)\right\} _{x\in\mathbb{R}}$ forms a
normal family on $\mathbb{C}\backslash\overline{\mathbb{D}}_{r}$. Then, $%
\pm\infty$ are of limit point type for the associated $L_{q_{W}}$, and $%
-r^{2}\leq\inf $\textrm{sp}$L_{q_{W}}$. The $m$-function $m_{W}$ is related
to the Weyl functions $m_{\pm}$ of $q_{W}$ by%
\begin{equation}
m_{W}\left( z\right) =\left\{ 
\begin{array}{l}
-m_{+}\left( -z^{2}\right) \text{, \ \ \ for }\func{Re}z>0 \\ 
m_{-}\left( -z^{2}\right) \text{, \ \ \ \ \ for }\func{Re}z<0%
\end{array}
\right. \text{.}  \label{56}
\end{equation}
Consequently, $m_{W}$ known to be holomorphic on $\mathbb{C}\backslash\left( %
\left[ -r,r\right] \cup i\left[ -r,r\right] \right) $ and have a property%
\begin{equation*}
\dfrac{\func{Im}m_{W}\left( z\right) }{\func{Im}z}>0\text{ \ \ on \ }\mathbb{%
C}\backslash\left( \mathbb{R}\cup i\mathbb{R}\right) \text{.}
\end{equation*}
\end{proposition}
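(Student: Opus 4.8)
The plan is to read the three assertions off the structure of $Gr_{+}^{\left( 2\right) }$ already built up, together with the standard Weyl theory of Lemmas \ref{l7} and \ref{l8}. First I would settle the regularity of $q_{W}$. Since $e_{x}\in\mathit{\Gamma}_{\func{real}}$ for real $x$ and $W\in Gr_{+}^{\left( 2\right) }$, Theorem \ref{t2} gives $e_{x}W\in Gr^{\left( 2\right) }$, i.e.\ $\tau_{W}(e_{x})\neq0$, while the definition of $Gr_{+}^{\left( 2\right) }$ gives $\tau_{W}(e_{x})\geq0$; hence $\tau_{W}(e_{x})>0$ for every $x\in\mathbb{R}$. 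As $\tau_{W}(e_{x})$ is entire and positive on $\mathbb{R}$, $\log\tau_{W}(e_{x})$ is real-analytic and real there, so $q_{W}(x)=-2\partial_{x}^{2}\log\tau_{W}(e_{x})$ is smooth, real and free of singularities on $\mathbb{R}$. Moreover $\tau_{W}(e_{x})>0$ forces $e_{x}W\in Gr_{+}^{\left( 2\right) }$ (as in the proof of Lemma \ref{l6}(iii)), so Corollary \ref{c4}(i) applied to $e_{x}W$ yields $1+\varphi_{e_{x}W}(s)>0$ for every real $s>r$. Consequently $f_{W}(x,s)=e^{-xs}\left( 1+\varphi_{e_{x}W}(s)\right) $ is a strictly positive solution of $L_{q_{W}}f=-s^{2}f$ on all of $\mathbb{R}$ for each real $s>r$; this is the object I will feed into Lemma \ref{l8}.

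The heart of the proof, and the only place the normal-family hypothesis enters, is limit point type at $\pm\infty$. Because the Baker--Akhiezer function of $e_{x}W$ at $0$ differs from $f_{W}(x,\cdot)$ only by the $x$-independent factor $e^{xz}$, one has $m_{e_{x}W}(z)=-f_{W}^{\prime}(x,z)/f_{W}(x,z)$, whence $f_{W}(x,z)=f_{W}(0,z)\exp\left( -\int_{0}^{x}m_{e_{y}W}(z)\,dy\right) $. By normality $\{m_{e_{y}W}\}_{y\in\mathbb{R}}$ is locally bounded on $\{\left\vert z\right\vert >r\}$; combined with $m_{e_{y}W}(z)=z+O(z^{-1})$ from \eqref{26}, Cauchy estimates on the circle $\left\vert z\right\vert =2r$ bound the Laurent coefficients of $m_{e_{y}W}(z)-z$ uniformly in $y$, so $m_{e_{y}W}(z)-z\to0$ as $\left\vert z\right\vert \to\infty$ uniformly in $y$. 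Fixing one $z_{0}$ with $\func{Re}z_{0}>0$ and $\left\vert z_{0}\right\vert $ large then gives $\func{Re}\int_{0}^{x}m_{e_{y}W}(z_{0})\,dy\geq\tfrac{1}{2}x\func{Re}z_{0}\to+\infty$, so $f_{W}(\cdot,z_{0})\in L^{2}(\mathbb{R}_{+})$, while the independent solution $f_{W}(\cdot,-z_{0})$ (independent since its Wronskian with $f_{W}(\cdot,z_{0})$ is $2z_{0}\neq0$) grows and is not square integrable. Hence $+\infty$ is of limit point type, and $\func{Re}z_{0}<0$ handles $-\infty$. This uniform-in-$y$ control is the step I expect to be genuinely delicate; everything afterward is bookkeeping. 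With limit point type in hand, Lemma \ref{l8} applied to the positive solution $f_{W}(\cdot,s)$ gives $-s^{2}\leq\inf\mathrm{sp}\,L_{q_{W}}$ for all real $s>r$, and $s\downarrow r$ yields $-r^{2}\leq\inf\mathrm{sp}\,L_{q_{W}}$.

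Granting limit point type, the Weyl solutions $f_{\pm}$ and the Herglotz functions $m_{\pm}$ of Lemma \ref{l7} are defined. For $\func{Re}z>0$ and $\left\vert z\right\vert $ large the computation above shows $f_{W}(\cdot,z)\in L^{2}(\mathbb{R}_{+})$, so $f_{W}(\cdot,z)$ is a scalar multiple of $f_{+}(\cdot,-z^{2})$; comparing logarithmic derivatives at $x=0$ gives $m_{W}(z)=-f_{W}^{\prime}(0,z)/f_{W}(0,z)=-m_{+}(-z^{2})$. Both sides are holomorphic on $\{\func{Re}z>0,\ \left\vert z\right\vert >r\}$ (the left by Corollary \ref{c4}(ii), the right off $\{-z^{2}\in\mathrm{sp}\,L_{+}\}$), and this set is connected, so by the identity theorem $m_{W}(z)=-m_{+}(-z^{2})$ throughout $\func{Re}z>0$; the mirror argument on $\mathbb{R}_{-}$ gives $m_{W}(z)=m_{-}(-z^{2})$ for $\func{Re}z<0$, the change of sign being exactly the sign in $m_{\pm}=\pm f_{\pm}^{\prime}/f_{\pm}$. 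This is \eqref{56}.

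Finally, restricting $f_{W}(\cdot,s)$ to each half-line gives a positive half-line solution, so the half-line analogue of Lemma \ref{l8} yields $\mathrm{sp}\,L_{\pm}\subset\lbrack-r^{2},\infty)$; hence $m_{\pm}(\lambda)$ is holomorphic for $\lambda\notin\lbrack-r^{2},\infty)$, i.e.\ for $-z^{2}\notin\lbrack-r^{2},\infty)$, i.e.\ for $z\notin i\mathbb{R}\cup\lbrack-r,r]$. Taking the union with the holomorphy of $m_{W}$ on $\{\left\vert z\right\vert >r\}$ from Corollary \ref{c4} covers exactly $\mathbb{C}\backslash\left( \lbrack-r,r]\cup i[-r,r]\right) $. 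The inequality $\func{Im}m_{W}(z)/\func{Im}z>0$ off the two axes then follows quadrant by quadrant from the Herglotz property of $m_{\pm}$ and the fact that $z\mapsto-z^{2}$ carries each open quadrant onto an open half-plane: for instance in the first quadrant $-z^{2}\in\mathbb{C}_{-}$, so $\func{Im}m_{+}(-z^{2})<0$ and $\func{Im}m_{W}(z)=-\func{Im}m_{+}(-z^{2})>0$ while $\func{Im}z>0$, and the remaining quadrants are symmetric.
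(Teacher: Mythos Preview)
Your proof is correct and follows the same architecture as the paper's: establish $\tau_W(e_x)>0$ via Theorem~\ref{t2}, rewrite $f_W(x,\zeta)=f_W(0,\zeta)\exp\bigl(-\int_0^x m_{e_yW}(\zeta)\,dy\bigr)$, exploit normality to control the integrand and force limit point type, then identify $f_W(\cdot,\pm z)$ with $f_\pm$ and read off \eqref{56}. The one genuine difference is how the normal-family hypothesis is cashed in. The paper passes to $\phi_x(z)=z\,m_{e_xW}(z^{-1})$, which is holomorphic on $\mathbb{D}_{r^{-1}}$ with $\phi_x(0)=1$, and uses a Hurwitz-type argument: if the real zeros of $\phi_x$ accumulated at $0$, a subsequential limit would violate $\phi(0)=1$. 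This produces a uniform $r_0>r$ with $m_{e_xW}$ zero-free on $\{|z|>r_0\}$, and then for real $a>r_0$ the sign of $m_{e_yW}(\pm a)$ makes $f_W(\cdot,a)$ decreasing and $f_W(\cdot,-a)$ increasing on $\mathbb{R}_+$, so the latter is not in $L^2(\mathbb{R}_+)$. You instead use normality for uniform boundedness on $|z|=2r$, feed that into Cauchy estimates on the Laurent tail $m_{e_yW}(z)-z$, and conclude $m_{e_yW}(z)-z\to0$ uniformly in $y$; picking $|z_0|$ large with $\operatorname{Re}z_0>0$ then gives exponential decay of $f_W(\cdot,z_0)$ and growth of $f_W(\cdot,-z_0)$ directly. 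Your route is a touch more quantitative and avoids the zero-tracking step; the paper's has the mild advantage of working entirely with real $a$, so the subsequent identification $f_W(\cdot,a)=f_+(\cdot,-a^2)$ happens at points where $-a^2$ is automatically below the spectrum and no care about the axes is needed in the analytic continuation. Two small remarks: your Wronskian is not literally $2z_0$ but $(1+\varphi_W(z_0))(1+\varphi_W(-z_0))(m_W(z_0)-m_W(-z_0))\sim 2z_0$, which is still nonzero for $|z_0|$ large (or just note one solution is in $L^2(\mathbb{R}_+)$ and the other is not); and in your continuation step it is cleanest to first work on an open quadrant, where $-z^2\notin\mathbb{R}$ guarantees holomorphy of $m_+(-z^2)$, before invoking the half-line spectral bound you derive afterward.
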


\begin{proof}
$q_{W}$ has no singularity on $\mathbb{R}$ due to Theorem\ref{t2}, since $%
e_{x}\in\mathit{\Gamma}_{\func{real}}$ if $x\in\mathbb{R}$. The first
assertion is The Baker-Akhiezer function $f_{W}(x,z)$ satisfies Schr\"{o}%
dinger equation with potential $q_{W}$ and $\lambda=-z^{2}$. On the other
hand, (\ref{22}) implies%
\begin{equation*}
f_{W}(x+y,\zeta)=e^{-xz-y\zeta}\tau_{e_{x+y}W}\left( q_{\zeta}\right)
=e^{-x\zeta-y\zeta}\tau_{e_{y}e_{x}W}\left( q_{\zeta}\right) =e^{-x\zeta
}f_{e_{x}W}(y,\zeta)\text{.}
\end{equation*}
This together with (\ref{25}) yields%
\begin{equation*}
m_{e_{x}W}\left( \zeta\right) =-\dfrac{f_{e_{x}W}^{\prime}(0,\zeta )}{%
f_{e_{x}W}(0,\zeta)}=-\dfrac{f_{W}^{\prime}(x,\zeta)}{f_{W}(x,\zeta )}\text{,%
}
\end{equation*}
from which an identity%
\begin{equation}
f_{W}(x,\zeta)=f_{W}(0,\zeta)\exp\left( -\int_{0}^{x}m_{e_{y}W}\left(
\zeta\right) dy\right)  \label{36}
\end{equation}
follows. Since $W\in Gr_{+}^{\left( 2\right) }$, Theorem\ref{t2} implies $%
e_{y}W\in Gr_{+}^{\left( 2\right) }$. Therefore, Corollary\ref{c4} shows $%
m_{e_{y}W}\left( z\right) $ is holomorphic on $\mathbb{C}\backslash 
\overline{\mathbb{D}}_{r}$.\newline
Set $\phi_{x}(z)=zm_{e_{x}W}(z^{-1})$. Then, $\phi_{x}$ is holomorphic on $%
\mathbb{D}_{r^{-1}}$ satisfying $\phi _{x}(0)=1$, and $\left\{
\phi_{x}(z)\right\} _{x\in\mathbb{R}}$ forms a normal family on $\mathbb{D}%
_{r^{-1}}$. Denote by $z(x)\in\left( -r^{-1},r^{-1}\right) $ a zero of $%
\phi_{x}$ if it exists. If there exists a sequence $\left\{ x_{n}\right\}
_{n\geq1}\subset\mathbb{R}$ such that $z(x_{n})\rightarrow0\in\left(
-r^{-1},r^{-1}\right) $, then, one can assume $\phi_{x_{n}}\rightarrow\phi$,
and $\phi\left( 0\right) =0$, which contradicts $\phi\left( 0\right) =1$.
Therefore, $\left\vert z(x)\right\vert \geq r_{0}^{-1}$ holds uniformly for
some $r_{0}>r$, which means that $m_{e_{x}W}(z)$ has no zero on $\left\vert
z\right\vert >r_{0}$ for any $x\in\mathbb{R}$. For $\zeta=a>r_{0}$ in (\ref%
{36}) we see%
\begin{equation*}
\left\{ 
\begin{array}{l}
f_{W}\left( x,a\right) =f_{W}\left( 0,a\right) \exp\left(
-\int_{0}^{x}m_{e_{y}W}(a)dy\right) \text{ \ \ decreasing} \\ 
f_{W}\left( x,-a\right) =f_{W}\left( 0,-a\right) \exp\left(
-\int_{0}^{x}m_{e_{y}W}(-a)dy\right) \text{ \ increasing}%
\end{array}
\right. \text{,}
\end{equation*}
and Lemma\ref{l7} implies the boundaries $\pm\infty$ are of limit point
type, which shows%
\begin{equation*}
f_{W}\left( x,a\right) =f_{+}\left( x,-a^{2}\right) \text{, \ \ }f_{W}\left(
x,-a\right) =f_{-}\left( x,-a^{2}\right) \text{.}
\end{equation*}
These identities are valid for any $a>r_{0}$, therefore $f_{W}\left(
x,z\right) =f_{+}\left( x,-z^{2}\right) $, \ \ $f_{W}\left( x,-z\right)
=f_{-}\left( x,-z^{2}\right) $ hold for any $z\in\mathbb{C}\backslash 
\mathbb{D}_{r}$, which implies $m_{+}\left( -z^{2}\right) =-m_{W}(z)$, \ $%
m_{-}\left( -z^{2}\right) =m_{W}(-z)$. Lemma\ref{l8} implies $-r^{2}\leq\inf$
\textrm{sp}$L_{+}$.\medskip
\end{proof}

In the above proof the normality of $\left\{ m_{e_{x}W}(z)\right\} _{x\in%
\mathbb{R}}$ was crucial. It should be remarked that the converse statement
holds. Namely, for $W\in Gr_{+}^{(2)}$ assume $m_{W}$ is connected with the
Weyl functions $m_{\pm}$ as in (\ref{56}). Then, Lemma\ref{l11} implies that
there exists a measure $\sigma_{y}$ on $\left[ -\sqrt{2}r,\sqrt{2}r\right] $
such that%
\begin{equation}
m_{e_{y}W}(z)=\sqrt{z^{2}+r^{2}}+\int_{-\sqrt{2}r}^{\sqrt{2}r}\dfrac {%
\sigma_{y}\left( d\xi\right) }{\xi-\sqrt{z^{2}+r^{2}}}\text{,}  \label{58}
\end{equation}
hence%
\begin{equation*}
m_{e_{y}W}(x)-m_{e_{y}W}(-x)=2\sqrt{x^{2}+r^{2}}\left( 1+\int_{-\sqrt{2}r}^{%
\sqrt{2}r}\dfrac{1}{\xi^{2}-\left( x^{2}+r^{2}\right) }\sigma_{y}\left(
d\xi\right) \right) \text{.}
\end{equation*}
On the other hand, Corollary\ref{c4} shows for $x>r$%
\begin{equation*}
m_{e_{y}W}(x)-m_{e_{y}W}(-x)>0\text{,}
\end{equation*}
which means%
\begin{equation*}
\int_{-\sqrt{2}r}^{\sqrt{2}r}\dfrac{1}{\left( x^{2}+r^{2}\right) -\xi^{2}}%
\sigma_{y}\left( d\xi\right) <1\text{ \ for any }x>r\text{.}
\end{equation*}
Consequently, letting $x\rightarrow r$, we have%
\begin{equation*}
\int_{-\sqrt{2}r}^{\sqrt{2}r}\dfrac{1}{2r^{2}-\xi^{2}}\sigma_{y}\left(
d\xi\right) \leq1\text{ \ for any }y\in\mathbb{R}\text{,}
\end{equation*}
which implies the normality of $\left\{ m_{e_{x}W}(z)\right\} _{x\in \mathbb{%
R}}$.

Proposition\ref{p7} asserts that the $m$-function is directly related to the
Weyl functions. Set%
\begin{equation*}
\mathcal{H}=\left\{ 
\begin{array}{c}
m\text{; \ }m\text{ is holomorphic on }\mathbb{C}\backslash\left( \mathbb{R}%
\cup i\mathbb{R}\right) \text{ satisfying} \\ 
m\left( z\right) =\overline{m\left( \overline{z}\right) }\text{ and }\func{Im%
}m(z)>0\text{ on }\mathbb{C}_{+}\backslash i\mathbb{R}%
\end{array}
\right\} \text{.}
\end{equation*}
We introduce a subclass $\mathcal{M}_{r}$ of $\mathcal{H}$ in view of the
property of Proposition\ref{p7}. Denote by $\mathcal{M}_{r}$ the set of all
functions $m$ satisfying the following conditions: Let $I_{r}=\left[ -r,%
\text{ }r\right] $ for $r>0$.%
\begin{equation}
\left\{ 
\begin{tabular}{l}
$\text{(i) \ \ }m\in\mathcal{H}$. \\ 
$\text{(ii)}\ \ m\text{ is holomorphic on }\mathbb{C}\backslash\left(
I_{r}\cup iI_{r}\right) $,$\text{ continuous on }\partial\mathbb{D}_{r}$ \\ 
$\ \ \ \ \text{and satisfies }m(r)>m(-r)$. \\ 
(iii) $m\text{ has a pole at }\infty$ $\text{of a form \ }m(z)=z+O\left(
z^{-1}\right) \text{.}$%
\end{tabular}
\right.  \label{37}
\end{equation}

The next goal is to show the converse statement. For that purpose recall
transformation%
\begin{equation*}
\left( d_{\zeta}m\right) \left( z\right) =\dfrac{z^{2}-\zeta^{2}}{m\left(
z\right) -m\left( \zeta\right) }-m\left( \zeta\right) \text{,}
\end{equation*}
and define%
\begin{equation}
\left( D_{\zeta}m\right) \left( z\right) =\dfrac{z-\zeta}{m\left( z\right)
-m\left( \zeta\right) }-m\left( \zeta\right) \text{.}  \label{38}
\end{equation}
Then, without difficulty one can show $D_{\zeta_{1}}D_{\zeta_{2}}=D_{\zeta
_{2}}D_{\zeta_{1}}$, $d_{\zeta_{1}}d_{\zeta_{2}}=d_{\zeta_{2}}d_{\zeta_{1}}$.

\begin{lemma}
\label{l9}It holds that $d_{\overline{\zeta}}d_{\zeta}m\in\mathcal{H}$ for $%
\zeta\in\mathbb{C}\backslash\left( \mathbb{R}\cup i\mathbb{R}\right) $ and $%
m\in\mathcal{H}$.
\end{lemma}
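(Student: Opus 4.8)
The plan is to check the two defining conditions of $\mathcal{H}$ for $M:=d_{\overline{\zeta}}d_{\zeta}m$: the reality relation $M(z)=\overline{M(\overline{z})}$ together with holomorphy off $\mathbb{R}\cup i\mathbb{R}$, and the strict positivity $\func{Im}M(z)>0$ on $\mathbb{C}_{+}\backslash i\mathbb{R}$. For the reality relation I would first record the conjugation identity $\overline{d_{\zeta}m}=d_{\overline{\zeta}}\,\overline{m}$, which follows by substituting $\overline{f}(z)=\overline{f(\overline{z})}$ into the definition of $d_{\zeta}$ and using $\overline{m(\zeta)}=\overline{m}(\overline{\zeta})$. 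Since $m\in\mathcal{H}$ means $\overline{m}=m$, applying this twice and invoking the commutativity $d_{\zeta_{1}}d_{\zeta_{2}}=d_{\zeta_{2}}d_{\zeta_{1}}$ stated before the lemma gives $\overline{M}=d_{\zeta}d_{\overline{\zeta}}m=M$, i.e. the reality relation.

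The core is the imaginary part. Writing $\lambda=z^{2}$, $w=m(z)$, $\eta=\zeta^{2}$, $v=m(\zeta)$, I would evaluate $M$ in three steps: $n:=d_{\zeta}m$ satisfies $n(\overline{\zeta})=s-v$ with $s:=\func{Im}\eta/\func{Im}v$ (by reality), and then $M=(\lambda-\overline{\eta})/(P-s)-(s-v)$ where $P=(\lambda-\eta)/(w-v)$. The decisive simplification is that the denominator collapses to $D:=z^{2}-s\,m(z)+c_{0}$ with $s$ and $c_{0}=(\func{Im}\eta)\func{Re}v/\func{Im}v-\func{Re}\eta$ both \emph{real}. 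A routine reduction (writing subscripts $1,2$ for real and imaginary parts, $a=\lambda_{1}-\eta_{1}$, $b=w_{1}-v_{1}$) then yields
\[\func{Im}M=\frac{\Phi}{|D|^{2}},\qquad \Phi=w_{2}\bigl(a^{2}+\lambda_{2}^{2}+\eta_{2}^{2}\bigr)-\frac{\eta_{2}\lambda_{2}}{v_{2}}\bigl(b^{2}+v_{2}^{2}+w_{2}^{2}\bigr),\]
so everything reduces to proving $\Phi>0$ on the upper quadrants.

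Since $w_{2}=\func{Im}m(z)>0$ there, the sign of $\Phi$ is governed by $\eta_{2}\lambda_{2}=\func{Im}(\zeta^{2})\func{Im}(z^{2})$. When $\eta_{2}\lambda_{2}\le 0$ both summands are $\ge 0$ and positivity is immediate. The substantive case is $\eta_{2}\lambda_{2}>0$; because $\func{Im}z,\func{Im}\zeta>0$ this occurs \emph{exactly} when $z$ and $\zeta$ lie in the same (right or left) half-plane. On each half-plane one has $m(z)=G(z^{2})$ for a single Herglotz function $G$ (namely $-m_{+}(-\,\cdot\,)$ on $\func{Re}z>0$, and $-m_{-}(-\,\cdot\,)$ on $\func{Re}z<0$, both Herglotz), and $\lambda,\eta$ then lie in a common open half-plane. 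I would show $\Phi\ge 0$ is algebraically equivalent to the two-point Nevanlinna--Pick positivity of $G$, which after clearing denominators reads $w_{2}v_{2}\bigl(a^{2}+(\lambda_{2}+\eta_{2})^{2}\bigr)\ge\lambda_{2}\eta_{2}\bigl(b^{2}+(w_{2}+v_{2})^{2}\bigr)$: expanding the two squares, this differs from $v_{2}\Phi\ge 0$ only by the term $2\lambda_{2}\eta_{2}w_{2}v_{2}$ appearing on both sides, so the two inequalities coincide. This same-half-plane case, and in particular recognizing the common Herglotz function $G$ and matching $\Phi$ to the Pick form, is the main obstacle.

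Finally, holomorphy. A single $d_{\zeta}$ can produce poles where $m(z)=m(\zeta)$, so I must rule these out for $M$. The identity $\func{Im}M=\Phi/|D|^{2}$ already forbids poles in the interior of the quadrants (a pole would force $\func{Im}M$ to change sign), and the only candidate singularities are the zeros of $D$. There, imposing $\func{Im}D=\lambda_{2}-sw_{2}=0$ in $\Phi$ gives $\Phi=w_{2}(a^{2}-s^{2}b^{2})=w_{2}(\func{Re}D)(a+sb)$, which vanishes because $\func{Re}D=a-sb=0$ as well; hence $\Phi$ vanishes on $\{D=0\}$ and the singularities are removable, so $M$ is holomorphic on $\mathbb{C}\backslash(\mathbb{R}\cup i\mathbb{R})$. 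Strict positivity (so that $M\in\mathcal{H}$, not merely $\func{Im}M\ge 0$) follows from the minimum principle: $\func{Im}M$ is harmonic and nonnegative on each quadrant, hence either strictly positive or identically zero, and the latter would force $M$, and thus $m$, to be constant, contrary to $m\in\mathcal{H}$. The remaining technical points are this removability bookkeeping at the zeros of $D$ and the sign tracking between the right and left half-planes.
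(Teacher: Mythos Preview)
Your argument is correct and arrives at the same destination as the paper, but by a genuinely different route. The paper never computes $\func{Im}M$ explicitly: it introduces the auxiliary transform $D_{\zeta}$ (with $z-\zeta$ in place of $z^{2}-\zeta^{2}$), observes the identity $(d_{\zeta}m)(z)=(D_{\zeta^{2}}m_{+})(z^{2})$ for $m_{+}(\lambda):=m(\sqrt{\lambda})$ on $\func{Re}z>0$ (and the analogue with $m_{-}(\lambda):=-m(-\sqrt{\lambda})$ on $\func{Re}z<0$), and then invokes Lemma~\ref{l10}, which is proved from the integral representation of Herglotz functions. Your explicit formula $\func{Im}M=\Phi/|D|^{2}$ together with the two--point Nevanlinna--Pick inequality replaces that black box; both arguments ultimately rest on the same structural fact---on each half-plane $m(z)=G(z^{2})$ with $G$ Herglotz---but the paper packages it abstractly while you unpack it algebraically. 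Two small remarks: your parenthetical naming of $G$ as ``$-m_{+}(-\cdot)$'' borrows the Weyl-function notation of~(\ref{41}), which is not part of the hypothesis $m\in\mathcal{H}$; it is cleaner simply to set $G(\lambda)=m(\sqrt{\lambda})$ (principal branch) on the right and $G(\lambda)=-m(-\sqrt{\lambda})$ on the left, which are Herglotz directly from $m\in\mathcal{H}$. Second, in your holomorphy paragraph the ``$\Phi$ vanishes on $\{D=0\}$'' computation is unnecessary and does not by itself establish removability ($\Phi$ vanishes only to first order there, while $|D|^{2}$ vanishes to second); the correct argument is the one you already gave a sentence earlier---a genuine pole of $M$ would force $\func{Im}M$ to change sign in any punctured neighbourhood, contradicting $\Phi\ge 0$.
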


\begin{proof}
For $m\in\mathcal{H}$ define%
\begin{equation}
m_{+}(z)=m\left( \sqrt{z}\right) \text{,}  \label{39}
\end{equation}
where $\sqrt{z}$ is defined on $\mathbb{C}\backslash\mathbb{R}_{-}$ so that $%
\sqrt{1}=1$. Then, $m_{+}$ turns out to be an irrational Herglotz function,
and Lemma\ref{l10} in the Appendix implies $D_{\overline{\zeta}%
}D_{\zeta}m_{+}$ is of Herglotz, hence for $\zeta$, $z$ satisfying $%
\zeta^{2}\in\mathbb{C}\backslash\mathbb{R}$ and $\func{Re}z>0$, $\func{Im}%
z>0 $ we see 
\begin{equation*}
\func{Im}\left( d_{\overline{\zeta}}d_{\zeta}m\right) \left( z\right) =\func{%
Im}\left( D_{\overline{\zeta}^{2}}D_{\zeta^{2}}m_{+}\right) \left(
z^{2}\right) >0\text{,}
\end{equation*}
because $z^{2}\in\mathbb{C}_{+}$. To obtain the result for $z$ satisfying $%
\func{Re}z<0$, $\func{Im}z>0$ we define%
\begin{equation*}
m_{-}(z)=-m\left( -\sqrt{z}\right) \text{.}
\end{equation*}
Then, $m_{-}$ is again of Herglotz, hence%
\begin{equation*}
\func{Im}\left( d_{\overline{\zeta}}d_{\zeta}m\right) \left( z\right) =-%
\func{Im}\left( D_{\overline{\zeta}^{2}}D_{\zeta^{2}}m_{-}\right) \left(
z^{2}\right) >0
\end{equation*}
due to $z^{2}\in\mathbb{C}_{-}$ and $m(z)=-m_{-}\left( z^{2}\right) $, which
completes the proof.\medskip
\end{proof}

\begin{proposition}
\label{p11}Let $m\in\mathcal{M}_{r}$ and $s>r$. Set%
\begin{equation*}
W_{m}=\left\{ \varphi\left( z^{2}\right) +\psi\left( z^{2}\right) m(z)\text{%
; \ }\varphi,\text{ }\psi\in H_{+}\left( \partial\mathbb{D}_{s}\right)
\right\} \text{.}
\end{equation*}
Then, $W_{m}\in Gr_{+}^{\left( 2\right) }\left( \mathbb{D}_{s}\right) $ and $%
m_{W}=m$, $\tau_{W}\left( g\right) =\tau_{m}(g)$ hold. Moreover, $m_{gW}\in%
\mathcal{M}_{s}$ is valid for any $g\in\mathit{\Gamma}_{\func{real}}\left( 
\mathbb{D}_{s}\right) $.
\end{proposition}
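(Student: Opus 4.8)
The plan is to pass to the matrix picture of Section 2 and reduce everything to a scalar Toeplitz operator. Writing $m(z)=m_{e}(z^{2})+zm_{o}(z^{2})$, the image $\phi(W_{m})$ is exactly $A\boldsymbol{H}_{+}$ with $A=\left(\begin{smallmatrix}1 & m_{e}\\ 0 & m_{o}\end{smallmatrix}\right)$ as in (\ref{12}), so $W_{m}$ is automatically $z^{2}$-invariant and, by the factorization following (\ref{12}), $W_{m}\in Gr^{(2)}(\mathbb{D}_{s})$ is equivalent to the invertibility of $T(m_{o})$ on $H_{+}(\partial\mathbb{D}_{s^{2}})$. I would verify the two hypotheses of Lemma \ref{l12} for $a=m_{o}$: the normalization $m_{o}-1\in H_{-}$ is immediate from the expansion $m(z)=z+m_{1}z^{-1}+\cdots$ of (iii), and the zero-freeness $m_{o}(w)\neq0$ on $\{|w|\geq s^{2}\}$ (where $m_{o}$ is holomorphic, as $s>r$) is the delicate point. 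Here I would use $m_{o}(z^{2})=(m(z)-m(-z))/2z$ together with the reality and positivity of $m\in\mathcal{M}_{r}$: for $z$ off both axes $m(z)$ and $m(-z)$ lie in opposite half-planes; for real $z=x>r$ the quantity $m(x)-m(-x)$ is the boundary value of the Herglotz function $m_{+}+m_{-}$ built as in the proof of Lemma \ref{l9}, hence strictly increasing and positive because $m(r)>m(-r)$; and for $z=iy$ one has $m_{o}(-y^{2})=\operatorname{Im}m(iy)/y$ with $\operatorname{Im}m(iy)>0$ by the minimum principle, since $\operatorname{Im}m$ is harmonic and nonnegative in a full neighborhood of the interior point $iy$ of $\mathbb{C}_{+}$ and cannot attain $0$ there without vanishing identically.

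The identification $m_{W_{m}}=m$ is then the crux and is surprisingly clean. Taking $\varphi=1,\psi=0$ shows $1\in W_{m}$, so the unique preimage of $1\in H_{+}$ is $1$ and $\varphi_{W_{m}}=A_{W_{m}}1=0$; taking $\varphi=0,\psi=1$ shows $m\in W_{m}$, and since $m(z)=z+(m_{1}z^{-1}+\cdots)$ has $\mathfrak{p}_{+}m=z$, the preimage of $z$ is $m$ and $\psi_{W_{m}}=A_{W_{m}}z=\mathfrak{p}_{-}m=m-z$. Substituting into (\ref{26}) gives $m_{W_{m}}(z)=(z+(m-z))/(1+0)+0=m(z)$. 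Because $\varphi_{W_{m}}=0$, the factor $\rho_{W_{m}}$ of (\ref{28}) is identically $1$, so (\ref{32}) yields $\tau_{W_{m}}(g)=\tau_{m_{W_{m}}}(g)=\tau_{m}(g)$. Reality $W_{m}=\overline{W_{m}}$ is immediate from $\overline{m}=m$ and $\overline{\varphi},\overline{\psi}\in H_{+}$, so $W_{m}\in Gr_{\mathrm{real}}^{(2)}$.

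For $W_{m}\in Gr_{+}^{(2)}$ I would invoke Lemma \ref{l6}(i): it suffices to prove $\tau_{W_{m}}(g)\geq0$ for every $g=\prod_{k=1}^{n}q_{\zeta_{k}}q_{\overline{\zeta}_{k}}$ with $\operatorname{Im}\zeta_{k}\neq0$. I would argue by induction on $n$ using the cocycle property (\ref{19}), writing $\tau_{W_{m}}(g)=\prod_{k}\tau_{W^{(k-1)}}(q_{\zeta_{k}}q_{\overline{\zeta}_{k}})$ with $W^{(k)}=q_{\zeta_{k}}q_{\overline{\zeta}_{k}}W^{(k-1)}$ and $W^{(0)}=W_{m}$. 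By Lemma \ref{l9} iterated (so that $m_{W^{(k)}}=d_{\overline{\zeta}_{k}}d_{\zeta_{k}}\cdots m\in\mathcal{H}$) the single-step value $\tau_{W^{(k-1)}}(q_{\zeta_{k}}q_{\overline{\zeta}_{k}})=|1+\varphi_{W^{(k-1)}}(\zeta_{k})|^{2}\operatorname{Im}m_{W^{(k-1)}}(\zeta_{k})/\operatorname{Im}\zeta_{k}\geq0$ (the formula used in the proof of Lemma \ref{l6}), the positivity on the imaginary axis again coming from the minimum-principle argument. The point to keep honest is non-degeneracy of the intermediate spaces: I would upgrade each $\geq0$ to $>0$ by Lemma \ref{l6}(ii), so that $W^{(k)}\in Gr^{(2)}$ at every step and the product is well-defined. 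This induction is the main obstacle, since it is exactly where preservation of the class $\mathcal{H}$ under $d_{\overline{\zeta}}d_{\zeta}$ and the propagation of strict positivity must be combined.

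Finally, once $W_{m}\in Gr_{+}^{(2)}$, Theorem \ref{t2} gives $gW_{m}\in Gr^{(2)}$ for every $g\in\mathit{\Gamma}_{\mathrm{real}}(\mathbb{D}_{s})$, and I would read off $m_{gW_{m}}\in\mathcal{M}_{s}$ from the structure theory already in place: Corollary \ref{c4} furnishes the Herglotz properties (condition (i) of (\ref{37})) and the boundary inequality $m_{gW_{m}}(s)>m_{gW_{m}}(-s)$ (part of (ii)); the pole $m_{gW_{m}}(z)=z+O(z^{-1})$ at infinity is automatic from (\ref{26}); and the holomorphy on $\mathbb{C}\setminus(I_{s}\cup iI_{s})$ together with the identification with the Weyl functions follows from Proposition \ref{p7}, after checking the normality of $\{m_{e_{x}gW_{m}}\}_{x}$ exactly as in the remark following that proposition. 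Collecting these four points proves the proposition.
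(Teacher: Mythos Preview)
Your approach is essentially identical to the paper's: reduce $W_m\in Gr^{(2)}$ to the invertibility of $T(m_o)$ via Lemma~\ref{l12}, read off $\varphi_{W_m}=0$, $\psi_{W_m}=m-z$ to get $m_{W_m}=m$ and $\rho_{W_m}\equiv1$, and then run the induction on $n$ in $g=\prod q_{\zeta_k}q_{\overline{\zeta}_k}$ using Lemma~\ref{l9} together with Lemma~\ref{l6}(ii) to upgrade $\geq0$ to $>0$ at each step. Your verification of $m_o\neq0$ is in fact more careful than the paper's, which simply asserts Lemma~\ref{l12} applies.

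The one place to tighten is the final paragraph, where two attributions are off. First, Corollary~\ref{c4} does \emph{not} furnish condition (i) of (\ref{37}): it only gives $\operatorname{Im}m_{gW_m}(z)/\operatorname{Im}z>0$ on $\{|z|>s\}$, not on all of $\mathbb{C}_+\setminus i\mathbb{R}$. The membership $m_{gW_m}\in\mathcal{H}$ comes instead from the same mechanism you already used in the induction---Lemma~\ref{l9} gives $m_{g_nW_m}\in\mathcal{H}$ for finite products $g_n$, and Proposition~\ref{p5} passes this to the limit (this is exactly what the paper means by ``starting from $g=g_n$ and noting $m_{g_nW_m}\in\mathcal{H}$''). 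Second, invoking Proposition~\ref{p7} for the holomorphy on $\mathbb{C}\setminus(I_s\cup iI_s)$ is circular as written: Proposition~\ref{p7} needs normality of $\{m_{e_xgW_m}\}_x$, and the remark you cite deduces normality \emph{from} (\ref{56}), which is the conclusion of Proposition~\ref{p7}. The clean route is to observe that $m_{gW_m}\in\mathcal{H}$ gives holomorphy on $\mathbb{C}\setminus(\mathbb{R}\cup i\mathbb{R})$, Corollary~\ref{c4} gives holomorphy on $\{|z|>s\}$, and the union of these two domains is exactly $\mathbb{C}\setminus(I_s\cup iI_s)$; no appeal to Proposition~\ref{p7} is needed for the statement of the proposition. (The paper itself is terse here, saying only that property (ii) ``follows from Corollary~\ref{c4}'', with the $\mathcal{H}$-membership supplying the rest.)
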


\begin{proof}
Since%
\begin{equation*}
\boldsymbol{W}_{m}=\left\{ \boldsymbol{f}\left( z\right) =\left( 
\begin{array}{cc}
1 & m_{e}\left( z\right) \\ 
0 & m_{o}\left( z\right)%
\end{array}
\right) \left( 
\begin{array}{c}
\varphi\left( z\right) \\ 
\psi\left( z\right)%
\end{array}
\right) \text{; \ }\varphi,\text{ }\psi\in H_{+}\left( \partial \mathbb{D}%
_{s}\right) \right\} \text{,}
\end{equation*}
Applying Lemma\ref{l12} to $a(z)=m_{o}\left( z\right) $ on $\partial \mathbb{%
D}_{s}$ shows $T\left( m_{o}\right) $ is invertible on $H_{+}$, hence $%
\boldsymbol{W}_{m}\in Gr^{\left( 2\right) }\left( \mathbb{D}_{s}\right) $,
and $W_{m}\in Gr^{\left( 2\right) }\left( \mathbb{D}_{s}\right) $. In this
case%
\begin{equation*}
\varphi_{W_{m}}\left( z\right) =0\text{, \ }\psi_{W_{m}}\left( z\right)
=m(z)-z
\end{equation*}
hold, hence $m_{W_{m}}=m$, $\tau_{W_{m}}\left( g\right) =\tau_{m}\left(
g\right) $ follow. Then, the rest of the proof is to show the property $%
\tau_{W_{m}}\left( g\right) \geq0$ for $g=\prod_{k=1}^{n}q_{\zeta_{k}}q_{%
\overline{\zeta}_{k}}$ with $\zeta_{k}\in\left\{ \left\vert z\right\vert
>s\right\} $ and $\func{Im}\zeta_{k}\neq0$. We show $\tau_{W_{m}}\left(
g\right) >0$ by induction. For $n=1$ (\ref{51}) implies%
\begin{equation*}
\tau_{W_{m}}\left( q_{\zeta}q_{\overline{\zeta}}\right) =\left\vert
1+\varphi_{W_{m}}\left( \zeta\right) \right\vert ^{2}\frac{m\left(
\zeta\right) -\overline{m\left( \zeta\right) }}{\zeta-\overline{\zeta}}=%
\frac{m\left( \zeta\right) -\overline{m\left( \zeta\right) }}{\zeta-%
\overline{\zeta}},
\end{equation*}
which is a positive quantity because of $m\in\mathcal{M}_{r}$. Suppose $%
\tau_{W}\left( g_{n-1}\right) >0$ for $g_{n-1}=\prod_{k=1}^{n-1}q_{%
\zeta_{k}}q_{\overline{\zeta}_{k}}$. Then, $g_{n-1}W_{m}\in Gr^{\left(
2\right) }\left( \mathbb{D}_{s}\right) $ and is real, hence%
\begin{equation*}
\tau_{g_{n-1}W_{m}}\left( q_{\zeta_{n}}q_{\overline{\zeta}_{n}}\right)
=\left\vert 1+\varphi_{g_{n-1}W_{m}}\left( \zeta_{n}\right) \right\vert ^{2}%
\frac{m_{g_{n-1}W_{m}}\left( \zeta_{n}\right) -\overline{m_{g_{n-1}W_{m}}%
\left( \zeta_{n}\right) }}{\zeta_{n}-\overline{\zeta}_{n}}\text{.}
\end{equation*}
Since (\ref{50}) implies $m_{q_{\zeta}W_{m}}(z)=\left( d_{\zeta}m\right)
\left( z\right) $, we have%
\begin{equation}
m_{g_{n-1}W_{m}}\left( z\right) =\left( d_{\overline{\zeta}%
_{n-1}}d_{\zeta_{n-1}}\cdots d_{\overline{\zeta}_{1}}d_{\zeta_{1}}m\right)
\left( z\right) \text{.}  \label{40}
\end{equation}
Therefore, Lemma\ref{l9} implies $m_{g_{n-1}W_{m}}\in\mathcal{H}$ due to $%
m\in\mathcal{H}$, hence $m_{g_{n-1}W_{m}}\left( z\right) \in\mathbb{C}_{+}$
for any $z\in\mathbb{C}_{+}\backslash\left( \mathbb{R}\cup i\mathbb{R}%
\right) $. Consequently, we see $\tau_{g_{n-1}W_{m}}\left( q_{\zeta_{n}}q_{%
\overline{\zeta}_{n}}\right) \geq0$ for any $\zeta_{n}\in\mathbb{C}%
_{+}\backslash\left( I_{s}\cup iI_{s}\right) $, and $\tau_{g_{n-1}W_{m}}%
\left( q_{\zeta_{n}}q_{\overline{\zeta}_{n}}\right) >0$ there from (ii) of
Lemma\ref{l6}. This completes the induction and we have%
\begin{equation*}
\tau_{W_{m}}\left( g\right) =\tau_{W_{m}}\left( g_{n-1}\right)
\tau_{g_{n-1}W_{m}}\left( q_{\zeta_{n}}q_{\overline{\zeta}_{n}}\right) >0%
\text{.}
\end{equation*}
The last statement is easily verified by starting from $g=g_{n}$ and noting $%
m_{g_{n}W_{m}}\in\mathcal{H}$. The property (ii) in (\ref{37}) follows from
Corollary\ref{c4}, since $gW_{m}\in Gr_{+}^{\left( 2\right) }$.
\end{proof}

\subsection{Reflectionless property of underlying potentials}

Reflectionless property was originally introduced for decaying potentials
with vanishing reflection coefficients. However, for our purpose it is
better to define this property for more general potentials.

For a real valued $q\in L_{loc}^{1}\left( \mathbb{R}\right) $ with $%
\pm\infty $ boundaries of limit point type let $m_{\pm}$ be the Weyl
functions. Let $F$ be a Borel set in $\mathbb{R}$ with positive Lebesgue
measure. Then, $q$ is called \emph{reflectionless} on $F$ if%
\begin{equation*}
m_{+}\left( \xi+i0\right) =-\overline{m_{-}\left( \xi+i0\right) }\text{ \ \
\ \ for a.e. \ }\xi\in F
\end{equation*}
holds. It can be shown without difficulty that $\overline{F}\subset$ \textrm{%
sp} $L_{q}$, where $L_{q}$ is the Schr\"{o}dinger operator with potential $q$%
. In particular, any periodic potential is reflectionless on the spectrum.

Define%
\begin{equation}
m(z)=\left\{ 
\begin{array}{c}
-m_{+}\left( -z^{2}\right) \text{ \ for }\func{Re}z>0 \\ 
m_{-}\left( -z^{2}\right) \text{ \ \ \ for }\func{Re}z<0%
\end{array}
\right. \text{.}  \label{41}
\end{equation}
If $F\in\mathcal{B}\left( \mathbb{R}_{+}\right) $, then $q$ is
reflectionless on $F$ if and only if%
\begin{equation}
m\left( i\xi+0\right) =m\left( i\xi-0\right) \text{ \ \ for a.e. }\xi \in%
\sqrt{F}  \label{42}
\end{equation}
holds.

Define%
\begin{equation*}
\mathcal{Q}_{r}=\left\{ q=q_{W}\text{; \ \ }W\in Gr_{+}^{\left( 2\right)
}\left( \mathbb{D}_{r}\right) \text{ and }m_{W}\in\mathcal{M}_{r}\right\} 
\text{.}
\end{equation*}

\begin{proposition}
\label{p8}If $q\in\mathcal{Q}_{r}$, then, $q$ is reflectionless on $\left(
r^{2},\text{ }\infty\right) $ and \textrm{sp} $L_{q}\subset\lbrack-r^{2},$ $%
\infty)$. Conversely, if $q$ is reflectionless on $\left( r^{2},\text{ }%
\infty\right) $ and \textrm{sp} $L_{q}\subset\lbrack-r^{2},$ $\infty)$, then 
$q\in\mathcal{Q}_{r}$.
\end{proposition}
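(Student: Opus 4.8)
The plan is to make everything flow from Proposition~\ref{p7}, which identifies the abstract $m$-function $m_W$ with the combination $m$ of the Weyl functions $m_\pm$ given in (\ref{41}) (= (\ref{56})), together with the observation that condition (ii) in the definition (\ref{37}) of $\mathcal{M}_r$ records precisely \emph{where} $m_W$ is singular. Once $m_W=m$ is known, the analytic statement ``$m$ is holomorphic across the imaginary axis for $|\func{Im}z|>r$'' is converted into the reflectionless property by the equivalence (\ref{42}), and the analytic statement ``$m$ is holomorphic on the real rays $|z|>r$'' is converted into the absence of spectrum below $-r^2$.

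For the forward implication I would start from $q=q_W$ with $W\in Gr_+^{(2)}(\mathbb{D}_r)$ and $m_W\in\mathcal{M}_r$. Since by Proposition~\ref{p6} the potential depends on $W$ only through $m_W$, I may replace $W$ by the canonical space $W_{m_W}$ of Proposition~\ref{p11}, which has the same $m$-function and hence the same potential. The advantage is that Proposition~\ref{p11} then gives $m_{e_yW}\in\mathcal{M}_s$ for every $y\in\mathbb{R}$ (any $s>r$); inserting this into the representation (\ref{58}) furnished by Lemma~\ref{l11} bounds the representing measures $\sigma_y$ uniformly in $y$, which is exactly the normality of $\{m_{e_xW}\}_{x\in\mathbb{R}}$ required to invoke Proposition~\ref{p7}. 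That proposition identifies $m_W$ with (\ref{41}) and, through Lemma~\ref{l8}, gives $-r^2\le\inf\mathrm{sp}\,L_q$. Condition (ii) of (\ref{37}) now finishes the job: holomorphy of $m_W$ across $z=i\xi$ for $\xi\in\sqrt{(r^2,\infty)}=(r,\infty)$ gives $m_W(i\xi+0)=m_W(i\xi-0)$, so $q$ is reflectionless on $(r^2,\infty)$ by (\ref{42}), while holomorphy of $m_W$ on $\{|z|>r\}\cap\mathbb{R}$ transfers to holomorphy of $m_\pm$ on $(-\infty,-r^2)$, so no spectrum lies there and $\mathrm{sp}\,L_q\subset[-r^2,\infty)$.

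For the converse I would assume $q$ reflectionless on $(r^2,\infty)$ with $\mathrm{sp}\,L_q\subset[-r^2,\infty)$ (so both ends are limit point and $m_\pm$ exist), define $m$ by (\ref{41}), and check $m\in\mathcal{M}_r$. Property (i) is immediate because $m_\pm$ are Herglotz and $z\mapsto-z^2$ carries each open quadrant of $\mathbb{C}_+$ into $\mathbb{C}_\mp$; property (iii) follows from the standard high-energy asymptotics $m_\pm(\lambda)=-\sqrt{-\lambda}+O(|\lambda|^{-1/2})$, which become $m(z)=z+O(z^{-1})$, and the normalization $m(r)>m(-r)$ follows from positivity of the resolvent of $L_q$ below its spectrum. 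The real content is property (ii): Dirichlet decoupling at $0$ gives $\inf\mathrm{sp}\,L_\pm\ge\inf\mathrm{sp}\,L_q\ge-r^2$, so $m_\pm$ are holomorphic on $\mathbb{C}\setminus[-r^2,\infty)$, which makes $m$ holomorphic off $I_r\cup i\mathbb{R}$ in each half-plane $\{\pm\func{Re}z>0\}$; the reflectionless hypothesis and (\ref{42}) then let the two branches continue analytically across $z=i\xi$ for $|\xi|>r$ (a Schwarz-reflection/Morera argument, using that the a.e.-matching boundary values are the boundary values of the two one-sided holomorphic functions), leaving singularities only on $I_r\cup iI_r$. Hence $m\in\mathcal{M}_r$, and Proposition~\ref{p11} produces $W_m\in Gr_+^{(2)}$ with $m_{W_m}=m$. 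Finally $q_{W_m}=q$: by Proposition~\ref{p7} the Weyl functions of $q_{W_m}$ are obtained from $m_{W_m}=m$ by inverting (\ref{41}), hence equal $m_\pm$, and uniqueness of the inverse spectral problem (Gelfand--Levitan--Marchenko) forces $q_{W_m}=q$, so $q\in\mathcal{Q}_r$.

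The hard part will be the two analytic-continuation arguments that translate the reflectionless identity (an a.e. boundary equality) into genuine single-valued holomorphy of $m$ across $i(r,\infty)$ and conversely; this is where the edge-of-the-wedge/Morera reasoning must be carried out carefully. The other point needing care is the verification of normality of $\{m_{e_xW}\}$ in the forward direction — the reduction to the canonical space $W_{m_W}$ via Proposition~\ref{p6} is what makes Proposition~\ref{p11} applicable and keeps this step clean — and, in the converse, the bookkeeping that the potential attached to $m$ is independent of the auxiliary radius $s>r$ used in Proposition~\ref{p11}.
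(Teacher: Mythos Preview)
Your approach is essentially the paper's own: forward via Proposition~\ref{p7} and (\ref{42}), converse by checking $m\in\mathcal{M}_r$ and invoking Proposition~\ref{p11}. You are in fact more careful than the paper on two points it leaves implicit: you explicitly verify the normality hypothesis of Proposition~\ref{p7} (by passing to $W_{m_W}$ via Proposition~\ref{p6} so that the last clause of Proposition~\ref{p11} applies to every $e_yW$, feeding Lemma~\ref{l11} and the uniform measure bound), and you close the loop $q_{W_m}=q$ in the converse via Borg--Marchenko uniqueness, which the paper does not spell out. One small caveat: Proposition~\ref{p11} produces $W_m\in Gr_+^{(2)}(\mathbb{D}_s)$ only for $s>r$, so strictly you obtain $q\in\mathcal{Q}_s$ for every $s>r$; the paper has the same slack in its definition of $\mathcal{Q}_r$, so this is not a defect of your argument.
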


\begin{proof}
The first assertion follows from Lemma\ref{l8} and Proposition\ref{p7}. Due
to Proposition\ref{p11} it is sufficient to show $m\in\mathcal{M}_{r}$ for
the proof of the second assertion. Since (\ref{42}) for $\left( r,\text{ }%
\infty\right) $ implies $m$ is holomorphic outside of $\left[ -r\text{, }r%
\right] \cup i\left[ -r\text{, }r\right] $, and the other properties of $%
\mathcal{M}_{r}$ are clearly satisfied by $m$, we have $m\in\mathcal{M}_{r}$.
\end{proof}

\subsection{Proof of Theorem 2}

Now we construct the KdV flow. Set%
\begin{equation*}
\left\{ 
\begin{array}{l}
\mathcal{M}_{\infty}=\bigcup\limits_{r>0}\mathcal{M}_{r}\smallskip \\ 
\mathit{\Gamma}_{\func{real}}^{\infty}=\left\{ g=e^{h}\text{; \ \ }h\text{
is an entire function with }\overline{h}=h\text{.}\right\}%
\end{array}
\right. \text{.}
\end{equation*}
Then, Proposition\ref{p8} shows%
\begin{equation*}
\mathcal{Q}_{\infty}=\dbigcup _{r>0}\mathcal{Q}_{r}\text{,}
\end{equation*}
where the definition of $\mathcal{Q}_{\infty}$ is given (\ref{59}). On the
other hand, Proposition\ref{p11} shows $\mathcal{M}_{\infty}$ corresponds to 
$\mathcal{Q}_{\infty}$ through (\ref{41}) one to one. For $q\in\mathcal{Q}%
_{\infty}$ define $m\in\mathcal{M}_{\infty}$ by (\ref{41}). Then, $gW_{m}\in
Gr_{\mathcal{+}}^{\left( 2\right) }$ for $g\in\mathit{\Gamma}_{\func{real}%
}^{\infty}$ holds for some $r>0$ due to Proposition\ref{p11}. Then, one can
define $\widetilde{q}(x)=-2\partial _{x}^{2}\log\tau_{gW_{m}}\left(
e_{x}\right) \in\mathcal{Q}_{\infty}$. The property of $\tau_{W}$ and
Proposition\ref{p11} show%
\begin{equation*}
\tau_{gW_{m}}\left( e_{x}\right) =\tau_{W_{m}}\left( ge_{x}\right)
/\tau_{W_{m}}\left( g\right) =\tau_{m}\left( ge_{x}\right) /\tau _{m}\left(
g\right) \text{,}
\end{equation*}
hence%
\begin{equation*}
\widetilde{q}(x)=-2\partial_{x}^{2}\log\tau_{m}\left( ge_{x}\right) \text{,}
\end{equation*}
which is denoted by $\left( K(g)q\right) \left( x\right) $. The flow
property of $K(g)$ is verified as follows. Since the potential $K\left(
g_{2}\right) q$ is associated with $g_{2}W_{m}\in Gr_{+}^{\left( 2\right) }$%
, we see%
\begin{align*}
\left( K\left( g_{1}g_{2}\right) q\right) \left( x\right) &
=-2\partial_{x}^{2}\log\tau_{g_{1}g_{2}W_{m}}\left( e_{x}\right) \\
& =-2\partial_{x}^{2}\log\tau_{g_{1}\left( g_{2}W_{m}\right) }\left(
e_{x}\right) =\left( K(g_{1})K\left( g_{2}\right) q\right) \left( x\right) 
\text{.}
\end{align*}

\section{Proof of Theorem 3}

In this section we give a more concrete representation of $%
\tau_{m}(g)=\tau_{W_{m}}(g)$.

For $m\in\mathcal{M}_{r}$ set%
\begin{equation*}
\mathit{\Pi}_{m}=\left( 
\begin{array}{cc}
1 & m_{e} \\ 
0 & m_{o}%
\end{array}
\right) \text{, \ \ \ \ \ }\boldsymbol{W}_{m}=\mathit{\Pi}_{m}\boldsymbol{H}%
_{+}\text{.}
\end{equation*}
Then, $\boldsymbol{W}_{m}\in Gr^{\left( 2\right) }\left( \mathbb{D}%
_{s}\right) $ for $s>r$ and its characteristic matrix is $\mathit{\Pi}_{m}$.
Since $g^{-1}\mathfrak{p}_{+}gA_{W_{m}}$ is unitarily equivalent to $G^{-1}%
\mathfrak{p}_{+}GA_{\boldsymbol{W}_{m}}$, where%
\begin{equation*}
\left\{ 
\begin{array}{l}
G=\left( 
\begin{array}{cc}
g_{e}(z) & zg_{o}(z) \\ 
g_{o}(z) & g_{e}(z)%
\end{array}
\right) \smallskip \\ 
A_{\boldsymbol{W}_{m}}\boldsymbol{u}=\mathit{\Pi}_{m}\boldsymbol{T}\left( 
\mathit{\Pi}_{m}\right) ^{-1}\boldsymbol{u}-\boldsymbol{u}%
\end{array}
\right. \text{,}
\end{equation*}
its $\tau$-function is%
\begin{equation*}
\tau_{W_{m}}(g)=\det\left( I+G^{-1}\mathfrak{p}_{+}GA_{\boldsymbol{W}%
_{m}}\right) \text{.}
\end{equation*}
In this case everything is discussed in the Hilbert space $H=L^{2}\left(
\partial\mathbb{D}_{s}\right) $ and $\boldsymbol{H}=H\times H$. As we have
seen in the first section, $\boldsymbol{T}\left( \mathit{\Pi}_{m}\right) $
is invertible if and only if so is $T\left( m_{o}\right) $. The present $%
m_{o}$ satisfies the condition of Lemma\ref{l12}, hence we have%
\begin{equation*}
\boldsymbol{T}\left( \mathit{\Pi}_{m}\right) ^{-1}=\left( 
\begin{array}{cc}
I & -T\left( m_{e}\right) T\left( m_{o}^{-1}\right) \\ 
0 & T\left( m_{o}^{-1}\right)%
\end{array}
\right) \text{.}
\end{equation*}
Therefore, for $\boldsymbol{u}=u_{1}\boldsymbol{e}_{1}+u_{2}\boldsymbol{e}%
_{2}\in\boldsymbol{H}_{+}$%
\begin{align*}
& A_{\boldsymbol{W}_{m}}\boldsymbol{u} \\
& =\left( u_{1}-T\left( m_{e}\right) T\left( m_{o}^{-1}\right)
u_{2}+m_{e}T\left( m_{o}^{-1}\right) u_{2}\right) \boldsymbol{e}%
_{1}+m_{o}T\left( m_{o}^{-1}\right) u_{2}\boldsymbol{e}_{2}-\boldsymbol{u} \\
& =\mathfrak{p}_{-}m_{e}T\left( m_{o}^{-1}\right) u_{2}\boldsymbol{e}_{1}+%
\mathfrak{p}_{-}m_{o}T\left( m_{o}^{-1}\right) u_{2}\boldsymbol{e}_{2},
\end{align*}
thus%
\begin{align}
& G^{-1}\mathfrak{p}_{+}GA_{\boldsymbol{W}_{m}}\boldsymbol{u}  \notag \\
& =G^{-1}\mathfrak{p}_{+}G\mathfrak{p}_{-}m_{e}T\left( m_{o}^{-1}\right)
u_{2}\boldsymbol{e}_{1}+G^{-1}\mathfrak{p}_{+}G\mathfrak{p}_{-}m_{o}T\left(
m_{o}^{-1}\right) u_{2}\boldsymbol{e}_{2}\text{.}  \label{44}
\end{align}
Then, denoting by $\mathfrak{\pi}_{1}$, $\mathfrak{\pi}_{2}$ the projections%
\begin{equation*}
\mathfrak{\pi}_{1}\boldsymbol{u}=\left( \boldsymbol{u}\cdot\boldsymbol{e}%
_{1}\right) \boldsymbol{e}_{1}\text{, \ \ }\mathfrak{\pi}_{2}\boldsymbol{u}%
=\left( \boldsymbol{u}\cdot\boldsymbol{e}_{2}\right) \boldsymbol{e}_{2}\text{%
,}
\end{equation*}
from (\ref{44}) we see $G^{-1}\mathfrak{p}_{+}GA_{\boldsymbol{W}_{m}}%
\mathfrak{\pi}_{1}=0$ and%
\begin{equation*}
I+G^{-1}\mathfrak{p}_{+}GA_{\boldsymbol{W}_{m}}=\left( 
\begin{array}{cc}
I_{1} & \mathfrak{\pi}_{1}G^{-1}\mathfrak{p}_{+}GA_{\boldsymbol{W}}\mathfrak{%
\pi}_{2} \\ 
0 & I_{2}+\mathfrak{\pi}_{2}G^{-1}\mathfrak{p}_{+}GA_{\boldsymbol{W}}%
\mathfrak{\pi}_{2}%
\end{array}
\right) \text{,}
\end{equation*}
hence%
\begin{equation*}
\tau_{W_{m}}(g)=\det\left( I_{2}+\mathfrak{\pi}_{2}G^{-1}\mathfrak{p}_{+}GA_{%
\boldsymbol{W}}\mathfrak{\pi}_{2}\right) \text{.}
\end{equation*}
On the other hand, (\ref{44}) implies also%
\begin{align*}
& \mathfrak{\pi}_{2}G^{-1}\mathfrak{p}_{+}GA_{\boldsymbol{W}}\mathfrak{\pi }%
_{2} \\
& =\left( \left( \widehat{g}_{o}\mathfrak{p}_{+}g_{e}+\widehat{g}_{e}%
\mathfrak{p}_{+}g_{o}\right) \mathfrak{p}_{-}m_{e}+\left( \widehat {g}_{o}%
\mathfrak{p}_{+}zg_{o}+\widehat{g}_{e}\mathfrak{p}_{+}g_{e}\right) \mathfrak{%
p}_{-}m_{o}\right) T\left( m_{o}^{-1}\right) \\
& =\left( \left( \widehat{g}_{o}\mathfrak{p}_{+}g_{e}+\widehat{g}_{e}%
\mathfrak{p}_{+}g_{o}\right) m_{e}+\left( \widehat{g}_{o}\mathfrak{p}%
_{+}zg_{o}+\widehat{g}_{e}\mathfrak{p}_{+}g_{e}\right) m_{o}\right) T\left(
m_{o}^{-1}\right) \\
& -\left( \left( \widehat{g}_{o}\mathfrak{p}_{+}g_{e}+\widehat{g}_{e}%
\mathfrak{p}_{+}g_{o}\right) T\left( m_{e}\right) +\left( \widehat {g}_{o}%
\mathfrak{p}_{+}zg_{o}+\widehat{g}_{e}\mathfrak{p}_{+}g_{e}\right) T\left(
m_{o}\right) \right) T\left( m_{o}^{-1}\right) \\
& =\left( \left( \widehat{g}_{o}\mathfrak{p}_{+}g_{e}+\widehat{g}_{e}%
\mathfrak{p}_{+}g_{o}\right) m_{e}+\left( \widehat{g}_{o}\mathfrak{p}%
_{+}zg_{o}+\widehat{g}_{e}\mathfrak{p}_{+}g_{e}\right) m_{o}\right) T\left(
m_{o}^{-1}\right) -I \\
& =\left( \widehat{g}_{o}T\left( \left( gm\right) _{e}\right) +\widehat{g}%
_{e}T\left( \left( gm\right) _{o}\right) -T\left( m_{o}\right) \right)
T\left( m_{o}^{-1}\right)
\end{align*}
with $\widehat{g}=g^{-1}$. Therefore, we have%
\begin{equation}
\tau_{W_{m}}(g)=\det\left( I+\left( \widehat{g}_{o}T\left( \left( gm\right)
_{e}\right) +\widehat{g}_{e}T\left( \left( gm\right) _{o}\right) -T\left(
m_{o}\right) \right) T\left( m_{o}^{-1}\right) \right)  \label{45}
\end{equation}
Recall $r<s$ and $m\in\mathcal{M}_{r}$. Let $C$, $C^{\prime}$ be simple
closed curves of (\ref{f1}) in the introduction surrounding the interval $%
\left[ -r,r\right] $ and contained in $\mathbb{D}_{s}$. Then, it holds that
for $f\in H_{+}$ and $z$ located inside of $C^{\prime}$%
\begin{align*}
T\left( m_{o}^{-1}\right) f\left( z\right) & =\dfrac{1}{2\pi i}%
\int_{\left\vert \xi\right\vert =s}\dfrac{m_{o}\left( \lambda^{\prime
}\right) ^{-1}f\left( \lambda^{\prime}\right) }{\lambda^{\prime}-z}%
d\lambda^{\prime} \\
& =\dfrac{1}{2\pi i}\dint _{C^{\prime}}\dfrac{m_{o}\left(
\lambda^{\prime}\right) ^{-1}f\left( \lambda^{\prime }\right) }{%
\lambda^{\prime}-z}d\lambda^{\prime} \\
& \equiv Tf(z)\text{,}
\end{align*}
and for $z$ satisfying $\left\vert z\right\vert \leq s$%
\begin{align*}
& \left( \widehat{g}_{o}T\left( \left( gm\right) _{e}\right) +\widehat{g}%
_{e}T\left( \left( gm\right) _{o}\right) -T\left( m_{o}\right) \right)
f\left( z\right) \\
& =\dfrac{1}{2\pi i}\int_{\left\vert \lambda\right\vert =s}\dfrac{\widehat {g%
}_{o}\left( z\right) \left( gm\right) _{e}\left( \lambda\right) +\widehat{g}%
_{o}\left( z\right) \left( gm\right) _{o}\left( \lambda\right) -m_{o}\left(
\lambda\right) }{\lambda-z}f\left( \lambda\right) d\lambda \\
& =\dfrac{1}{2\pi i}\int_{C}\dfrac{\widehat{g}_{o}\left( z\right) \left(
gm\right) _{e}\left( \lambda\right) +\widehat{g}_{o}\left( z\right) \left(
gm\right) _{o}\left( \lambda\right) -m_{o}\left( \lambda\right) }{\lambda-z}%
f\left( \lambda\right) d\lambda \\
& \equiv Sf(z)\text{.}
\end{align*}
It should be noted that $S$ does not change if we replace $m$ by $\widetilde{%
m}$ in the integration on the curve $C$ defined by%
\begin{equation*}
\widetilde{m}\left( z\right) =m(z)-\delta\left( z\right) \text{,}
\end{equation*}
where $\delta_{e}$, $\delta_{o}$ are holomorphic in a simply connected
domain containing $C$. We can regard $S$ and $T$ as operators from $%
L^{2}\left( C\right) $ to $H_{+}\left( =H_{+}\left( \mathbb{D}_{s}\right)
\right) $ and from $H_{+}$ to $L^{2}\left( C\right) $ respectively. Now (\ref%
{45}) implies%
\begin{equation*}
\tau_{W_{m}}(g)=\det\left( I+ST\right) =\det\left( I+TS\right) \text{.}
\end{equation*}
The operator $TS:L^{2}\left( C\right) \rightarrow L^{2}\left( C\right) $ is%
\begin{equation*}
\left( TSu\right) \left( z\right) =\dfrac{1}{2\pi i}\dint _{C^{\prime}}%
\dfrac{m_{o}\left( \lambda^{\prime}\right) ^{-1}}{\lambda^{\prime}-z}%
d\lambda^{\prime}\dfrac{1}{2\pi i}\dint _{C}L_{g}\left(
\lambda^{\prime},\lambda\right) u\left( \lambda\right) d\lambda\text{,}
\end{equation*}
where%
\begin{equation*}
L_{g}\left( z,\lambda\right) =M_{g}\left( z,\lambda\right) -\dfrac {%
m_{o}\left( \lambda\right) }{\lambda-z}\text{.}
\end{equation*}
Refer to (\ref{57}) for the definition of $M_{g}$. Note%
\begin{align*}
& \dfrac{1}{2\pi i}\dint _{C^{\prime}}\dfrac{m_{o}\left(
\lambda^{\prime}\right) ^{-1}}{\lambda^{\prime}-z}\dfrac{m_{o}\left(
\lambda\right) }{\lambda-\lambda^{\prime}}d\lambda ^{\prime} \\
& =\dfrac{m_{o}\left( \lambda\right) }{\lambda-z}\dfrac{1}{2\pi i}\dint
_{C^{\prime}}\left( \dfrac{m_{o}\left( \lambda^{\prime}\right) ^{-1}}{%
\lambda^{\prime }-z}+\dfrac{m_{o}\left( \lambda^{\prime}\right) ^{-1}}{%
\lambda -\lambda^{\prime}}\right) d\lambda^{\prime}=0
\end{align*}
for $z$, $\lambda$ located inside of $C^{\prime}$. Consequently, we obtain%
\begin{equation*}
\left( TSu\right) \left( z\right) =\left( N_{m}(g)u\right) \left( z\right) 
\text{,}
\end{equation*}
which completes the proof of Theorem\ref{t1} in the introduction.

This formula for the $\tau$-functions makes it possible to establish a
theory in a more general framework, which will be realized in the second
paper.

\section{Appendix}

\subsection{Calculation of typical $\mathit{\Gamma}$-actions}

(\ref{20}) connects the $\tau$-function with $\varphi_{W}$, namely%
\begin{equation*}
\tau_{W}\left( q_{\zeta}\right) =1+\varphi_{W}\left( \zeta\right)
\end{equation*}
for $\zeta$ such that $\left\vert \zeta\right\vert >r$, where $q_{\zeta
}\left( z\right) =\left( 1-z/\zeta\right) ^{-1}$. To calculate $\tau_{W}$
for other $g\in\mathit{\Gamma}$ we compute $\varphi_{q_{\zeta}W}$, $%
\psi_{q_{\zeta}W}$. Let%
\begin{equation*}
H_{-}\ni\varphi_{q_{\zeta}W}\left( z\right) =\dfrac{a_{1}}{z}+\dfrac{a_{2}}{%
z^{2}}+\cdots\text{.}
\end{equation*}
Since $\left( 1-z/\zeta\right) +\left( 1-z/\zeta\right) \varphi_{q_{\zeta
}W}\left( z\right) \in W$ due to $1+\varphi_{q_{\zeta}W}\left( z\right) \in
q_{\zeta}W$, the decomposition%
\begin{align*}
\left( 1-\dfrac{z}{\zeta}\right) +\left( 1-\dfrac{z}{\zeta}\right)
\varphi_{q_{\zeta}W}\left( z\right) & =\left( 1-\dfrac{z}{\zeta}\right) -%
\dfrac{z}{\zeta}\left( \dfrac{a_{1}}{z}+\dfrac{a_{2}}{z^{2}}+\right)
+\varphi_{q_{\zeta}W}\left( z\right) \\
& =\left( 1-\dfrac{a_{1}}{\zeta}\right) -\dfrac{z}{\zeta}+H_{-}
\end{align*}
shows%
\begin{align*}
& \left( 1-\dfrac{z}{\zeta}\right) +\left( 1-\dfrac{z}{\zeta}\right)
\varphi_{q_{\zeta}W}\left( z\right) \\
& =\left( 1-\dfrac{a_{1}}{\zeta}\right) \left( 1+\varphi_{W}\left( z\right)
\right) -\dfrac{1}{\zeta}\left( z+\psi_{W}\left( z\right) \right) \text{.}
\end{align*}
We have used here the bijectivity of $\mathfrak{p}_{+}:W\rightarrow H_{+}$.
Setting $z=\zeta$, we see%
\begin{equation*}
\left( 1-\dfrac{a_{1}}{\zeta}\right) \left( 1+\varphi_{W}\left( \zeta\right)
\right) -\dfrac{1}{\zeta}\left( \zeta+\psi_{W}\left( \zeta\right) \right) =0%
\text{,}
\end{equation*}
which yields

\begin{equation*}
a_{1}=\zeta-\dfrac{\zeta+\psi_{W}\left( \zeta\right) }{1+\varphi_{W}\left(
\zeta\right) }\text{,}
\end{equation*}
hence%
\begin{align}
1+\varphi_{q_{\zeta}W}\left( z\right) & =\dfrac{\dfrac{\zeta+\psi _{W}\left(
\zeta\right) }{\zeta\left( 1+\varphi_{W}\left( \zeta\right) \right) }\left(
1+\varphi_{W}\left( z\right) \right) -\dfrac{1}{\zeta }\left(
z+\psi_{W}\left( z\right) \right) }{1-\dfrac{z}{\zeta}}  \notag \\
& =\left( 1+\varphi_{W}\left( z\right) \right) \dfrac{m_{W}\left(
\zeta\right) -m_{W}\left( z\right) }{\zeta-z}\text{.}  \label{47}
\end{align}
Similarly, for $\psi_{q_{\zeta}W}\left( z\right) =b_{1}/z+b_{2}/z^{2}+\cdots$%
\begin{equation*}
\left( 1-\dfrac{z}{\zeta}\right) z+\left( 1-\dfrac{z}{\zeta}\right)
\psi_{q_{\zeta}W}\left( z\right) =-\dfrac{b_{1}}{\zeta}\left( 1+\varphi
_{W}\left( z\right) \right) +\left( z+\psi_{W}\left( z\right) \right) -%
\dfrac{1}{\zeta}\left( z^{2}+A_{W}z^{2}\right) \text{,}
\end{equation*}
hence, setting $z=\zeta$, we have%
\begin{equation*}
-\dfrac{b_{1}}{\zeta}\left( 1+\varphi_{W}\left( \zeta\right) \right) +\left(
\zeta+\psi_{W}\left( \zeta\right) \right) -\dfrac{1}{\zeta}\left(
\zeta^{2}+\left( A_{W}z^{2}\right) \left( \zeta\right) \right) =0\text{,}
\end{equation*}
and%
\begin{equation*}
b_{1}=\zeta\dfrac{\zeta+\psi_{W}\left( \zeta\right) }{1+\varphi_{W}\left(
\zeta\right) }-\dfrac{\zeta^{2}+\left( A_{W}z^{2}\right) \left( \zeta\right) 
}{1+\varphi_{W}\left( \zeta\right) }\text{.}
\end{equation*}
To compute $A_{W}z^{2}$ we note the identity $A_{W}z^{2}\cdot=\mathfrak{p}%
_{-}z^{2}A_{W}\cdot-A_{W}\mathfrak{p}_{+}z^{2}A_{W}\cdot$, and set $\cdot=1$%
. Then, expanding $\varphi_{W}\left( z\right) =a_{1}/z+a_{2}/z^{2}+\cdots$
yields%
\begin{align*}
z^{2}+A_{W}z^{2} & =z^{2}+\mathfrak{p}_{-}z^{2}A_{W}1-A_{W}\mathfrak{p}%
_{+}z^{2}A_{W}1 \\
& =z^{2}+\mathfrak{p}_{-}z^{2}\varphi_{W}-A_{W}\mathfrak{p}%
_{+}z^{2}\varphi_{W} \\
& =\left( 1+\varphi_{W}\left( z\right) \right) \left(
z^{2}-a_{2}-a_{1}m_{W}\left( z\right) +a_{1}^{2}\right) \text{.}
\end{align*}
Therefore%
\begin{align*}
& -\dfrac{b_{1}}{\zeta}\left( 1+\varphi_{W}\left( z\right) \right) +\left(
z+\psi_{W}\left( z\right) \right) -\dfrac{1}{\zeta}\left(
z^{2}+A_{W}z^{2}\right) \\
& =\left( 1+\varphi_{W}\left( z\right) \right) \left( -\dfrac{b_{1}}{\zeta}%
+m_{W}(z)-a_{1}-\dfrac{z^{2}-a_{2}-a_{1}m_{W}\left( z\right) +a_{1}^{2}}{%
\zeta}\right) \\
& =\dfrac{1+\varphi_{W}\left( z\right) }{\zeta}\left( \left( \zeta
+a_{1}\right) \left( m_{W}(z)-m_{W}(\zeta)\right) +\zeta^{2}-z^{2}\right) 
\text{,}
\end{align*}
which shows%
\begin{align}
\dfrac{z+\psi_{q_{\zeta}W}\left( z\right) }{1+\varphi_{W}\left( z\right) } &
=\dfrac{\dfrac{1}{\zeta}\left( \left( \zeta+a_{1}\right) \left(
m_{W}(z)-m_{W}(\zeta)\right) +\zeta^{2}-z^{2}\right) }{1-\dfrac{z}{\zeta}} 
\notag \\
& =\dfrac{\left( \zeta+a_{1}\right) \left( m_{W}(z)-m_{W}(\zeta)\right) }{%
\zeta-z}+\zeta+z\text{.}  \label{48}
\end{align}
On the other hand, from (\ref{47})%
\begin{equation}
a_{1}\left( q_{\zeta}W\right) =\lim_{z\rightarrow\infty}z\varphi_{q_{\zeta
}W}\left( z\right) =-m_{W}\left( \zeta\right) +\zeta+a_{1}\text{,}
\label{49}
\end{equation}
hence, from (\ref{48}), (\ref{49})%
\begin{align}
m_{q_{\zeta}W}(z) & =\dfrac{z+\psi_{q_{\zeta}W}\left( z\right) }{%
1+\varphi_{q_{\zeta}W}\left( z\right) }+a_{1}\left( q_{\zeta}W\right)  \notag
\\
& =\dfrac{\dfrac{\left( \zeta+a_{1}\right) \left(
m_{W}(z)-m_{W}(\zeta)\right) }{\zeta-z}+\zeta+z}{\dfrac{m_{W}\left(
\zeta\right) -m_{W}\left( z\right) }{\zeta-z}}-m_{W}\left( \zeta\right)
+\zeta +a_{1}  \notag \\
& =\dfrac{z^{2}-\zeta^{2}}{m_{W}\left( z\right) -m_{W}\left( \zeta\right) }%
-m_{W}\left( \zeta\right) \text{.}  \label{50}
\end{align}
In the next step we compute $\tau_{W}\left(
q_{\zeta_{1}}q_{\zeta_{2}}\right) $. From (\ref{19}) it follows that%
\begin{equation*}
\tau_{W}\left( q_{\zeta_{1}}q_{\zeta_{2}}\right) =\tau_{W}\left(
q_{\zeta_{1}}\right) \tau_{q_{\zeta_{1}}W}\left( q_{\zeta_{2}}\right)
=\left( 1+\varphi_{W}\left( \zeta_{1}\right) \right) \left(
1+\varphi_{q_{\zeta_{1}}W}\left( \zeta_{2}\right) \right) \text{,}
\end{equation*}
hence, from (\ref{47})%
\begin{equation}
\tau_{W}\left( q_{\zeta_{1}}q_{\zeta_{2}}\right) =\left( 1+\varphi
_{W}\left( \zeta_{1}\right) \right) \left( 1+\varphi_{W}\left( \zeta
_{2}\right) \right) \dfrac{m_{W}\left( \zeta_{1}\right) -m_{W}\left(
\zeta_{2}\right) }{\zeta_{1}-\zeta_{2}}\text{.}  \label{51}
\end{equation}
In the last step we calculate $\tau_{W}\left( p_{\zeta}\right) $ where%
\begin{equation*}
p_{\zeta}\left( z\right) =1+z/\zeta=q_{-\zeta}\left( z\right) ^{-1}
\end{equation*}
with $\zeta$ such that $\left\vert \zeta\right\vert >r$. The key observation
is%
\begin{equation*}
r_{\zeta}\left( z\right) =q_{\zeta}\left( z\right) p_{\zeta}\left( z\right)
^{-1}=q_{\zeta}\left( z\right) q_{-\zeta}\left( z\right) =\left(
1-z^{2}/\zeta^{2}\right) ^{-1}\text{,}
\end{equation*}
and (iii) of Proposition\ref{p3} implies%
\begin{equation*}
\tau_{W}\left( q_{\zeta}p_{\zeta^{\prime}}\right) =\tau_{W}\left( q_{\zeta
}q_{\zeta^{\prime}}r_{\zeta^{\prime}}^{-1}\right) =\tau_{W}\left(
r_{\zeta^{\prime}}\right) ^{-1}\tau_{W}\left(
q_{\zeta}q_{\zeta^{\prime}}\right) \text{.}
\end{equation*}
We have only to apply (\ref{51}) to $r_{\zeta}=q_{\zeta}q_{-\zeta}$ to
compute $\tau_{W}\left( r_{\zeta}\right) $. Thus%
\begin{equation}
\tau_{W}\left( r_{\zeta}\right) =\left( 1+\varphi_{W}\left( \zeta\right)
\right) \left( 1+\varphi_{W}\left( -\zeta\right) \right) \dfrac {m_{W}\left(
\zeta\right) -m_{W}\left( -\zeta\right) }{2\zeta}=\det\mathit{\Pi}_{W}\left(
\zeta^{2}\right) \text{.}  \label{53}
\end{equation}
Hence, letting $\zeta\rightarrow\infty$, we have%
\begin{align*}
\tau_{W}\left( p_{\zeta^{\prime}}\right) & =\tau_{W}\left( r_{\zeta
^{\prime}}\right) ^{-1}\tau_{W}\left( q_{\zeta^{\prime}}\right) \\
& =\dfrac{2\zeta^{\prime}\left( 1+\varphi_{W}\left( \zeta^{\prime}\right)
\right) }{\left( 1+\varphi_{W}\left( \zeta^{\prime}\right) \right) \left(
1+\varphi_{W}\left( -\zeta^{\prime}\right) \right) \left( m_{W}\left(
\zeta^{\prime}\right) -m_{W}\left( -\zeta^{\prime}\right) \right) }\text{.}
\end{align*}
Similarly%
\begin{align}
& \tau_{W}\left( q_{\zeta_{1}}q_{\zeta_{2}}\cdots q_{\zeta_{n}}p_{\zeta
_{1}^{\prime}}p_{\zeta_{2}^{\prime}}\cdots p_{\zeta_{n}^{\prime}}\right) 
\notag \\
& =\tau_{W}\left( q_{\zeta_{1}}q_{\zeta_{2}}\cdots q_{\zeta_{n}}q_{\zeta
_{1}^{\prime}}q_{\zeta_{2}^{\prime}}\cdots q_{\zeta_{n}^{\prime}}r_{\zeta
_{1}^{\prime}}^{-1}r_{\zeta_{2}^{\prime}}^{-1}\cdots
r_{\zeta_{n}^{\prime}}^{-1}\right)  \notag \\
& =\tau_{W}\left(
r_{\zeta_{1}^{\prime}}^{-1}r_{\zeta_{2}^{\prime}}^{-1}\cdots
r_{\zeta_{n}^{\prime}}^{-1}\right) \tau_{W}\left(
q_{\zeta_{1}}q_{\zeta_{2}}\cdots
q_{\zeta_{n}}q_{\zeta_{1}^{\prime}}q_{\zeta_{2}^{\prime}}\cdots
q_{\zeta_{n}^{\prime}}\right)  \notag \\
& =\left( \tau_{W}\left( r_{\zeta_{1}^{\prime}}\right) \tau_{W}\left(
r_{\zeta_{2}^{\prime}}\right) \cdots\tau_{W}\left(
r_{\zeta_{n}^{\prime}}\right) \right) ^{-1}\tau_{W}\left(
q_{\zeta_{1}}q_{\zeta_{2}}\cdots
q_{\zeta_{n}}q_{\zeta_{1}^{\prime}}q_{\zeta_{2}^{\prime}}\cdots q_{\zeta
_{n}^{\prime}}\right) \text{.}  \label{52}
\end{align}
holds.

\subsection{Herglotz function}

As we have defined it in the previous section, a holomorphic function $m$ on 
$\mathbb{C}\backslash\mathbb{R}$ is called a Herglotz function if $m$
satisfies%
\begin{equation*}
m\left( z\right) =\overline{m\left( \overline{z}\right) }\text{ \ and \ }%
\dfrac{\func{Im}m(z)}{\func{Im}z}\geq0\text{ \ for any }z\in\mathbb{C}%
\backslash\mathbb{R}\text{.}
\end{equation*}
A necessary and sufficient condition for $m$ to be a Herglotz function is
that $m$ has a representation%
\begin{equation}
m\left( z\right) =\alpha+\beta z+\int_{-\infty}^{\infty}\left( \dfrac {1}{%
\xi-z}-\dfrac{\xi}{\xi^{2}+1}\right) \sigma\left( d\xi\right)  \label{54}
\end{equation}
with a real $\alpha$, non-negative $\beta$ and measure $\sigma$ on $\mathbb{R%
}$ satisfying%
\begin{equation*}
\int_{-\infty}^{\infty}\dfrac{1}{\xi^{2}+1}\sigma\left( d\xi\right) <\infty%
\text{.}
\end{equation*}

\begin{lemma}
\label{l10}Suppose $m$ is an irrational Herglotz function. Then, so is $D_{%
\overline{\zeta}}D_{\zeta}m$ for any $\zeta\in\mathbb{C}\backslash \mathbb{R}
$ (refer to (\ref{38}) for the definition of $D_{\zeta}$).
\end{lemma}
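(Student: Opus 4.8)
The plan is to check the two defining properties of a Herglotz function for $p:=D_{\overline{\zeta}}D_{\zeta}m$, namely the reality relation $p(z)=\overline{p(\overline{z})}$ and the sign condition $\func{Im}p(z)/\func{Im}z\ge 0$ on $\mathbb{C}\setminus\mathbb{R}$. Since $D_{\zeta}$ and $D_{\overline{\zeta}}$ commute, the operator $D_{\overline{\zeta}}D_{\zeta}$ is invariant under $\zeta\leftrightarrow\overline{\zeta}$, so I may assume $\func{Im}\zeta>0$. For the reality relation, introduce the involution $f^{\sharp}(z)=\overline{f(\overline{z})}$; a direct substitution into (\ref{38}) shows $(D_{\zeta}f)^{\sharp}=D_{\overline{\zeta}}f^{\sharp}$ for every $f$ holomorphic off $\mathbb{R}$. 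Applying this identity twice, using $m^{\sharp}=m$ and commutativity, gives $(D_{\overline{\zeta}}D_{\zeta}m)^{\sharp}=D_{\zeta}D_{\overline{\zeta}}m=D_{\overline{\zeta}}D_{\zeta}m$, which is exactly $p(z)=\overline{p(\overline{z})}$.

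For the sign condition, set $a=m(\zeta)$ and $n=D_{\zeta}m$. A short computation gives $n(\overline{\zeta})=c-a$ with $c=\func{Im}\zeta/\func{Im}m(\zeta)$, which is real and strictly positive because $m$ is a nonconstant Herglotz function. Inserting this value into (\ref{38}) and cancelling the common factor $z-\overline{\zeta}$ in numerator and denominator, one reaches the compact form
\[
p(z)=\frac{m(z)-m(\zeta)}{1-c\,K(z,\zeta)}+m(\zeta)-c,\qquad K(z,\zeta):=\frac{m(z)-\overline{m(\zeta)}}{z-\overline{\zeta}}.
\]
The main step is then to evaluate $\func{Im}p(z)$ for $\func{Im}z>0$: expanding, all contributions that do not involve $K$ cancel in pairs once one uses $c\,\func{Im}m(\zeta)=\func{Im}\zeta$, and the expression collapses to
\[
\func{Im}p(z)=\frac{\func{Im}z}{\left\vert 1-c\,K(z,\zeta)\right\vert ^{2}}\,\frac{\func{Im}\zeta}{\func{Im}m(\zeta)}\left(\frac{\func{Im}m(z)}{\func{Im}z}\,\frac{\func{Im}m(\zeta)}{\func{Im}\zeta}-\left\vert K(z,\zeta)\right\vert ^{2}\right).
\]
The parenthesised factor is precisely the determinant of the $2\times2$ Nevanlinna--Pick matrix of $m$ at the points $z,\zeta$, which is nonnegative because the kernel $K$ is positive semidefinite, an immediate consequence of the representation (\ref{54}); the remaining factors are positive. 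Hence $\func{Im}p(z)\ge0$ on $\mathbb{C}_{+}$, and the reality relation extends the sign condition to all of $\mathbb{C}\setminus\mathbb{R}$.

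It remains to rule out poles of $p$ off $\mathbb{R}$, which is where I expect the only genuine subtlety. The derivation tacitly assumes $\delta(z):=1-c\,K(z,\zeta)\neq0$, and a zero of $\delta$ in $\mathbb{C}_{+}$ is the only place $p$ could fail to be holomorphic. Reading the displayed identity before dividing by $\left\vert\delta\right\vert^{2}$ shows that at such a zero the Pick determinant must vanish; since $m$ is irrational its $2\times2$ Pick matrix at distinct points is strictly positive definite, so this forces $z_{0}=\zeta$, where the numerator $m(z)-m(\zeta)$ also vanishes and the singularity is removable. Thus $p$ is holomorphic on $\mathbb{C}_{+}$, hence on $\mathbb{C}\setminus\mathbb{R}$ by reality, and it is again irrational because $D_{\zeta}$ is invertible, so rationality of $p$ would force rationality of $m$. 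The hard part is the cancellation in the middle step that exhibits $\func{Im}p$ as a positive multiple of the Pick determinant, together with the use of Pick non-degeneracy to exclude poles.
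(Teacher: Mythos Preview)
Your proof is correct and takes a genuinely different route from the paper's argument. The paper works entirely through the integral representation (\ref{54}): it expresses $(D_{\overline{\zeta}}D_{\zeta}m)(z)$ as $\gamma^{2}\bigl(-\int(\xi-z)^{-1}\sigma_{\zeta}(d\xi)\bigr)^{-1}-\gamma z+\text{const}$ with $\gamma=\beta+\int\sigma_{\zeta}$, then checks by hand that the leading coefficient $\beta_{1}-\gamma$ of the linear part is nonnegative and that the constant has the right imaginary sign. This is a direct computation tailored to the specific two-step transform.

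You instead reduce everything to the positivity of the Nevanlinna--Pick kernel $K(z,w)=(m(z)-\overline{m(w)})/(z-\overline{w})$: after the algebraic simplification, $\func{Im}p(z)$ becomes a positive multiple of the $2\times 2$ Pick determinant at $(z,\zeta)$, which is nonnegative because $K$ is positive semidefinite by (\ref{54}). Your treatment of the possible zeros of $\delta=1-cK$ via strict Pick positivity for irrational $m$ is also sound; one can phrase it even more simply by noting that $|\delta|^{2}\func{Im}p=\func{Im}[(m-a)\overline{\delta}]+|\delta|^{2}\func{Im}a$ extends real-analytically through any zero of $\delta$ and vanishes there, forcing the Pick determinant to vanish, hence $z_{0}=\zeta$, where the pole is removable since $\func{Im}p\ge0$ on a punctured neighbourhood excludes a genuine pole. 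The gain of your approach is conceptual: it identifies $D_{\overline{\zeta}}D_{\zeta}$ as a real Schur step and makes transparent why the Herglotz class is preserved, whereas the paper's computation is shorter and entirely self-contained but less illuminating about the underlying mechanism.
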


\begin{proof}
Assume $m$ has a representation (\ref{54}). Then%
\begin{equation*}
m\left( z\right) -m\left( \zeta\right) =\beta\left( z-\zeta\right) +\left(
z-\zeta\right) \int_{-\infty}^{\infty}\dfrac{\xi-\overline{\zeta}}{\xi-z}%
\sigma_{\zeta}\left( d\xi\right) \text{,}
\end{equation*}
with $\sigma_{\zeta}\left( d\xi\right) =\left\vert \xi-\zeta\right\vert
^{-2}\sigma\left( d\xi\right) $. Note $\sigma_{\zeta}\neq0$ since $m$ is
irrational. Hence%
\begin{equation*}
\frac{z-\zeta}{m\left( z\right) -m\left( \zeta\right) }=\dfrac{1}{%
\beta+\int_{-\infty}^{\infty}\dfrac{\xi-\overline{\zeta}}{\xi-z}\sigma
_{\zeta}\left( d\xi\right) }=\dfrac{1}{\gamma+\left( z-\overline{\zeta }%
\right) \int_{-\infty}^{\infty}\dfrac{\sigma_{\zeta}\left( d\xi\right) }{%
\xi-z}}\text{,}
\end{equation*}
where $\gamma=\beta+\int_{-\infty}^{\infty}\sigma_{\zeta}\left( d\xi\right) $%
. Then%
\begin{align*}
& \frac{z-\zeta}{m\left( z\right) -m\left( \zeta\right) }-\frac {\overline{%
\zeta}-\zeta}{m\left( \overline{\zeta}\right) -m\left( \zeta\right) } \\
& =\dfrac{1}{\gamma+\left( z-\overline{\zeta}\right) \int_{-\infty}^{\infty}%
\dfrac{\sigma_{\zeta}\left( d\xi\right) }{\xi-z}}-\dfrac{1}{\gamma }=\dfrac{%
-\left( z-\overline{\zeta}\right) \int_{-\infty}^{\infty}\dfrac{%
\sigma_{\zeta}\left( d\xi\right) }{\xi-z}}{\gamma\left( \gamma+\left( z-%
\overline{\zeta}\right) \int_{-\infty}^{\infty}\dfrac {\sigma_{\zeta}\left(
d\xi\right) }{\xi-z}\right) }\text{,}
\end{align*}
and%
\begin{align*}
\left( D_{\overline{\zeta}}D_{\zeta}m\right) \left( z\right) +\left(
D_{\zeta}m\right) \left( \overline{\zeta}\right) & =\dfrac{z-\overline {\zeta%
}}{\left( D_{\zeta}m\right) \left( z\right) -\left( D_{\zeta }m\right)
\left( \overline{\zeta}\right) } \\
& =\dfrac{\gamma^{2}}{-\int_{-\infty}^{\infty}\dfrac{\sigma_{\zeta}\left(
d\xi\right) }{\xi-z}}-\gamma z+\gamma\overline{\zeta}\text{.}
\end{align*}
Since the first term of the right side is a Herglotz function, let its
representation be%
\begin{equation*}
\dfrac{\gamma^{2}}{-\int_{-\infty}^{\infty}\dfrac{\sigma_{\zeta}\left(
d\xi\right) }{\xi-z}}=\alpha_{1}+\beta_{1}z+\int_{-\infty}^{\infty}\left( 
\frac{1}{\xi-z}-\dfrac{\xi}{\xi^{2}+1}\right) \sigma_{1}\left( d\xi\right) 
\text{.}
\end{equation*}
Then%
\begin{equation*}
\beta_{1}=\lim_{z\rightarrow\infty}\dfrac{\gamma^{2}z^{-1}}{-\int_{-\infty
}^{\infty}\dfrac{\sigma_{\zeta}\left( d\xi\right) }{\xi-z}}=\dfrac {%
\gamma^{2}}{\int_{-\infty}^{\infty}\sigma_{\zeta}\left( d\xi\right) }=\dfrac{%
\gamma^{2}}{\gamma-\beta}\text{,}
\end{equation*}
hence $\beta_{1}-\gamma=\gamma^{2}\left( \gamma-\beta\right)
^{-1}-\gamma=\beta\left( \gamma-\beta\right) ^{-1}\geq0$. On the other hand%
\begin{equation*}
\func{Im}\left( \gamma\overline{\zeta}-\left( D_{\zeta}m\right) \left( 
\overline{\zeta}\right) \right) =\func{Im}\left( \gamma\overline{\zeta}%
+m\left( \zeta\right) \right) =\left( \beta -\gamma\right) \func{Im}%
\zeta+\gamma\func{Im}\zeta =\beta\func{Im}\zeta\geq0\text{,}
\end{equation*}
which completes the proof.
\end{proof}

\begin{lemma}
\label{l11}For $r>0$ assume $m$ is holomorphic on $\mathbb{C}%
\backslash\left( \left[ -r,r\right] \cup i\left[ -r,r\right] \right) $ and
satisfies%
\begin{equation*}
\dfrac{\func{Im}m(z)}{\func{Im}z}>0\text{ \ on }\mathbb{C}\backslash\left( 
\mathbb{R}\cup i\mathbb{R}\right) \text{, \ }m(z)=\overline{m(\overline{z})}%
\text{,}
\end{equation*}
and%
\begin{equation*}
m(z)=z+O\left( z^{-1}\right)
\end{equation*}
as $z\rightarrow\infty$. Then, there exists a measure $\sigma$ on $\left[ -%
\sqrt{2}r,\sqrt{2}r\right] $ such that%
\begin{equation*}
m(z)=\sqrt{z^{2}+r^{2}}+\int_{-\sqrt{2}r}^{\sqrt{2}r}\dfrac{\sigma\left(
d\xi\right) }{\xi-\sqrt{z^{2}+r^{2}}}\text{.}
\end{equation*}
\end{lemma}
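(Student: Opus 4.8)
The plan is to reduce the statement to the Herglotz representation (\ref{54}) by a change of variables that collapses the cross $\left[ -r,r\right] \cup i\left[ -r,r\right] $ into the single slit $\left[ -\sqrt{2}r,\sqrt{2}r\right] $. Concretely, I would fix the branch of $w=\sqrt{z^{2}+r^{2}}$ cut along the segment $i\left[ -r,r\right] $ and normalized by $w\sim z$ as $z\rightarrow \infty $. First I would check that this $w$ is odd, $w(-z)=-w(z)$, and conjugation-symmetric, $w(\overline{z})=\overline{w(z)}$; both follow by analytic continuation from $z=\infty $ on the connected slit domain, since $w(-z)$ and $-w(z)$ (resp. $w(\overline{z})$ and $\overline{w(z)}$) are holomorphic and agree near infinity. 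Then I would verify that $w$ maps $\mathbb{C}\backslash \left( \left[ -r,r\right] \cup i\left[ -r,r\right] \right) $ conformally and bijectively onto $\mathbb{C}\backslash \left[ -\sqrt{2}r,\sqrt{2}r\right] $: the segment $i\left[ -r,r\right] $ is the natural branch cut of the square root and is carried onto $\left[ -r,r\right] $, while the real segment $\left[ -r,r\right] $ is sent to $\left[ -\sqrt{2}r,-r\right] \cup \left[ r,\sqrt{2}r\right] $, so that together the cross is carried onto $\left[ -\sqrt{2}r,\sqrt{2}r\right] $.

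With $z(w)$ the inverse map I would set $M(w)=m(z(w))$, holomorphic on $\mathbb{C}\backslash \left[ -\sqrt{2}r,\sqrt{2}r\right] $. The reality $M(w)=\overline{M(\overline{w})}$ follows from $m(z)=\overline{m(\overline{z})}$ together with $z(\overline{w})=\overline{z(w)}$. The crucial point is that $M$ is Herglotz. A quadrant computation shows that $z\mapsto w$ preserves the sign of the imaginary part: if $z$ lies in an open quadrant then $z^{2}$ lies in a half-plane (squaring doubles the argument), adding $r^{2}$ keeps that half-plane, and the square root halves the argument back, so $w$ lies in the same quadrant as $z$; hence $\mathrm{Im}\,M(w)$ and $\mathrm{Im}\,w$ have the same sign for every $w$ off $\mathbb{R}\cup i\mathbb{R}$ by the hypothesis $\mathrm{Im}\,m(z)/\mathrm{Im}\,z>0$. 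Since the slit $\left[ -\sqrt{2}r,\sqrt{2}r\right] $ lies on the real axis, $M$ is in fact holomorphic on the whole upper half-plane $\left\{ \mathrm{Im}\,w>0\right\} $; there $\mathrm{Im}\,M$ is harmonic, nonnegative on the dense open set off the imaginary axis, hence nonnegative throughout, and strictly positive by the minimum principle (it is not identically zero because $M(w)\sim w$). Thus $M$ is of Herglotz class.

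Finally I would invoke the representation (\ref{54}) for $M$. Holomorphy and real boundary values of $M$ on $\left( \sqrt{2}r,\infty \right) \cup \left( -\infty ,-\sqrt{2}r\right) $, which come from $m$ being real on $\left\{ \left\vert z\right\vert >r\right\} \cap \mathbb{R}$, force the representing measure $\sigma $ to be supported in $\left[ -\sqrt{2}r,\sqrt{2}r\right] $ by Stieltjes inversion, and compact support lets me absorb the $\xi /(\xi ^{2}+1)$ term into the constant. The normalization $m(z)=z+O(z^{-1})$ together with $z=w-\tfrac{r^{2}}{2w}+O(w^{-3})$ gives $M(w)=w+O(w^{-1})$, which pins down $\beta =1$ and kills the constant $\alpha $. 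Substituting $w=\sqrt{z^{2}+r^{2}}$ back into $M(w)=w+\int_{-\sqrt{2}r}^{\sqrt{2}r}\sigma (d\xi )/(\xi -w)$ yields the asserted formula.

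I expect the main obstacle to be the positivity in the second step, specifically upgrading the quadrant-wise sign control to strict positivity of $\mathrm{Im}\,M$ across the image of the imaginary $z$-axis, where the hypothesis provides no direct information and the harmonicity/minimum-principle argument is needed; setting up the branch of $\sqrt{z^{2}+r^{2}}$ and proving bijectivity of the change of variables onto $\mathbb{C}\backslash \left[ -\sqrt{2}r,\sqrt{2}r\right] $ is the other delicate point.
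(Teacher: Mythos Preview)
Your proposal is correct and follows essentially the same route as the paper: the paper sets $\widetilde{m}(w)=m\bigl(\sqrt{w^{2}-r^{2}}\bigr)$, which is exactly your $M(w)=m(z(w))$ with $w=\sqrt{z^{2}+r^{2}}$, asserts that $\widetilde{m}$ is Herglotz on $\mathbb{C}\setminus[-\sqrt{2}r,\sqrt{2}r]$ with $\widetilde{m}(w)=w+O(w^{-1})$, and then reads off the representation. Your quadrant argument for sign preservation, the harmonicity/minimum-principle upgrade across the imaginary axis, and the bijectivity check are precisely the details the paper leaves implicit.
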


\begin{proof}
Set%
\begin{equation*}
\widetilde{m}\left( z\right) =m\left( \sqrt{z^{2}-r^{2}}\right) \text{.}
\end{equation*}
Then, $\widetilde{m}$ is holomorphic on $\mathbb{C}\backslash\left[ -\sqrt {2%
}r,\sqrt{2}r\right] $ and satisfies%
\begin{equation*}
\dfrac{\func{Im}\widetilde{m}\left( z\right) }{\func{Im}z}>0\text{, \ }%
\widetilde{m}\left( z\right) =\overline{\widetilde{m}\left( \overline{z}%
\right) }\text{, and }\widetilde{m}\left( z\right) =z+O(z^{-1})\text{.}
\end{equation*}
Therefore, there exists a measure $\sigma$ on $\left[ -\sqrt{2}r,\sqrt {2}r%
\right] $ such that%
\begin{equation*}
\widetilde{m}\left( z\right) =z+\int_{-\sqrt{2}r}^{\sqrt{2}r}\dfrac {%
\sigma\left( d\xi\right) }{\xi-z}\text{,}
\end{equation*}
which gives the representation for $m$.\medskip
\end{proof}

\noindent\textrm{Acknowledgement} \ \textit{The author appreciates Professor
F. Nakano for giving him valuable comments}. \textit{This research was
partly supported by JSPS KAKENHI Grant Number 26400128.}

\noindent\emph{OSAKA University \ emeritus professor}\newline
\emph{2-13-2 Sanda, Japan 669-1324}\newline
\emph{e-mail: skotani@outlook.com}

\end{document}